\newcommand{\area}{{\rm Area}}
\newcommand{\vol}{{\rm Vol}}
\newcommand{\DN}{{\mathcal D}}
\newcommand{\Tr}{\operatorname{Tr}}
\newcommand{\oo}{{\mathcal{O}}}
\def\th@alexnormal{%
\let\thm@indent\noindent 
\thm@headfont{\bfseries}
\normalfont
}
\def\th@alexit{%
\let\thm@indent\noindent 
\thm@headfont{\bfseries}
\normalfont
\fontshape{it}
\selectfont
}
\theoremstyle{alexit}
\newtheorem{theorem}[equation]{Theorem}
\newtheorem{proposition}[equation]{Proposition}
\newtheorem{lemma}[equation]{Lemma}
\newtheorem{cor}[equation]{Corollary}
\newtheorem{corollary}[equation]{Corollary}
\theoremstyle{remark}
\newtheorem{remark}[equation]{Remark} 
\theoremstyle{definition}
\newtheorem{example}[equation]{Example}
\numberwithin{equation}{subsection}
\newtheorem{open}{Open Problem}
\begin{document}
\author{Alexandre Girouard}
\address{Département de mathématiques et de statistique, 
Pavillon Alexandre-Vachon, Université Laval,
Québec, QC, G1V 0A6, Canada}
\email{alexandre.girouard@mat.ulaval.ca}
\thanks{Partially supported by FRQNT}
\author{Iosif Polterovich}
\address{D\'epartement de math\'ematiques et de
statistique, Universit\'e de Montr\'eal, C. P. 6128,
Succ. Centre-ville, Montr\'eal, QC,  H3C 3J7,  Canada}
\email{iossif@dms.umontreal.ca}
\thanks{Partially supported by NSERC, FRQNT and Canada Research Chairs program}
\title{Spectral geometry of the Steklov problem}

\keywords{Steklov eigenvalue  problem, Dirichlet-to-Neumann operator, Riemannian manifold}
\subjclass[2010]{ 58J50, 35P15, 35J25}

\begin{abstract}
  The Steklov problem is an eigenvalue problem with the spectral
  parameter in the boundary conditions, which has various
  applications. Its spectrum coincides with that of the
  Dirichlet-to-Neumann operator. Over the past years, there has been a
  growing interest in the Steklov problem from the viewpoint of spectral
  geometry. While this problem shares some common properties with its
  more familiar Dirichlet and Neumann cousins, its eigenvalues and
  eigenfunctions have a number of distinctive geometric features, which
  makes the subject especially appealing. In this survey we discuss some
  recent advances and open questions, particularly in the study of
  spectral asymptotics, spectral invariants, eigenvalue estimates, and
  nodal geometry.
\end{abstract}

\maketitle


\section{Introduction}
\subsection{The Steklov problem}
Let $\Omega$ be a  compact Riemannian manifold of dimension $n\ge
2$ with (possibly non-smooth) boundary $M=\partial\Omega$.
The
\emph{Steklov problem} on $\Omega$ is
\begin{equation}
\label{stek}
\begin{cases}
  \Delta u=0& \mbox{ in } \Omega,\\
  \partial_\nu u=\sigma \,u& \mbox{ on }M.
\end{cases}
\end{equation}
where $\Delta$ is the Laplace-Beltrami operator acting on functions on
$\Omega$, and $\partial_\nu$ is the outward normal derivative along the
boundary $M$. This problem was introduced by the Russian mathematician
V.A. Steklov at the turn of the 20th century (see \cite{KKKNPPS} for a historical discussion).
It is well known that the spectrum of the Steklov problem is discrete as long as the trace
operator $H^1(\Omega)\rightarrow L^2(\partial\Omega)$ is compact (see~\cite{arendtmazzeo}).  In this case, the eigenvalues 
 form a sequence
$0=\sigma_0<\sigma_1\leq\sigma_2\leq\cdots\nearrow\infty$.
This is true under some mild regularity assumptions, for instance if
$\Omega$ has Lipschitz boundary (see~\cite[Theorem 6.2]{necas}).

The present paper focuses on the geometric properties of Steklov eigenvalues and eigenfunctions.
A lot of progress in this area has been made in the last few years,
and some fascinating open problems have emerged.  
We will start by explaining the motivation to study the Steklov spectrum.
In particular, we will emphasize the differences between this eigenvalue problem and its
Dirichlet and Neumann counterparts.


\subsection{Motivation}
The Steklov eigenvalues can be interpreted as the eigenvalues of the
{\it Dirichlet-to-Neumann operator}  $\DN:H^{1/2}(M) \rightarrow
H^{-1/2}(M)$ which maps a function  $f\in H^{1/2}(M)$ to  $\DN
f=\partial_\nu(Hf)$, where $Hf$ is the harmonic
extension of $f$ to $\Omega$. The study of the Dirichlet-to-Neumann
operator (also known as the voltage-to-current map)  is essential for
applications to electrical impedance tomography, which is used in
medical and geophysical imaging (see~\cite{Uhlmann} for a recent
survey).

A rather striking feature of the asymptotic
distribution of Steklov eigenvalues is its unusually (compared to the Dirichlet and Neumann cases) high sensitivity
to the regularity of the boundary.  On one hand, if the boundary of a
domain is smooth, the corresponding Dirichlet-to-Neumann operator is
pseudodifferential and elliptic of order  one (see ~\cite{Taylor}).  As a
result, one can show, for instance, that a surprisingly  sharp 
asymptotic formula for Steklov eigenvalues ~\eqref{formula:GPPSmain} holds for smooth surfaces. 
However, this estimate already
fails for polygons (see section~\ref{sing}). It is in fact
likely that for domains which are not $C^\infty$-smooth but only of
class $C^k$ for some $k \ge 1$, the rate of decay of the remainder
in eigenvalue asymptotics depends on $k$. To our knowledge, for domains with Lipschitz
boundaries, even one-term spectral asymptotics have not yet been
proved. A summary of the available  results is presented
in~\cite{Agranovich} (see also~\cite{AgranovichAmosov}).

One of the oldest topics in spectral geometry is shape
optimization. Here again, the Steklov spectrum holds some
surprises. For instance, the classical result of Faber--Krahn for the
first Dirichlet eigenvalue $\lambda_1(\Omega)$ states that among
Euclidean domains with fixed measure, $\lambda_1$ is minimized by a
ball. Similarly, the Szeg\H{o}--Weinberger inequality states that the first nonzero
Neumann eigenvalue $\mu_1(\Omega)$ is maximized by a ball. In both
cases, no topological assumptions are made. The analogous result
for Steklov eigenvalues is Weinstock's inequality, which states that
among planar domains with fixed perimeter, $\sigma_1$ is maximized by
a disk provided that $\Omega$ is simply--connected. In contrast with
the Dirichlet and Neumann case, this assumption cannot be removed. Indeed
the result fails for appropriate annuli (see
section~\ref{isoperim}).  Moreover, maximization of  the first Steklov eigenvalue among all planar domains of given perimeter is an open problem.  
At the same time, it is known that for simply--connected planar domains,  the $k$-th normalized Steklov eigenvalue is maximized in
the limit by a disjoint union of $k$ identical disks  for
any $k\ge 1$ \cite{gp}. Once again, for  the Dirichlet and Neumann eigenvalues the situation is quite different: the extremal domains for $k\ge 3$ are  
known only at the level of experimental numerics, and, with a few exceptions, are expected to have rather complicated geometries.

Probably the most well--known question in spectral geometry is ``Can one hear the shape of a drum?'', or whether there
exist domains or manifolds that are isospectral but not isometric. 
Apart from some easy examples discussed in section~\ref{isosp}, no
examples of Steklov isospectral non-isometric manifolds are
presently known. Their construction appears to be even trickier than for the
Dirichlet or Neumann problems. In particular, it is not known whether
there exist Steklov isospectral Euclidean domains which are not isometric. Note
that the standard transplantation techniques (see \cite{Berard,
  Buser1986, BCDS}) are not applicable for the Steklov problem, as it
is not clear how to “reflect” Steklov eigenfunctions across the
boundary.

New challenges also arise in the study of the nodal domains and the nodal sets
of Steklov eigenfunctions. One of the problems is to understand
whether the nodal lines of Steklov eigenfunctions are dense at the
``wave-length scale'', which is a basic property of the zeros of
Laplace eigenfunctions. Another interesting question is the nodal
count for the Dirichlet-to-Neumann eigenfunctions.  We touch upon
these topics in section~\ref{nodal}.

Let us conclude this discussion  by mentioning that the Steklov
problem is often considered in the more general form 
\begin{equation}
  \label{stekgen}
  \partial_\nu u = \sigma \rho u,
\end{equation}
where $\rho \in L^{\infty}(\partial \Omega)$ is a non-negative weight function on the boundary. If $\Omega$ is two-dimensional, the Steklov eigenvalues can be thought of 
as the squares of the natural frequencies of a vibrating free membrane with
its mass concentrated along its boundary with density $\rho$ (see~\cite{LambPro}).  A
special case of the Steklov problem with the boundary condition
\eqref{stekgen} is the sloshing problem, which describes the
oscillations of fluid in a container.
In this case, $\rho \equiv 1$ on
the free surface of the fluid and $\rho \equiv 0$ on the walls of the
container. There is an extensive literature on the properties of
sloshing eigenvalues and eigenfunctions,
see~\cite{foxkut,BanKulPoltSiu,KozKuz} and references
therein.

Since the present survey is directed towards geometric questions, in
order to simplify the analysis and presentation we focus  on the pure
Steklov problem with $\rho\equiv 1$.

\subsection{Computational examples}\label{Section:Examples}
The Steklov spectrum can be explicitly computed in  a few cases.
Below we discuss the Steklov
eigenvalues and eigenfunctions of cylinders and balls using
separation of variables.

\begin{example}
  The Steklov eigenvalues of a unit disk are 
  $$0,1,1,2,2,\dots,k,k,\dots.$$
  The corresponding eigenfunctions in polar coordinates $(r, \phi)$ 
  are given by $$1, r\sin \phi, r\cos\phi,\dots, r\sin k\phi, r\cos k\phi,\dots.$$
\end{example} 
\begin{example}
\label{balls}
  The Steklov eigenspaces on the ball $B(0,R)\subset\mathbb{R}^n$ are
  the restrictions of the spaces $H_k^n$ of homogeneous harmonic
  polynomials of degree $l\in\mathbb{N}$ on $\mathbb{R}^n$. The
  corresponding eigenvalue is $\sigma=k/R$ with multiplicity 
  $$\dim H_k^n={n+k-1\choose n-1}-{n+k-3\choose n-1}.$$
  This is of course a generalization of the previous example.
\end{example}
\begin{example}\label{cyl}
  This example is taken from~\cite{ceg2}. Let $\Sigma$ be a
  compact  Riemannian manifold without
  boundary. Let 
  $$0=\lambda_1<\lambda_2\leq\lambda_3\cdots\nearrow\infty$$ 
  be the spectrum of the Laplace-Beltrami operator $\Delta_\Sigma$ on
  $\Sigma$, and let $(u_k)$ be an orthonormal basis of $L^2(\Sigma)$
  such that
  $$\Delta_{\Sigma} u_k=\lambda_ku_k.$$
  Given any $L>0$, consider the cylinder $\Omega=[-L,L]\times\Sigma \subset \mathbb{R}\times\Sigma$.
  Its Steklov spectrum is given by 
  \begin{gather*}
    0, 1/L,\ 
    \sqrt{\lambda_k}\tanh (\sqrt{\lambda_k}L)<
    \sqrt{\lambda_k}\coth (\sqrt{\lambda_k}L).
  \end{gather*}
  and the corresponding eigenfunctions are
  \begin{gather*}
    1,\ t,\
    \cosh(\sqrt{\lambda_k}t)f_k(x),\
    \sinh(\sqrt{\lambda_k}t)f_k(x).
  \end{gather*}
\end{example}

In sections \ref{section:spectrumsquare} and \ref{isoperim}  we will discuss two more
computational examples: the  Steklov eigenvalues of a
square and of annuli.

\subsection{Plan of the paper}
The paper is organized as follows. 
In section~\ref{Section:Asymptotics} we survey results on the asymptotics and invariants of the Steklov
spectrum on smooth Riemannian manifolds.  In section \ref{sing} we discuss asymptotics of Steklov eigenvalues on polygons, which turns out to be quite different from the case of smooth planar domains.
Section~\ref{section:geometricinequalities} is concerned
with geometric inequalities.
In section~\ref{section:IsospectralityRigidity} we discuss Steklov isospectrality and
spectral rigidity.  Finally,  section~\ref{section:NodalMultiplicity} deals with the nodal geometry of
Steklov eigenfunctions and the multiplicity bounds for Steklov
eigenvalues.  

\section{Asymptotics and  invariants of the Steklov
  spectrum}\label{Section:Asymptotics}
\subsection{Eigenvalue asymptotics}
As above, let $n\geq 2$ be the dimension of the manifold $\Omega$, so that the dimension of the
boundary $M=\partial\Omega$ is $n-1$.
As was mentioned in the introduction, the Steklov eigenvalues of a compact manifold $\Omega$ with boundary
$M=\partial\Omega$ are the eigenvalues of the
Dirichlet-to-Neumann map.  It  is a first order elliptic pseudodifferential operator
which has the same principal symbol as the square root of the
Laplace-Beltrami operator on $M$. Therefore, applying standard
results of H\"ormander~\cite{hormander}  we obtain the following
Weyl's law for Steklov eigenvalues: 
$$
\#(\sigma_j < \sigma)=\frac{\vol(\mathbb{B}^{n-1})\,\vol (M)}{(2\pi)^{n-1}} \sigma^{n-1} + \oo(\sigma^{n-2}),
$$
where $\mathbb{B}^{n-1}$ is a unit ball in $\mathbb{R}^{n-1}$.
This formula can be rewritten
\begin{equation}
\label{Weylaw}
\sigma_j= 2\pi \left(\frac{j}{\vol(\mathbb{B}^{n-1})\, \vol(M)}\right)^\frac{1}{n-1} + \oo(1).
\end{equation}
In two dimensions, a much more precise asymptotic formula was proved in~\cite{GPPS}.  Given a finite sequence $C=\{\alpha_1,\cdots,\alpha_k\}$ of positive numbers,
consider the following union of multisets (i.e. sets with multiplicities):
$\{0,..\dots,0\}\cup \alpha_1 \mathbb{N}\cup \alpha_1 \mathbb{N}\cup
\alpha_2\mathbb N\cup\alpha_2\mathbb N\cup \dots
\cup\alpha_k\mathbb{N}\cup \alpha_k\mathbb{N}$, where the first multiset
contains $k$ zeros and $\alpha\mathbb{N}=\{\alpha,2\alpha,3\alpha,\dots,n\alpha,\dots\}$.
We rearrange the elements of this multiset into a monotone increasing sequence $S(C)$.
For example,
$S(\{1\})=\{0,1,1,2,2,3,3,\cdots\}$ and $S(\{1, \pi\})=\{0,0,1,1,2,2,3,3,\pi,\pi,4,4,5,5,6,6,2\pi,2\pi,7,7,\cdots\}$.
The following sharp spectral estimate was proved in~\cite{GPPS}.
\begin{theorem}
\label{main:GPPS}
  Let $\Omega$ be a smooth compact Riemannian surface with boundary $M$.
  Let $M_1,\cdots,M_k$ be the connected components of the
  boundary $M=\partial\Omega$, with lengths $\ell(M_i), 1\leq i\leq
  k$. 
  Set $R=\left\{\frac{2\pi}{\ell(M_1)},\cdots,\frac{2\pi}{\ell(M_k)}\right\}$. Then
  \begin{gather}\label{formula:GPPSmain}
    \sigma_{j}=S(R)_j+\oo(j^{-\infty}),
  \end{gather}
where $\mathcal O(j^{-\infty})$ means that the error term decays
faster than any negative power of~$j$.
\end{theorem}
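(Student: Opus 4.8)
The plan is to show that, up to a spectrally negligible error, the Dirichlet-to-Neumann operator $\DN$ splits as a direct sum over the boundary circles $M_i$ of copies of $\sqrt{-\Delta_{M_i}}$, and then to read off the spectrum. The guiding model computation is that on a circle of length $\ell$ parametrized by arc length $s$, the operator $|D_s|=\sqrt{-d^2/ds^2}$ has eigenfunctions $e^{2\pi i m s/\ell}$ and eigenvalues $(2\pi/\ell)|m|$, $m\in\mathbb Z$; counted with multiplicity these form exactly the multiset $\{0\}\cup\alpha\mathbb N\cup\alpha\mathbb N$ with $\alpha=2\pi/\ell$, which is precisely the contribution of $M_i$ to $S(R)$. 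Thus the theorem reduces to two assertions: that $\DN$ agrees with $\bigoplus_i\sqrt{-\Delta_{M_i}}$ up to a smoothing operator, and that such a perturbation displaces the eigenvalues by only $\oo(j^{-\infty})$.

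First I would decouple the boundary components. As recalled above, $\DN$ is a first-order elliptic pseudodifferential operator on $M=\partial\Omega$, so its Schwartz kernel is smooth away from the diagonal. Since distinct components $M_i$ and $M_j$ lie at positive distance from one another, the off-diagonal blocks of $\DN$ relative to the splitting $L^2(M)=\bigoplus_i L^2(M_i)$ have smooth kernels and are therefore smoothing. Hence $\DN=\bigoplus_i\DN_i+K_0$ with $K_0$ smoothing, each $\DN_i$ being a first-order elliptic self-adjoint operator on the circle $M_i$ with principal symbol $|\xi|$.

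The crux is to prove that on each circle $\DN_i=\sqrt{-\Delta_{M_i}}=|D_s|$ modulo a smoothing operator. To this end I would work in boundary normal coordinates $(s,t)$ on a collar of $M_i$, in which the metric is $dt^2+a(s,t)^2\,ds^2$ with $a(s,0)\equiv 1$ (arc length), factor the Laplacian, and construct a parametrix for the harmonic extension by seeking a $t$-dependent family $B(s,t,D_s)$ with $\partial_t u=-Bu$ along harmonic $u$, so that $\DN_i=B|_{t=0}$ up to smoothing. Expanding $B$ into homogeneous symbols and solving the resulting recursion, the principal symbol is $|\xi|$, and the subprincipal symbol vanishes at $t=0$: because $a(s,0)\equiv 1$ one has $\partial_s a(s,0)=0$, while the two remaining terms involving $\partial_t a(s,0)$ cancel. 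The essential two-dimensional point --- and the step I expect to be the main obstacle --- is that \emph{every} lower-order term of the symbol of $B$ likewise vanishes at $t=0$, forcing $\DN_i-|D_s|$ to be smoothing. This all-orders vanishing reflects the fact that a boundary curve is intrinsically flat; in higher dimensions the curvature of $M$ enters the subprincipal symbol and no such statement can hold. That the vanishing is genuinely necessary can be seen a posteriori: a surviving zeroth-order term would shift the eigenvalues near level $m$ by the average of its symbol, an $\oo(1)$ amount incompatible with \eqref{formula:GPPSmain}.

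It remains to justify that adding the total smoothing remainder $K=K_0+\bigoplus_i(\DN_i-|D_s|)$ perturbs eigenvalues by only $\oo(j^{-\infty})$. Writing $A=\bigoplus_i|D_s|$, I would first treat each circle separately: there the positive eigenvalues come in pairs $\{m,-m\}$ separated by gaps $2\pi/\ell_i$ bounded below, so comparing the Riesz spectral projections of $|D_s|$ and $\DN_i$ over small contours around $(2\pi/\ell_i)m$, and using that a smoothing operator has Fourier matrix entries decaying faster than any power of the frequency, shows that both the within-pair splitting and the displacement of each pair are $\oo(m^{-\infty})$. The off-diagonal coupling $K_0$ is handled in the same way, since its matrix elements between high-frequency modes on different circles again decay faster than any power; thus the ordered eigenvalue sequences of $A$ and $A+K$ differ by $\oo(j^{-\infty})$. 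Assembling the per-circle spectra and rearranging into increasing order yields $\sigma_j=S(R)_j+\oo(j^{-\infty})$.
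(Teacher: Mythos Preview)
Your outline is correct and would give the theorem, but it takes a different route from the paper. Rather than proving directly that the full symbol of $\DN$ on each boundary circle equals $|\xi|$ to all orders---the computation you rightly flag as the crux---the paper reduces to the simply-connected case by surgery: for each $M_i$ one excises a collar and caps it smoothly to obtain a simply-connected surface $\Omega_i^*$ whose metric agrees with that of $\Omega$ near $M_i$, and observes that since the Lee--Uhlmann symbol of $\DN$ is determined \emph{locally} by the metric in the collar, the Dirichlet-to-Neumann operators of $\Omega$ and of $\Omega^*=\bigsqcup_i\Omega_i^*$ share the same full symbol and hence differ by a smoothing operator (this also absorbs your decoupling step). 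Each $\Omega_i^*$ is then mapped conformally to a disk, and the simply-connected asymptotics \eqref{refined} of Rozenblyum and Guillemin--Melrose are invoked. Your approach is more self-contained but commits you to actually carrying out the full symbolic recursion; the paper's route trades that computation for the softer locality statement plus a black-box appeal to the known simply-connected result. One small caution on your final step: the Riesz-projection argument relies on uniform spectral gaps, which you have on each circle separately but lose once circles of incommensurate lengths are combined, so for the off-diagonal coupling by $K_0$ it is cleaner to argue via the fact that a smoothing perturbation lies in every Schatten class together with min--max, rather than via contour integration.
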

In particular, for simply--connected surfaces we recover the following
result proved earlier by Rozenblyum and Guillemin--Melrose
(see~\cite{rozenbljumEnglish,edward}): 
\begin{equation}
\label{refined}
\sigma_{2j}=\sigma_{2j-1} + \oo(j^{-\infty})=\frac{2\pi}{\ell(M)}j+\oo(j^{-\infty}) .
\end{equation}
The idea of the proof of Theorem \ref{main:GPPS} is as follows.  For each boundary component $M_i$, $i=1,\dots, k$,  we cut off a  ``collar'' neighbourhood of the boundary and smoothly  glue a cap onto it. In this way,  one obtains $k$ simply--connected surfaces, whose boundaries are precisely $M_1, \dots, M_k$, and the Riemannian metric in the neighbourhood of each $M_i$, $i=1,\dots k$,  coincides with the metric on $\Omega$. Denote by $\Omega^*$ the union of these simply--connected surfaces.
Using an explicit formula for the full symbol of the
Dirichlet-to-Neumann operator \cite{LeeUhlmann} we notice that the
Dirichlet-to-Neumann operators associated with $\Omega$ and $\Omega^*$
differ by a smoothing operator, that is, by a pseudodifferential operator with a smooth integral kernel; such operators are bounded as maps between any two Sobolev spaces $H^s(M)$ and $H^t(M)$, $s,t \in \mathbb{R}$.
Applying pseudodifferential techniques,  we deduce that the
corresponding Steklov eigenvalues of $\sigma_j(\Omega)$ and
$\sigma_j(\Omega^*)$ differ by $\oo(j^{-\infty})$. Note that a similar
idea was used in \cite{HislopLutzer}. Now, in order to study the
asymptotics of the Steklov spectrum of $\Omega^*$,  we map each of its
connected components to a disk by a conformal transformation and apply
the approach  of Rozenblyum-Guillemin-Melrose which is also based on
pseudodifferential calculus. 
\subsection{Spectral invariants} 
\label{specinv}
The following result is an immediate corollary of Weyl's law~(\ref{Weylaw}).
\begin{corollary}
\label{cor1}
  The Steklov spectrum determines the dimension of the manifold and
  the volume of its boundary.
\end{corollary}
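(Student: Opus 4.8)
The plan is to read off both invariants directly from Weyl's law~\eqref{Weylaw}, which expresses the $j$-th eigenvalue purely in terms of the dimension $n$ and the boundary volume $\vol(M)$. The key observation is that the Steklov spectrum $\{\sigma_j\}$ is, by hypothesis, a \emph{known} quantity: being given the spectrum means we know the value of $\sigma_j$ for every $j$, and in particular we know its asymptotic behaviour as $j\to\infty$. So the strategy is to extract the leading-order growth rate of $\sigma_j$ and match it against the right-hand side of~\eqref{Weylaw}.

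First I would isolate the leading term of the asymptotics. From~\eqref{Weylaw} we have
\begin{equation*}
\sigma_j = 2\pi\left(\frac{j}{\vol(\mathbb{B}^{n-1})\,\vol(M)}\right)^{\frac{1}{n-1}} + \oo(1),
\end{equation*}
so that $\sigma_j \sim C\, j^{1/(n-1)}$ as $j\to\infty$, where the constant is
\begin{equation*}
C = 2\pi\left(\vol(\mathbb{B}^{n-1})\,\vol(M)\right)^{-\frac{1}{n-1}}.
\end{equation*}
The exponent of $j$ in this growth rate is $1/(n-1)$, and it is determined by the spectrum alone. Since the map $n\mapsto 1/(n-1)$ is injective for integers $n\ge 2$, knowing the exponent recovers $n$, which is precisely the dimension of the manifold $\Omega$. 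This gives the first assertion.

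Next I would recover the boundary volume. Having determined $n$ from the exponent, the constant $C$ is also determined by the spectrum, for instance as the limit $C=\lim_{j\to\infty}\sigma_j\, j^{-1/(n-1)}$. Since $\vol(\mathbb{B}^{n-1})$ is a universal constant depending only on the already-known dimension, the equation defining $C$ can be solved uniquely for $\vol(M)$, giving
\begin{equation*}
\vol(M)=\frac{(2\pi)^{n-1}\,\vol(\mathbb{B}^{n-1})^{-1}}{C^{\,n-1}}.
\end{equation*}
This yields the second assertion. There is no serious obstacle here: the entire content is packaged in Weyl's law, and the corollary is genuinely immediate. The only point requiring a word of care is that the \emph{order} of $\sigma_j$, not merely its coefficient, encodes the dimension, so one must extract the exponent $1/(n-1)$ first and only then solve for $\vol(M)$; conflating these steps would obscure why two independent invariants fall out of a single asymptotic formula.
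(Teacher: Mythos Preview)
Your proposal is correct and matches the paper's approach: the paper simply states that the result is an immediate corollary of Weyl's law~\eqref{Weylaw}, and you have spelled out exactly how the exponent $1/(n-1)$ recovers the dimension and the leading constant then recovers $\vol(M)$.
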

More refined information can be extracted from the Steklov spectrum of surfaces. 
\begin{theorem}\label{xx}
  The Steklov spectrum determines the number $k$ and the
    lengths $\ell_1\geq \ell_2\geq\cdots\geq\ell_k$ of
    the boundary components of a smooth compact Riemannian
  surface. Moreover, if  $\{\sigma_j\}$ is  the monotone increasing
  sequence of Steklov eigenvalues, then:
  
  $$\ell_1=\frac{2\pi}{\limsup_{j\rightarrow\infty}(\sigma_{j+1}-\sigma_j)}.$$
\end{theorem}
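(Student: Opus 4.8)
The plan is to reduce everything to the explicit limiting sequence $S(R)$ furnished by Theorem \ref{main:GPPS} and then to read off the boundary data from it. Write $\alpha_i=2\pi/\ell(M_i)$, so that $R=\{\alpha_1,\dots,\alpha_k\}$ and, since $\ell_1\ge\dots\ge\ell_k$, the frequencies satisfy $\alpha_1\le\dots\le\alpha_k$; thus the \emph{largest} boundary length corresponds to the \emph{smallest} frequency $\alpha_1$. By Theorem \ref{main:GPPS} we have $\sigma_j=S(R)_j+\oo(j^{-\infty})$, so every asymptotic feature of $\{\sigma_j\}$ that is insensitive to super-polynomially small perturbations is already an invariant of the multiset $R$. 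The proof then splits into two tasks: (A) establish the displayed formula for $\ell_1$, and (B) recover the entire multiset $R$, hence all the lengths and the number $k=|R|$.

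For (A), I would compute $\limsup_j(\sigma_{j+1}-\sigma_j)=\limsup_j(S(R)_{j+1}-S(R)_j)$, the first equality being immediate from $\sigma_j=S(R)_j+\oo(j^{-\infty})$. For the upper bound, note that the single progression $\alpha_1\mathbb{N}\subset S(R)$ already guarantees that after any point of $S(R)$ the next larger multiple of $\alpha_1$ occurs within distance $\alpha_1$; hence consecutive \emph{distinct} values of $S(R)$ differ by at most $\alpha_1$, while repeated values contribute gaps $0$, giving $\limsup\le\alpha_1$. For the matching lower bound I would exhibit gaps arbitrarily close to $\alpha_1$ infinitely often: the element of $S(R)$ immediately following $x=m\alpha_1$ sits at distance $\min\bigl(\alpha_1,\ \min_{i\ge 2}(\alpha_i-(m\alpha_1\bmod\alpha_i))\bigr)$, which exceeds $\alpha_1-\varepsilon$ as soon as the fractional parts $\{m\alpha_1/\alpha_i\}$ are small for every $i\ge 2$. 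The existence of infinitely many such $m$ is exactly simultaneous Diophantine approximation, equivalently recurrence of the orbit $m\mapsto(m\alpha_1/\alpha_2,\dots,m\alpha_1/\alpha_k)$ near the origin of the torus. Letting $\varepsilon\to 0$ yields $\limsup=\alpha_1=2\pi/\ell_1$, which is the claimed formula.

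For (B), I would show that the sequence $S(R)$ determines the multiset $R$. Consider the counting function $N(t)=\#\{j:\sigma_j\le t\}$ and subtract the Weyl term (Corollary \ref{cor1}, formula \eqref{Weylaw}) to form a bounded remainder $g(t)=N(t)-\frac{\vol(M)}{\pi}t$. Up to an additive constant, the $\oo(j^{-\infty})$ displacements of the eigenvalues, and the finitely many low-lying eigenvalues — none of which affects mean (Bohr) Fourier coefficients at nonzero frequencies — the function $g$ coincides with the almost periodic function $-2\sum_{i=1}^k\{t/\alpha_i\}$, whose Bohr spectrum is $\{n/\alpha_i:\ n\ge 1,\ 1\le i\le k\}$. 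The smallest positive frequencies appearing are precisely the fundamentals $1/\alpha_i=\ell(M_i)/(2\pi)$, so reading them off, with multiplicities, recovers the multiset $\{\ell(M_i)\}$ and $k=|R|$. Equivalently and more elementarily, one may peel off arithmetic progressions from the multiset of jump locations $\bigcup_i\alpha_i\mathbb{N}$ by a greedy sieve starting from its smallest element $\alpha_1$.

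The reduction in the first paragraph is granted by Theorem \ref{main:GPPS}, and the upper bound together with the Weyl consequences are routine. I expect the genuine difficulties to be the two equidistribution inputs: the lower bound in (A), where one must produce gaps close to $\alpha_1$ through simultaneous approximation, and the robustness claim in (B), where one must verify that the super-polynomially small errors and the finitely many small eigenvalues do not perturb the almost periodic spectrum, equivalently do not corrupt the sieve, that encodes the individual frequencies $\alpha_i$.
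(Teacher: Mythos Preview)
Your plan matches the paper's own route: the survey itself gives no detailed proof but states that the result follows from Theorem~\ref{main:GPPS} combined with ``number-theoretic arguments involving the Dirichlet theorem on simultaneous approximation'', which is precisely your reduction to $S(R)$ and your use of simultaneous approximation for the gap analysis.

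One technical point in your lower bound for (A) deserves care. Torus recurrence (equivalently, Dirichlet simultaneous approximation) only gives infinitely many $m$ with $\lVert m\alpha_1/\alpha_i\rVert$ small, not the one-sided condition $\{m\alpha_1/\alpha_i\}\in[0,\delta)$ that your displayed distance formula requires; the one-sided condition can genuinely fail for all large $m$ (take $\alpha_1/\alpha_2=\tfrac12+\epsilon$ and $\alpha_1/\alpha_3=\tfrac12-\epsilon$ with $\epsilon$ irrational). The easy fix is to note that when $\lVert m\alpha_1/\alpha_i\rVert<\delta$ for every $i\ge 2$, each progression $\alpha_i\mathbb{N}$ contributes a point $n_i\alpha_i$ within $\delta\alpha_k$ of $m\alpha_1$; all elements of $S(R)$ in the window $(m\alpha_1-\alpha_1+\delta\alpha_k,\,m\alpha_1+\alpha_1-\delta\alpha_k)$ then lie in the small cluster $(m\alpha_1-\delta\alpha_k,\,m\alpha_1+\delta\alpha_k)$, and the gap immediately after the \emph{largest} element of this cluster is at least $\alpha_1-2\delta\alpha_k$. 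With this adjustment your argument for (A) goes through, and your outline for (B) via the Bohr spectrum of the counting remainder is a sound way to recover the full multiset $R$.
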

This result is proved in~\cite{GPPS} by a combination of
Theorem~\ref{main:GPPS} and certain number-theoretic arguments
involving the Dirichlet theorem on simultaneous approximation of
irrational numbers.

As was shown in~\cite{GPPS}, a direct generalization of
  Theorem \ref{xx} to higher dimensions is false.  Indeed, consider
  four flat rectangular tori: $T_{1,1}=\mathbb{R}^2/\mathbb{Z}^2$,
  $T_{2,1} =\mathbb{R}/2\mathbb{Z}\times\mathbb{R}/\mathbb{Z}$, 
  $T_{2,2}~=~\mathbb{R}^2/(2\mathbb{Z})^2$ and
  $T_{\sqrt{2},\sqrt{2}}=\mathbb{R}^2/(\sqrt{2}\mathbb{Z})^2$. It was shown in~\cite{DoyleRossetti,Parzanchevski} 
  that the disjoint union $\mathcal{T}=T_{1,1}\sqcup T_{1,1}\sqcup T_{2,2}$ is Laplace--Beltrami isospectral to the disjoint union $\mathcal{T}'=T_{2,1}\sqcup T_{2,1}\sqcup 
  T_{\sqrt{2},\sqrt{2}}$.  It follows from Example~\ref{cyl} that for any
  $L>0$, the two disjoint unions of cylinders $\Omega_1=[0,L]\times
  \mathcal{T}$  and $\Omega_2=[0,L] \times \mathcal{T}'$ are  Steklov
  isospectral. At the same time, 
  $\Omega_1$ has four boundary components of area $1$ and two  boundary components of area $4$, while $\Omega_2$ has  six 
  boundary components of  area $2$.  Therefore, the collection of
  areas of boundary components cannot be determined from the Steklov
  spectrum.
Still, the following question can be asked:
\begin{open}
  Is the number of boundary components  of a manifold of dimension $\ge 3$ a Steklov spectral invariant?
\end{open}

Further spectral invariants can be deduced using the heat trace of the
Dirichlet-to-Neumann operator $\DN$. 
By the results of~\cite{DuistermaatGuillemin,Agranovich2,GrubbSeeley},
the heat trace admits an asymptotic expansion
\begin{equation}
\label{heatexp}
\sum_{i=0}^{\infty}e^{-t\sigma_i}=\Tr  e^{-t\DN}=\int_M e^{-t\DN}(x,x)\ dx\sim\sum_{k=0}^{\infty}a_kt^{-n+1+k}+\sum_{l=1}^{\infty}b_lt^l\log t.
\end{equation}
The coefficients $a_k$ and $b_l$ are called the \emph{Steklov heat
invariants}, and it follows from (\ref{heatexp}) that they are
determined by the Steklov spectrum. The invariants
$a_0,\ldots,a_{n-1}$, as well as $b_l$ for all $l$, are local, in the
sense that they are integrals over $M$ of corresponding functions
$a_k(x)$ and $b_l(x)$ which may be computed directly from the symbol
of the Dirichlet-to-Neumann operator $\DN$.  The coefficients $a_k$
are not local for $k\geq n$~\cite{Gilkey,GilkeyGrubb} and hence are
significantly more difficult to study.

In~\cite{PoltSher},  explicit  expressions for the Steklov heat invariants
$a_0$, $a_1$ and $a_2$  for manifolds of dimensions three or
  higher were given in terms of the scalar curvatures of $M$ and
$\Omega$, as well as the mean curvature and the second order mean
curvature of $M$ (for further results in this direction, see~\cite{liu}).
For example, the formula for $a_1$  yields the
following corollary: 
\begin{corollary}
\label{cor2}
  Let $\dim \Omega \ge 3$. Then the  integral of the mean curvature
  over $\partial \Omega=M$ (i.e. the {\it total mean curvature} of
  $M$) is an invariant of the Steklov spectrum.
\end{corollary}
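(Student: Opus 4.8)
The plan is to combine two ingredients: that every coefficient in the heat-trace expansion \eqref{heatexp} is a spectral invariant, and that the coefficient $a_1$ is, up to an explicit constant, the total mean curvature. First I would observe that the Steklov spectrum determines the smooth function $t\mapsto\sum_i e^{-t\sigma_i}=\Tr e^{-t\DN}$ for $t>0$, and hence determines each coefficient $a_k$ and $b_l$ in its asymptotic expansion as $t\to 0^+$; in particular $a_1$ is a spectral invariant. Since $\dim\Omega\ge 3$, the index $1$ lies in the range $0,\dots,n-1$, so $a_1$ is one of the \emph{local} invariants, given by $\int_M a_1(x)\,dx$ for a density built universally from the symbol of $\DN$. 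By Corollary~\ref{cor1} the dimension $n$ is also known from the spectrum.

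Next I would pin down the precise form of $a_1$ by a scaling argument. Under a homothety $g\mapsto\lambda^2 g$ the operator $\DN$ has order one, so $\sigma_j\mapsto\lambda^{-1}\sigma_j$, and therefore the heat coefficients transform as $a_k\mapsto\lambda^{(n-1)-k}a_k$; in particular $a_1\mapsto\lambda^{n-2}a_1$. On the $(n-1)$-dimensional boundary $M$, the only pointwise Riemannian invariant of homogeneity $\lambda^{-1}$ linear in the second fundamental form is the mean curvature $H$, while the intrinsic scalar curvature of $M$ and the second-order mean curvature scale like $\lambda^{-2}$ and can therefore only enter at the level of $a_2$. This forces
\[
a_1=c_n\int_M H\,dV_M
\]
for a universal constant $c_n$ depending only on $n$, which is exactly the shape of the formula established in~\cite{PoltSher}.

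Finally I would read off the conclusion: $n$ is spectrally known, $a_1$ is spectrally determined, and provided $c_n\ne 0$ the total mean curvature is recovered as $\int_M H\,dV_M=a_1/c_n$, proving the claim. The main obstacle is the explicit symbol computation underlying~\cite{PoltSher}: one must expand the full symbol of the Dirichlet-to-Neumann operator following~\cite{LeeUhlmann} to sufficient order, integrate over $M$ to extract the density $a_1(x)$, and—crucially—verify that the resulting constant $c_n$ is nonzero. The scaling heuristic fixes the \emph{form} of $a_1$ but says nothing about the value of $c_n$; only a genuine symbol calculation rules out the degenerate possibility $c_n=0$, in which case the total mean curvature would be invisible at this order and one would have to descend to $a_2$.
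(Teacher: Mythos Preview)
Your proposal is correct and follows the paper's approach: both rest on the fact that $a_1$ is a Steklov heat invariant and on the explicit formula from \cite{PoltSher} giving $a_1=c_n\int_M H$ with $c_n\neq 0$; your scaling/invariance-theory heuristic is a sound way to anticipate the \emph{form} of $a_1$, but, as you acknowledge, the decisive input $c_n\neq 0$ still comes from the symbol computation in \cite{PoltSher}. One minor correction: locality of $a_1$ already holds for $n\ge 2$ (since $1\le n-1$ there too), so the hypothesis $\dim\Omega\ge 3$ is not what guarantees locality but rather reflects the range in which the formula of \cite{PoltSher} is stated.
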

The Steklov heat invariants will be revisited in section~\ref{isosp}.
\section{Spectral asymptotics on polygons}
\label{sing}
The spectral asymptotics given by formula  \eqref{Weylaw} and Theorem \ref{main:GPPS} are obtained using pseudodifferential techniques which are valid
only for manifolds with smooth boundaries. In the presence of singularities, the study of the asymptotic distribution of Steklov eigenvalues is more difficult,
and the known remainder estimates are significantly weaker
(see~\cite{Agranovich} and references therein). Moreover, Theorem
\ref{main:GPPS} fails even for planar domains with corners.  This can
be seen from the explicit computation of the spectrum for the
simplest nonsmooth domain: the  square.
\begin{table}\label{table:eigenvalues}
\begin{tabular}{llll}
  Eigenspace basis&Conditions on $\alpha$&
  Eigenvalues&Asymptotic    behaviour\\[.2cm]
  $\cos(\alpha x)\cosh(\alpha y)$
  &&\\
  $\cos(\alpha y)\cosh(\alpha x)$
  &$\tan(\alpha )=-\tanh(\alpha)$
  &$\alpha\tanh(\alpha)$&$\frac{3\pi}{4}+\pi j+\oo(j^{-\infty})$\\[.2cm]
  \hline
  $\sin(\alpha x)\cosh(\alpha y)$
  &\\
  $\sin(\alpha y)\cosh(\alpha x)$
  &$\tan(\alpha )=\coth(\alpha)$
  &  $\alpha\tanh(\alpha)$&$\frac{\pi}{4}+\pi j+\oo(j^{-\infty})$\\[.2cm]
  \hline
  $\cos(\alpha x)\sinh(\alpha y)$
  &\\
  $\cos(\alpha y)\sinh(\alpha x)$
  &$\tan(\alpha )=-\coth(\alpha)$&$\alpha\coth(\alpha)$&$\frac{3\pi}{4}+\pi j+\oo(j^{-\infty})$\\[.2cm]
  \hline
  $\sin(\alpha x)\sinh(\alpha y)$
  &\\
  $\sin(\alpha y)\sinh(\alpha x)$
  &$\tan(\alpha)=\tanh(\alpha)$&$\alpha\coth(\alpha)$&$\frac{\pi}{4}+\pi j+\oo(j^{-\infty})$\\[.2cm]
  \hline
  $xy$&&1
\end{tabular}

\bigskip

\caption{Eigenfunctions obtained by separation of variables on the
  square $(-1,1)\times (-1,1)$.}
\label{SquareSepVar} 
\end{table}

\subsection{Spectral asymptotics on the square}
\label{section:spectrumsquare}
  The Steklov spectrum of the square $\Omega=(-1,1)\times (-1,1)$ is
  described as follows.
  For each positive root $\alpha$ of the following  equations:
  \begin{align*}
    \tan(\alpha )+\tanh(\alpha )=0,\quad 
    \tan(\alpha )-\coth(\alpha )=0,\\
    \tan(\alpha )+\coth(\alpha )=0,\quad
    \tan(\alpha)-\tanh(\alpha)=0
  \end{align*}
  the number $|\alpha\tan(\alpha)|$ is a Steklov eigenvalue  of multiplicity two (see
  Table~\ref{table:eigenvalues} and Figure~\ref{figure:square}). 
  The function $f(x,y)=xy$ is also an eigenfunction, with a simple
  eigenvalue $\sigma_3=1$. Starting from $\sigma_4$,  the normalized eigenvalues
  are clustered in groups of $4$ around the odd multiples of $2\pi$:
  \begin{gather*}
    \sigma_{4j+l}L=(2j+1)2\pi+\oo(j^{-\infty}),\qquad\mbox{ for }l=0,1,2,3.
  \end{gather*}
  This is compatible with Weyl's law since for $k=4j+l$ it follows that
  $$\sigma_kL=\left(\frac{k-l}{2}+1\right)2\pi+\oo(j^{-\infty})=\pi k+\oo(1).$$
  Nevertheless, the refined asymptotics~(\ref{refined}) does not
  hold.

Let us discuss the spectrum of a square in more detail.
Separation of variables  quickly leads to the 8 families of Steklov eigenfunctions
presented in Table~\ref{SquareSepVar} plus an ``exceptional'' eigenfunction $f(x,y)=xy$.
\begin{figure}
  \centering
  \includegraphics[width=10cm]{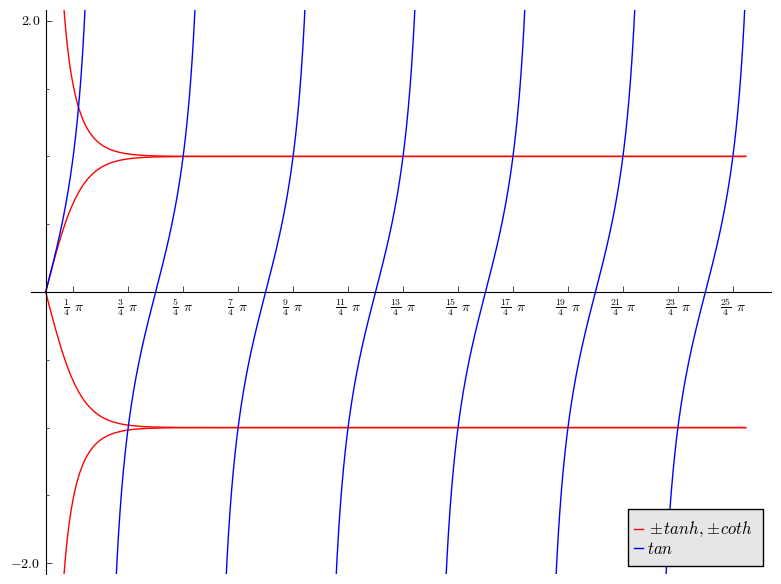}
  \caption{The Steklov eigenvalues of a square. Each intersection
    corresponds to a double eigenvalue.}
\label{figure:square}
\end{figure}
\begin{figure}[h]
  \centering
  \includegraphics[width=7cm]{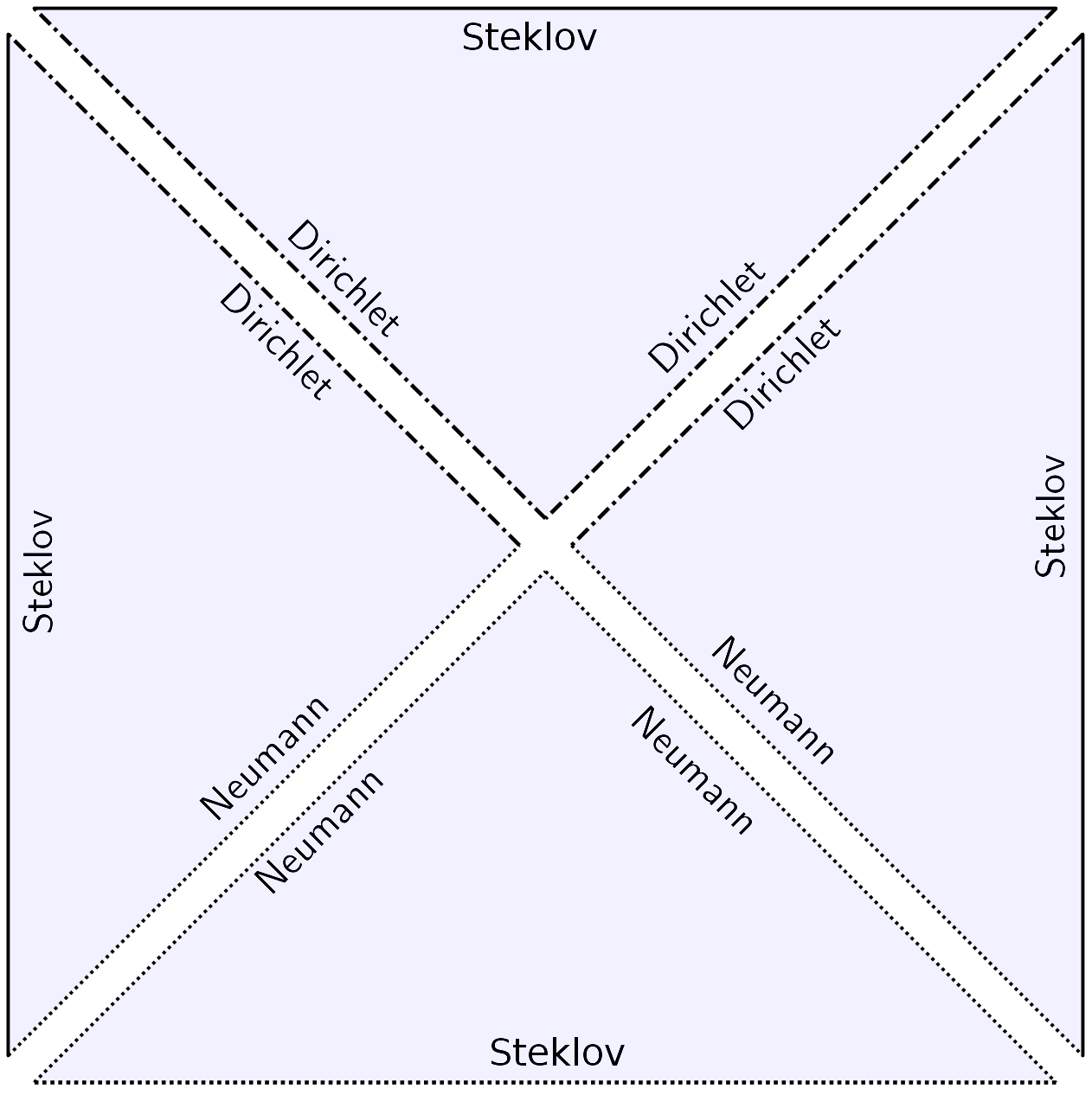}
  \caption{Decomposition of the Steklov problem on a square into four mixed
    problems on a triangle.}
\label{figure:squaresymmetries}
\end{figure}
One now needs  to prove the  completeness of this
system of orthogonal functions in $L^2(\partial\Omega)$. Using the
diagonal symmetries of the square (see Figure~\ref{figure:squaresymmetries}), we obtain symmetrized functions spanning the
same eigenspaces.  Splitting the eigenfunctions into odd and even with
respect to the diagonal symmetries, we represent the spectrum as the
union of the spectra of four mixed Steklov problems on a right
isosceles triangle. In each of these problems the Steklov condition is
imposed on the hypotenuse, and on each of the sides the condition is
either Dirichlet or Neumann, depending on whether the corresponding
eigenfunctions are odd or even when reflected across this side.  In
order to prove the  completeness of this system of Steklov
eigenfunctions, it is sufficient to show that the corresponding
symmetrized eigenfunctions form a complete set of solutions for each
of the four mixed problems.

Let us show this property for the problem corresponding to even symmetries
across the diagonal.  In this way, one gets a sloshing (mixed
Steklov--Neumann) problem on a right isosceles triangle. Solutions of
this problem were known since 1840s (see \cite{Lamb}). The
restrictions of the solutions to the hypotenuse (i.e. to the side of
the original square) turn out to be  the eigenfunctions of the
\emph{free beam equation}: 
\begin{align*}
  \frac{d^4}{dx^4}f=\omega^4f\qquad&\mbox{ on }(-1,1)\\
  \frac{d^3}{dx^3}f=\frac{d^2}{dx^2}f=0\quad&\mbox{ at }x=-1,1.
\end{align*}
This is a fourth order self-adjoint Sturm-Liouvillle equation. It is
known that its solutions form a complete set of functions on the interval
$(-1,1)$.  

The remaining three mixed problems are dealt with  similarly: one
reduces the problem to the study of solutions of the vibrating beam
equation with either the Dirichlet condition on both ends, or
the Dirichlet condition on one end and the Neumann on the other.

\subsection{Numerical experiments} Understanding fine spectral asy\-mptotics for the Steklov problem on
arbitrary polygonal domains is a difficult question. We have used
software from the FEniCS Project (see http://fenicsproject.org/ and \cite{fenicsbook})
to investigate the behaviour of the  
Steklov eigenvalues for some specific examples. This was done using an
implementation due to B. Siudeja~\cite{bartekweb} which was already
applied in~\cite{KKKNPPS}. For the sake of completeness, we discuss two
of these experiments here.

\begin{example}(Equilateral triangle)
  We have computed the first 60 normalized eigenvalues $\sigma_jL$ of
  an equilateral triangle. The results lead to a conjecture that
  $$\sigma_{2j}L=\sigma_{2j+1}L+\oo(1)=2\pi j+\oo(1).$$
\end{example}

\begin{example}(Right isosceles triangle)
  For the right isosceles triangle with  sides of lengths
  $1,1,\sqrt{2}$, we have also computed the first 60 normalized
  eigenvalues. The numerics indicate that the spectrum is
  composed of two sequences of eigenvalues, one of is which behaving as a
  sequence of double eigenvalues
  $$\pi j+\oo(1)$$ and the other one as a sequence of simple eigenvalues
  $$\frac{\pi}{\sqrt{2}}(j+1/2)+\oo(1).$$
\end{example}


In the context of the sloshing problem, some related  conjectures  have
been proposed in~\cite{foxkut}.



\section{Geometric inequalities for Steklov eigenvalues}\label{section:geometricinequalities}
\subsection{Preliminaries}
Let us start with the following simple observation. if a Euclidean domain $\Omega\subset\mathbb{R}^n$ is scaled by a factor $c>0$, then
\begin{gather}\label{sigmakscaling}
  \sigma_k(c\,\Omega)=c^{-1}\sigma_k(\Omega).
\end{gather}
Because of this scaling property, maximizing  $\sigma_k(\Omega)$ among domains with
fixed perimeter is equivalent to  maximizing  the normalized eigenvalues
$\sigma_k(\Omega)|\partial\Omega|^{1/{(n-1)}}$ on arbitrary domains. Here and further on we use the notation $|\cdot|$ to denote the volume of a manifold.

All the results concerning geometric bounds are proved using a  variational
characterization of the eigenvalues.  Let
$\mathcal{E}(k)$ be the set of all $k$ dimensional subspaces of the
Sobolev space $H^1(\Omega)$ which are orthogonal to constants on the
boundary $\partial\Omega$, then 
\begin{gather}\label{varcharsigmak}
  \sigma_k(\Omega,g)=\min_{E\in\mathcal{E}(k)}\sup_{0\neq u\in E}R(u),
\end{gather}
where the \emph{Rayleigh quotient} is
$$R(u)=\frac{\int_{\Omega}|\nabla u|^2\,dA}{\int_{M} u^2\,dS}.$$
In particular, the first nonzero eigenvalue is given by
\begin{gather*}
  \sigma_1(\Omega)=\min\Bigl\{R(u)\,:\,u\in H^1(\Omega),\,\int_{\partial\Omega}u\,dS=0\Bigr\}.
\end{gather*}
These variational characterizations are similar
to those of Neumann eigenvalues on $\Omega$, where the integral in the
denominator of $R(u)$ would be on the domain $\Omega$ rather than on
its boundary. 

One last observation is in order before we discuss isoperimetric
bounds. Let $\Omega_\epsilon:=(-1,1)\times(-\epsilon,\epsilon)$ be a
thin rectangle ($0<\epsilon<<1$). 
It is easy to see using using~(\ref{varcharsigmak}) that
\begin{gather}
\label{collapse}
  \lim_{\epsilon\rightarrow 0}\sigma_k(\Omega_\epsilon)=0,\qquad\mbox{ for each }k\in\mathbb{N}.
\end{gather}
In fact, it suffices for a family $\Omega_\epsilon$ of domains to have
a \emph{thin collapsing passage} (see Figure~\ref{figure:passage}) to
guarantee that $\sigma_k(\Omega_\epsilon)$ becomes arbitrarily small as
$\epsilon\searrow 0$ (see \cite[section 2.2]{gp}.)
\begin{figure}
  \centering
  \includegraphics[width=6cm]{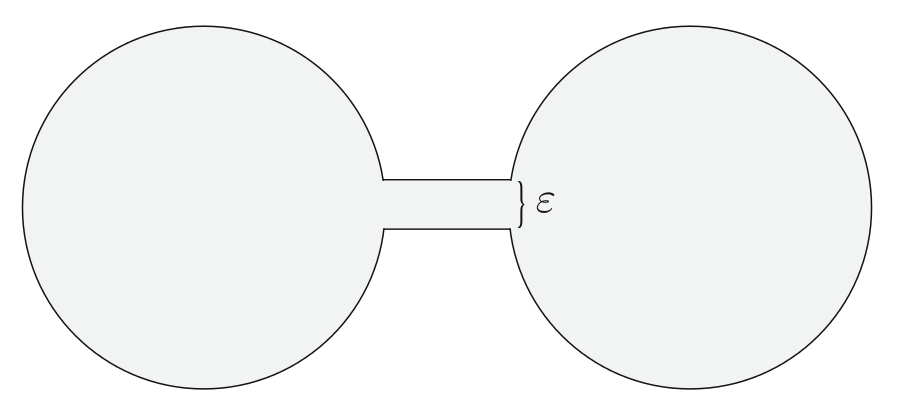}
  \caption{A domain  with a thin passage.}
  \label{figure:passage}
\end{figure}
This follows from the variational characterization: the idea is to construct a sequence of $k$ pairwise orthogonal test functions that oscillate 
inside the thin passage and vanish outside.  Then the Dirichlet energy
of such functions will be very small, while the denominator in the
Rayleigh quotient remains bounded away from zero, due to the
integration over the side of the passage. Hence, the Rayleigh quotient
will tend to zero, yielding \eqref{collapse}.
When considering an isoperimetric constraint, it is therefore more
interesting to maximize eigenvalues.

\subsection{Isoperimetric upper bounds for Steklov eigenvalues on
  surfaces}
\label{isoperim}
On a compact surface with boundary, the following theorem gives a
general upper bound in terms of the genus and the number of boundary components.
\begin{theorem}[\cite{gp3}]\label{thmHPSgenus}
  Let $\Omega$ be a smooth orientable compact surface with boundary
  $M=\partial\Omega$ of length $L$. Let $\gamma$ be the genus of $\Omega$ and let
  $l$ be the number of its boundary components. Then the following holds:
  \begin{gather}\label{InequalityGPHPS}
    \sigma_p\sigma_q\,L^2\leq
    \begin{cases}
      \pi^2(\gamma+l)^2 (p+q)^2 &\mbox{ if }p+q\mbox{ is even},\\
      \pi^2(\gamma+l)^2  (p+q-1)^2 &\mbox{ if }p+q\mbox{ is odd},
    \end{cases}
  \end{gather}
  for any pair of integers $p,q \ge 1$.
  In particular by setting $p=q=k$ one obtains the following bound:
  \begin{gather}\label{InequalityHPSgenusSimple}
    \sigma_k(\Omega)L(M)\leq 2\pi(\gamma+l)k.
  \end{gather}
\end{theorem}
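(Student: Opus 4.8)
The plan is to combine the variational characterization \eqref{varcharsigmak} with the conformal invariance of the Dirichlet energy and a supply of trial functions obtained by pulling back functions on the unit disk $\mathbb{D}$. The starting point is that in dimension two the numerator $\int_\Omega|\nabla u|^2\,dA$ of the Rayleigh quotient depends only on the conformal class of the metric. Hence, if $\phi\colon\Omega\to\mathbb{D}$ is a proper holomorphic branched covering of degree $N$ mapping $\partial\Omega$ onto the circle $S^1$, then for any $v$ on $\mathbb{D}$ the pullback $u=v\circ\phi$ satisfies $\int_\Omega|\nabla u|^2\,dA=N\int_{\mathbb{D}}|\nabla v|^2\,dA$, while $\int_M u^2\,dS=\int_{S^1}v^2\,d\mu$, where $\mu=\phi_*(ds)$ is the pushforward of boundary arc-length, of total mass $\mu(S^1)=L$. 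The role of the genus and boundary-count hypotheses is precisely to control $N$: by a theorem of Ahlfors and Gabard, a compact orientable surface of genus $\gamma$ with $l$ boundary components admits such a map with $N\le\gamma+l$. This is where the factor $(\gamma+l)$ enters.

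First I would use this conformal map to reduce the problem to producing good trial functions on $\mathbb{D}$, and to bound the product $\sigma_p\sigma_q$ I would apply the Hersch--Payne--Schiffer method. One subdivides $S^1$ into $p+q$ arcs of equal $\mu$-measure $L/(p+q)$ when $p+q$ is even (and into $p+q-1$ arcs when $p+q$ is odd), and from these arcs builds two families of trial functions, associated with $\sigma_p$ and $\sigma_q$ respectively, whose boundary traces are supported on disjoint unions of arcs. Disjointness of supports makes the members of each family pairwise orthogonal in $L^2(d\mu)$, hence $L^2(\partial\Omega)$-orthogonal after pullback. A Hersch-type balancing argument --- precomposing $\phi$ with an automorphism of $\mathbb{D}$ and positioning the subdivision points by a continuity/degree argument --- is needed to guarantee that all the trial functions can simultaneously be made orthogonal to the constants on $\partial\Omega$, so that they are admissible in \eqref{varcharsigmak}.

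Then I would estimate. Since the Dirichlet energy is conformally invariant and $N\le\gamma+l$, each trial function obeys $\int_\Omega|\nabla u|^2\,dA\le(\gamma+l)\int_{\mathbb{D}}|\nabla v|^2\,dA$, whereas the denominator equals $\int_{S^1}v^2\,d\mu$. Estimating the disk-side energies of the arc functions against their boundary $L^2(d\mu)$-norms produces the factor $\pi^2(p+q)^2/L^2$ (respectively $\pi^2(p+q-1)^2/L^2$), while the equidistribution of $\mu$ over the arcs keeps the denominators bounded below. Inserting the two families into the min-max \eqref{varcharsigmak} for $\sigma_p$ and for $\sigma_q$ separately and multiplying the resulting bounds yields \eqref{InequalityGPHPS}. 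The simplified inequality \eqref{InequalityHPSgenusSimple} then follows immediately on setting $p=q=k$, noting that $p+q=2k$ is even, and taking square roots.

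The main obstacle is the simultaneous balancing step. Controlling a single eigenvalue by disjointly supported trial functions is routine once the conformal map is in hand, but the product bound forces one to satisfy several orthogonality constraints at once, which requires a genuinely topological argument (a Brouwer or intermediate-value type statement about the configuration of arc endpoints together with the Möbius parameter). Extracting the sharp constant, and in particular the parity-dependent split between $(p+q)^2$ and $(p+q-1)^2$, hinges on the precise combinatorics of how the arcs are paired and folded. A secondary technical point is the correct invocation of the Ahlfors--Gabard degree bound and the verification that the branch points of $\phi$ cause no loss in the energy identity.
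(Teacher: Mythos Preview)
Your proposal is correct and takes essentially the same route as the paper, which summarizes the proof as combining Gabard's Ahlfors-type proper holomorphic map $\phi:\Omega\to\mathbb{D}$ of degree at most $\gamma+l$ with the Hersch--Payne--Schiffer argument for planar domains. The one point worth sharpening is that in the HPS scheme the two families of trial functions are not built and estimated independently: they are \emph{harmonic conjugates} of one another, and it is this complex-analytic pairing (equal Dirichlet energies, together with a Hilbert-transform inequality on the boundary traces) that produces the product bound with the sharp constant and the parity dichotomy---your arc-subdivision picture is compatible with this, but the conjugate-function mechanism is the actual engine, and no M\"obius balancing is needed.
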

The proof of Theorem~\ref{thmHPSgenus} is based on the existence of a
proper holomorphic covering map $\phi:\Omega\rightarrow\mathbb{D}$ of
degree $\gamma+l$ (the Ahlfors map), which was proved in~\cite{gabard}, and on an ingenious complex analytic argument due to
J. Hersch, L. Payne and M. Schiffer~\cite{hps}, who used it to prove
inequality~(\ref{InequalityGPHPS}) for planar domains. In this
particular case, inequality~(\ref{InequalityHPSgenusSimple}) is
known to be sharp. Indeed, it was proved in~\cite{gp} that equality is
attained in the limit by a family $\Omega_\epsilon$ of domains
degenerating to a disjoint union of $k$ identical disks (see Figure~\ref{figure:pullapart}).
\begin{figure} 
  \centering
  \includegraphics[width=6cm]{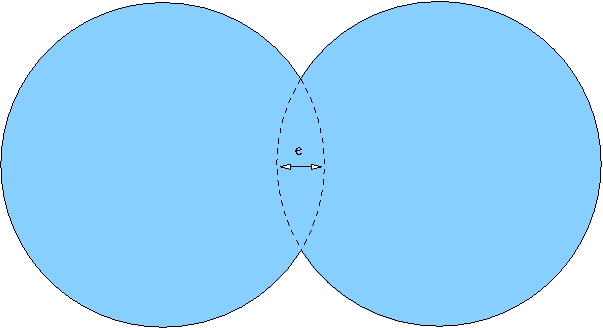}
  \caption{A family of domains $\Omega_\epsilon$ maximizing $\sigma_2L$
    in the limit as $\epsilon \to 0$.}
  \label{figure:pullapart}
\end{figure}
For $k=1$, inequality (\ref{InequalityHPSgenusSimple}) was proved in\cite{fraschoen2}.

The earliest isoperimetric inequality for Steklov eigenvalues is that of
R. Weinstock~\cite{wein}. For simply--connected planar domains ($\gamma=0, l=1$), he proved that 
\begin{gather}\label{Inequality:Weinstock}
  \sigma_1(\Omega)L(\partial\Omega)\leq2\pi
\end{gather}
with equality if and only if $\Omega$ is a disk.
Weinstock used an
argument similar to that of G. Szeg\H{o}~\cite{szego}, who obtained an isoperimetric inequality for the first nonzero Neumann
eigenvalue of a simply--connected domain $\Omega$ normalized by the
measure $|\Omega|$ rather than its perimeter.  In fact, Weinstock's
proof is the simplest application of the center of mass
renormalization (also known as Hersch's lemma, see ~\cite{Hersch,
  SchoenYau, GNP, gp2}).

While Szeg\H{o}'s inequality can be generalized to an arbitrary Euclidean domain
(see \cite{weinberger}), this is not true for Weinstock's
inequality. In particular, as follows from the example below,
Weinstock's inequality fails for non-simply--connected planar domains. 
\begin{example}
The Steklov eigenvalues and eigenfunctions of an annulus have been computed in
\cite{dittmar1}.
On the annulus $A_\epsilon=\mathbb{D}\setminus B(0,\epsilon)$, there
is a radially symmetric Steklov
eigenfunction
$$f(r)=-\left(\frac{1+\epsilon}{\epsilon\log(\epsilon)}\right)\log(r)+1,$$
with the corresponding eigenvalue
$\sigma=\frac{1+\epsilon}{\epsilon\log(1/\epsilon)}$.
All other
eigenfunctions are of the form
$$f_k(r,\theta)=(A_{k}r^k+A_{-k}r^{-k})T(k\theta)\qquad (\mbox{with }k\in\mathbb{N})$$
where $T(k\theta)=\cos(k\theta)$ or $T(k\theta)=\sin(k\theta)$. In order for
$f_k(r,\theta)$ to be a Steklov eigenfunction it is required that
$$\frac{\partial}{\partial_r}f_k(1,\theta)=\sigma
f(1,\theta)\qquad\mbox{ and}\qquad -\frac{\partial}{\partial_r}f_k(\epsilon,\theta)=\sigma
f(\epsilon,\theta),$$ which leads to the following system: 
$$
\left(
  \begin{array}{cc}
  \sigma\epsilon^k+k\epsilon^{k-1}&\sigma\epsilon^{-k}-k\epsilon^{-k-1}\\
  \sigma-k&\sigma+k
\end{array}
\right)
\left(
  \begin{array}{c}
    A_k\\
    A_{-k}
  \end{array}
\right)
=
\left(
  \begin{array}{c}
    0\\
    0
  \end{array}
\right).
$$
This system has a non-zero solution if and only if its determinant vanishes. 
After some simplifications, the Steklov eigenvalues of the annulus $A_\epsilon=\mathbb{D}\setminus B(0,\epsilon)$
are seen to be the roots of the quadratic polynomials
$$p_k(\sigma)=\sigma^2
  -\sigma k\left(\frac{\epsilon+1}{\epsilon}\right)\left(\frac{1+\epsilon^{2k}}{1-\epsilon^{2k}}\right)
  +\frac{1}{\epsilon}k^2\qquad (k\in\mathbb{N}).$$
Each of these eigenvalues is double, corresponding to the choice of a
$\cos$ or $\sin$ function for the angular part $T(k\theta)$ of the corresponding eigenfunction.
For $\epsilon>0$ small enough, this leads in particular to
\begin{gather}\label{sigmaoneannulus}
  \sigma_1(A_\epsilon)=\frac{1}{2\epsilon}\frac{1+\epsilon^{2}}{1-\epsilon}\left(1-\sqrt{1-4\epsilon\left(\frac{1-\epsilon}{1+\epsilon^{2}}\right)^2}\right).
\end{gather}
\end{example}
It follows from formula~(\ref{sigmaoneannulus}) that for the
annulus $A_\epsilon=B(0,1)\setminus B(0,\epsilon)$  one has 
\begin{gather}\label{AsymptoticHole}
  \sigma_1(A_\epsilon)L(\partial
  A_\epsilon)=2\pi\sigma_1(\mathbb{D})+2\pi\epsilon+o(\epsilon)\quad\mbox{as
    }\epsilon\searrow 0.
\end{gather}
Therefore,  $\sigma_1(A_\epsilon)L(\partial
A_\epsilon)>2\pi\sigma_1(\mathbb{D})$ for $\epsilon>0$ small
enough (see Figure~\ref{figure:hole}), and hence Weinstock's inequality \eqref{Inequality:Weinstock} fails.
\begin{figure}
  \centering
  \includegraphics[width=6cm]{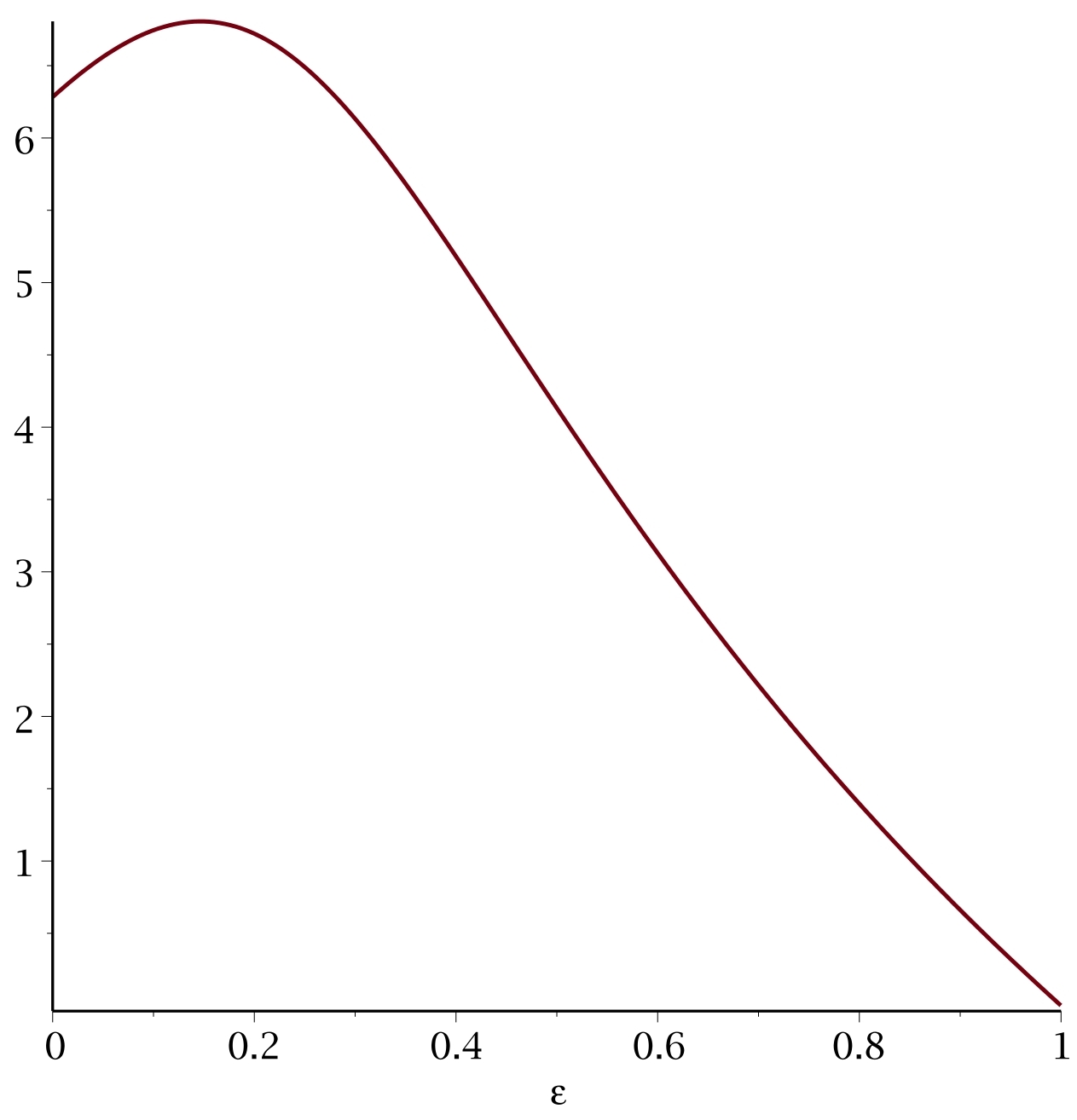}
  \caption{The normalized eigenvalue $\sigma_1(A_\epsilon)L(\partial A_\epsilon)$}
  \label{figure:hole}
\end{figure}
\begin{remark} One can also compute
  the Steklov eigenvalues of the spherical shell
  $\Omega_\epsilon:=B(0,1)\setminus 
  B(0,\epsilon)\subset\mathbb{R}^{n}$ for $n\geq 3$. The eigenvalues
  are the roots of certain quadratic polynomials which can be computed explicitly.
Here again, it is true that for $\epsilon>0$ small enough,
$\sigma_1(\Omega_\epsilon)|\partial\Omega_\epsilon|^{\frac{1}{n-1}}>\sigma_1(\mathbb{B})|\partial\mathbb{B}|^{\frac{1}{n-1}}$. This
computation was part of an undergraduate research project of E. Martel at
Universit\'e Laval.
\end{remark}

Given that Weinstock's inequality is no longer true for non-simply--connected planar domains, one may ask whether the supremum
of $\sigma_1L$ among all planar domains of fixed perimeter is
finite. This is indeed the case, as follows from the following theorem for $k=1$ and $\gamma=0$.
\begin{theorem}[\cite{ceg2}]
\label{unibound} 
 There exists a universal constant $C>0$ such that
  \begin{gather}\label{InequalityNoBoundary}
    \sigma_k(\Omega)L(\partial\Omega)\leq C(\gamma+1)k.
  \end{gather}
\end{theorem}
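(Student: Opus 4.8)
The plan is to bound $\sigma_k$ from above through the variational characterization \eqref{varcharsigmak} by exhibiting $k+1$ test functions with pairwise disjoint supports in $\Omega$ whose Rayleigh quotients are all at most $C(\gamma+1)k/L$. Indeed, if $u_0,\dots,u_k$ have disjoint supports, then any $u=\sum c_i u_i$ satisfies $R(u)\le\max_i R(u_i)$, since both the numerator and the denominator of $R$ split as sums over $i$; intersecting their $(k+1)$-dimensional span with the hyperplane $\{\int_M u\,dS=0\}$ produces an admissible subspace $E\in\mathcal{E}(k)$ on which $\sup R\le\max_i R(u_i)$. The whole difficulty is therefore to manufacture disjointly supported functions that are cheap in Dirichlet energy but carry a definite share $\gtrsim L/k$ of the boundary measure.

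The mechanism that keeps the constant universal is the conformal invariance of the Dirichlet energy $\int_\Omega|\nabla u|^2\,dA$ in dimension two, and the mechanism that produces the factor $\gamma+1$ (rather than the $\gamma+l$ of Theorem~\ref{thmHPSgenus}) is the following. Cap off each boundary component of $\Omega$ with a disk to obtain a \emph{closed} orientable surface $\bar\Omega$ of genus exactly $\gamma$, carrying a complex structure that restricts to the one induced by $g$ on $\Omega$. A closed Riemann surface of genus $\gamma$ admits a non-constant meromorphic function, i.e.\ a holomorphic branched cover $\pi\colon\bar\Omega\to S^2$, of degree $d\le\lfloor(\gamma+3)/2\rfloor\le\gamma+1$ (the gonality bound). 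Restricting $\pi$ to $\Omega$ gives a conformal map $\pi\colon\Omega\to S^2$ covering each point at most $d$ times. Crucially, the degree is controlled by $\gamma$ alone: unlike the Ahlfors map to the disk used for Theorem~\ref{thmHPSgenus}, which must wrap every boundary component around $\partial\mathbb{D}$ and hence has degree at least $l$, mapping to the \emph{closed} sphere decouples the construction from the number of boundary components.

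Next I would push the boundary measure forward, setting $\nu=\pi_*(dS)$, a finite measure on $S^2$ of total mass $L$. A standard covering argument on the fixed geometry of $S^2$ (in the spirit of Grigor'yan--Netrusov--Yau and Korevaar) yields $k+1$ disjoint regions $D_0,\dots,D_k$, each contained in a concentric region $D_i'$ of bounded conformal modulus, with the $D_i'$ still pairwise disjoint and $\nu(D_i)\ge cL/(k+1)$. On each annular gap one builds a cutoff $\psi_i$ on $S^2$, equal to $1$ on $D_i$ and supported in $D_i'$, whose Dirichlet energy $\int_{S^2}|\nabla\psi_i|^2\,dA$ is bounded by a universal constant; here the two-dimensional scale invariance of the logarithmic capacitor is exactly what removes any dependence on the size of $D_i$. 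Pulling back, $u_i:=\psi_i\circ\pi$ has disjoint supports (the $\pi^{-1}(D_i')$ are disjoint in $\Omega$), boundary mass $\int_M u_i^2\,dS=\int_{S^2}\psi_i^2\,d\nu\ge\nu(D_i)\ge cL/(k+1)$, and, by conformal invariance together with $\deg\pi\le d$, Dirichlet energy $\int_\Omega|\nabla u_i|^2\,dA\le d\int_{S^2}|\nabla\psi_i|^2\,dA\le C(\gamma+1)$. Hence $R(u_i)\le C(\gamma+1)(k+1)/(cL)$, and the reduction of the first paragraph yields $\sigma_k L\le C'(\gamma+1)k$.

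The step I expect to be the main obstacle is the measure decomposition: one must guarantee, for an arbitrary finite Borel measure $\nu$ on $S^2$, the existence of $k+1$ well-separated regions each carrying mass $\gtrsim L/k$ while keeping the enveloping regions $D_i'$ disjoint and of bounded modulus, so that the capacitors have uniformly bounded energy. Atoms or strong concentration of $\nu$ (which occur precisely when many boundary components pile onto a small part of $S^2$) are the delicate case, and handling them is what forces the use of the annulus-type covering lemma rather than a naive partition. Once this combinatorial-geometric lemma is in place, the conformal pullback and the variational argument are routine.
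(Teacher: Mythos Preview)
Your proposal is correct and follows essentially the same approach as the one in \cite{ceg2} that the paper cites: cap off the boundary to get a closed surface of genus $\gamma$, use the gonality bound to produce a conformal branched cover to $S^2$ of degree $\le\gamma+1$, push the boundary measure forward, apply the Grigor'yan--Netrusov--Yau capacitor decomposition on the sphere, and pull the resulting cutoffs back using conformal invariance of the Dirichlet energy. The survey only records that the proof ``is based on N.~Korevaar's metric geometry approach \cite{kvr} as described in \cite{gyn}'', and your outline is precisely a fleshing-out of that sentence; your identification of the measure-decomposition lemma as the one nontrivial ingredient is also on target.
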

Theorem \ref{unibound} leads to the the following question:
\begin{open}
\label{ddd}
  What is the maximal value of $\sigma_1(\Omega)$ among Euclidean
  domains $\Omega\subset\mathbb{R}^n$ of fixed perimeter? On which domain (or in the limit of which sequence of domains) is it realized?  
\end{open}
Some related results will be discussed in subsection
\ref{Section:FS}. In particular, in view of  Theorem
\ref{thm:fraschoengenus0}~\cite{fraschoen2}, it is tempting to suggest that
the maximum is realized in the limit by a sequence of domains with the
number of boundary components tending to infinity.

The proof of Theorem \ref{unibound} is based on N. Korevaar's metric geometry
approach~\cite{kvr} as described in~\cite{gyn}. For $k=1$,
inequality~(\ref{InequalityNoBoundary}) holds with 
$C=8\pi$ (see~\cite{kok}). For $k=1$ and $\gamma=0$, it holds with
$C=4\pi$~\cite{fraschoen2} (see Theorem~\ref{thm:fraschoengenus0} below). 
It is also possible to ``decouple'' the genus $\gamma$ and the index
$k$. The following theorem was proved by
A. Hassannezhad~\cite{hassannezhad}, using a generalization of the
Korevaar method in combination with concentration results
from~\cite{ColbMaert}.
\begin{theorem}
  There exists two constants $A,B>0$ such that
  $$\sigma_k(\Omega)L(\partial\Omega)\leq A\gamma+Bk.$$
\end{theorem}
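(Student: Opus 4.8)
The plan is to bound $\sigma_k$ from above purely by constructing trial functions and feeding them into the variational characterization~\eqref{varcharsigmak}. The elementary but crucial observation is that disjoint supports diagonalize the Rayleigh quotient: if $u_1,\dots,u_m\in H^1(\Omega)$ have pairwise disjoint supports, then both $\int_\Omega|\nabla u|^2\,dA$ and $\int_M u^2\,dS$ are additive on their span, so that $R(u)\le\max_i R(u_i)$ for every $u=\sum_i c_iu_i$. Since imposing orthogonality to the constants on $\partial\Omega$ costs only one dimension, a family of $k+1$ disjointly supported functions with $R(u_i)\le\Lambda$ produces a $k$-dimensional subspace in $\mathcal{E}(k)$ on which $R\le\Lambda$, whence $\sigma_k\le\Lambda$. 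The entire problem thus reduces to producing $k+1$ disjointly supported trial functions whose Rayleigh quotients are all bounded by $\Lambda=(A\gamma+Bk)/L$.

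The mechanism that controls each individual $R(u_i)$ efficiently is a concentration/capacity estimate in the spirit of Colbois–Maerten~\cite{ColbMaert}. One works in the metric measure space $(\overline{\Omega},d_g,\nu)$, where $d_g$ is the Riemannian distance on $\Omega$ and $\nu$ is the boundary length measure on $M=\partial\Omega$ that appears in the denominator of $R$. To an ``annulus'' $\{x:\,r<d_g(x,a)<2r\}$ one associates a Lipschitz plateau function, constant on the inner core and vanishing outside the outer ball; the concentration results bound its Dirichlet energy over $\Omega$ by a universal constant while guaranteeing that it captures a controlled share of the boundary mass $\nu(M)=L$. This converts a collection of disjoint, well-separated annuli carrying comparable portions of the boundary length into disjoint trial functions with Rayleigh quotients controlled by (number of annuli)$/L$. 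The point of using the Colbois–Maerten concentration, rather than the bare capacity estimate behind Theorem~\ref{unibound}, is that it does not dissipate the covering constant into the energy bound, which is exactly what later permits the additive conclusion.

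The annuli themselves are produced by a generalization of Korevaar's metric geometry method~\cite{kvr} in the Grigor'yan–Netrusov–Yau form~\cite{gyn}: a metric measure space enjoying a uniform bounded-multiplicity covering property admits, for each $k$, on the order of $k$ pairwise distant annuli each carrying mass $\gtrsim L/k$. Applied globally to a surface of genus $\gamma$ this only recovers the coupled bound $C(\gamma+1)k$ of Theorem~\ref{unibound}, because the handles of a high-genus surface prevent the covering property from holding with a universal constant and thereby degrade the mass lower bound by a factor comparable to $\gamma$. The decisive step is to separate the topological contribution from the index contribution: using the conformal invariance of the two-dimensional Dirichlet energy one may pass to a convenient representative of the conformal class, and one splits $\overline{\Omega}$ into a bulk region on which the covering multiplicity is universal and a family of $O(\gamma)$ topologically exceptional regions. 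The bulk yields $\sim k$ annuli with universally controlled energy-to-mass ratio (the $Bk$ term), while each exceptional region contributes only a bounded number of trial functions of controlled Rayleigh quotient (the $A\gamma$ term). Summing the two counts and passing through the concentration estimate of the previous paragraph delivers the required disjointly supported functions with $R\le(A\gamma+Bk)/L$.

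The hard part is precisely this decoupling. One must establish a refined decomposition in which the number of regions where the covering and concentration arguments fail is bounded \emph{linearly in $\gamma$ and independently of $k$}, while on the complementary bulk the covering multiplicity can be taken universal. In Korevaar's original scheme the genus and the index are intertwined and one inevitably pays the product $\gamma k$; upgrading this to the additive bound $A\gamma+Bk$ is the content of Hassannezhad's generalization of the method~\cite{hassannezhad}, and it is the only place where genuinely new input beyond Theorem~\ref{unibound} is needed.
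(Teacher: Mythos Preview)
Your outline matches the approach the paper itself attributes to the result: the paper does not give a proof but states that the theorem is due to Hassannezhad~\cite{hassannezhad}, proved ``using a generalization of the Korevaar method in combination with concentration results from~\cite{ColbMaert}.'' Your sketch expands exactly these two ingredients---the GNY/Korevaar construction of disjoint capacitors and the Colbois--Maerten concentration estimate---and correctly identifies the decoupling of $\gamma$ and $k$ as the place where Hassannezhad's contribution enters.

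One caveat on accuracy of detail: your description of the decoupling as a geometric splitting of $\overline{\Omega}$ into a ``bulk'' with universal covering multiplicity plus $O(\gamma)$ ``exceptional'' regions is a plausible narrative, but it is not quite how Hassannezhad actually implements it. In~\cite{hassannezhad} the additive structure $A\gamma+Bk$ arises analytically rather than through a spatial decomposition: the GNY capacitor construction is run with respect to two measures simultaneously, and the resulting capacity estimate carries two terms, one controlled by a conformal invariant (bounded by the genus via a Yang--Yau type argument) and one by the boundary mass divided among the $k$ capacitors. So the mechanism is a two-term capacity bound rather than a bulk/exceptional dichotomy. This does not affect correctness at the level of a sketch, since you explicitly defer the hard step to the reference, but you should be aware that your heuristic picture of the decoupling differs from the actual argument.
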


At this point, we have considered maximization of the Steklov
eigenvalues under the constraint of fixed perimeter. 
This is natural, since they are the eigenvalues of
to the Dirichlet-to-Neumann operator, which acts on the
boundary. Nevertheless, it is also possible to normalize the 
eigenvalues by fixing the measure of $\Omega$.
 The following theorem
was proved by F. Brock~\cite{brock}. 
\begin{theorem}\label{thm:brock}
  Let $\Omega\subset\mathbb{R}^{n}$ be a bounded Lipschitz
  domain. Then
  \begin{gather}\label{Inequality:Brock}
    \sigma_1(\Omega)|\Omega|^{1/n}\leq \omega_n^{1/n},
  \end{gather}
  with equality if and only if $\Omega$ is a ball. Here $\omega_n$ is
  the volume of the unit ball $\mathbb{B}^n\subset\mathbb{R}^n$.
\end{theorem}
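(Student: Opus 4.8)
The plan is to bound $\sigma_1$ from above via the variational characterization (\ref{varcharsigmak}), using the Euclidean coordinate functions as test functions, and then to reduce the theorem to a purely geometric (weighted isoperimetric) inequality. Since both $\sigma_1$ and $|\Omega|$ are invariant under translations of $\Omega$, I would first translate $\Omega$ so that its boundary has center of mass at the origin, i.e.
\begin{equation*}
  \int_{\partial\Omega} x_i \, dS = 0, \qquad i = 1, \dots, n.
\end{equation*}
Each coordinate function $x_i$ is then orthogonal to the constants on $\partial\Omega$, hence admissible in (\ref{varcharsigmak}). Since $\nabla x_i = e_i$, we have $\int_\Omega |\nabla x_i|^2 \, dA = |\Omega|$, so that $R(x_i) = |\Omega| / \int_{\partial\Omega} x_i^2 \, dS$. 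Applying $\sigma_1 \le R(x_i)$ and summing over $i$ gives
\begin{equation*}
  \sigma_1 \int_{\partial\Omega} |x|^2 \, dS \ \le \ n\,|\Omega|.
\end{equation*}
In contrast with Weinstock's proof, the center of mass is fixed here by a mere translation rather than by Hersch's lemma, and the argument works verbatim in every dimension, because the Dirichlet energy of $x_i$ is controlled directly by $|\Omega|$.

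Next I would reduce the theorem to the weighted isoperimetric inequality
\begin{equation}\label{eq:brockwiso}
  \int_{\partial\Omega} |x|^2 \, dS \ \ge \ \int_{\partial B^*} |x|^2 \, dS \ = \ n\,\omega_n^{-1/n}\,|\Omega|^{(n+1)/n},
\end{equation}
where $B^*$ is the ball centered at the origin with $|B^*| = |\Omega|$; the displayed value is the direct computation $\int_{\partial B(0,R)}|x|^2 \, dS = R^2\cdot n\omega_n R^{n-1}$ with $\omega_n R^n = |\Omega|$. Combining \eqref{eq:brockwiso} with the first step immediately yields $\sigma_1 \le \omega_n^{1/n}|\Omega|^{-1/n}$, which is (\ref{Inequality:Brock}). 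For the ball $B(0,R)$ centered at the origin the $x_i$ are first-degree harmonic polynomials, hence by Example~\ref{balls} Steklov eigenfunctions with $\sigma_1 = 1/R$; thus the first step is an equality there, and \eqref{eq:brockwiso} is an equality as well, consistent with the claimed characterization of the extremal domain.

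The main obstacle is the weighted isoperimetric inequality \eqref{eq:brockwiso}. The weight $w(x) = |x|^2$ is radial and strictly increasing in $|x|$, so I expect the centered ball to be the \emph{unique} minimizer of the weighted perimeter $\int_{\partial\Omega} w \, dS$ under a volume constraint, and I would establish this by symmetrization. The centered ball is at least a critical point: on $\partial B(0,R)$ the first variation density of the weighted perimeter is $wH + \partial_\nu w = (n-1)R + 2R = (n+1)R$, which is constant. Moreover, among radial competitors the centered ball strictly beats every annulus, since enlarging the outer radius to restore the volume strictly increases the weighted perimeter. The delicate points, where I expect the real work to lie, are (i) justifying the existence and radial symmetry of minimizers of this weighted problem at the level of Lipschitz (or merely finite-perimeter) domains, rather than assuming smoothness, and (ii) extracting the rigidity statement that equality in \eqref{eq:brockwiso} forces $\Omega$ to be a centered ball, which, combined with the equality analysis of the first step, yields the ``only if'' direction of the theorem.
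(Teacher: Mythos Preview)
Your approach is correct and coincides with what the paper indicates: the survey does not give a full proof but states that it ``is based on a weighted isoperimetric inequality for moments of inertia of the boundary $\partial\Omega$,'' which is precisely your inequality \eqref{eq:brockwiso} for $\int_{\partial\Omega}|x|^2\,dS$, combined with the coordinate test-function argument you describe. Your identification of the weighted isoperimetric inequality (and its rigidity) as the main remaining ingredient is exactly right.
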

Observe that no connectedness assumption is required this time.
The  proof of Theorem~\ref{thm:brock} is based on a weighted
isoperimetric inequality for moments of inertia of the boundary
$\partial\Omega$.
A quantitative improvement of Brock's theorem was obtained in~\cite{brasdephilruffini}
in terms of the  \emph{Fraenkel asymmetry} of a
bounded domain $\Omega\subset\mathbb{R}^n$:
$$\mathcal{A}(\Omega):=\inf
\left\{
  \frac{\|1_\Omega-1_B\|_{L^1}}{|\Omega|}\,:\, B\mbox{ is a ball with
  } |B|=|\Omega|
\right\}.$$
\begin{theorem}\label{thm:stabilitySteklov}
  Let $\Omega\subset\mathbb{R}^{n}$ be a bounded Lipschitz
  domain. Then
  \begin{gather}\label{Stability}
    \sigma_1(\Omega)|\Omega|^{1/n}\leq \omega_n^{1/n}(1-\alpha_n\mathcal{A}(\Omega)^2),
  \end{gather}
  where $\alpha_n>0$ depends only on the dimension.
\end{theorem}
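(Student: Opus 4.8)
The plan is to upgrade the proof of Brock's inequality (Theorem~\ref{thm:brock}) by inserting a \emph{quantitative} weighted isoperimetric inequality at the precise step where the round ball is shown to be optimal. First I would recall the mechanism behind \eqref{Inequality:Brock}. After translating $\Omega$ so that the center of mass of its boundary lies at the origin, i.e. $\int_{\partial\Omega}x_i\,dS=0$ for $i=1,\dots,n$, the coordinate functions $x_i$ lie in $H^1(\Omega)$ and are boundary-orthogonal to constants, hence admissible in the variational characterization \eqref{varcharsigmak}. Since $|\nabla x_i|^2\equiv 1$, testing with each $x_i$ and summing over $i$ yields
$$\sigma_1(\Omega)\int_{\partial\Omega}|x|^2\,dS\;\leq\;\sum_{i=1}^n\int_\Omega|\nabla x_i|^2\,dA\;=\;n\,|\Omega|,$$
so that $\sigma_1(\Omega)\leq n|\Omega|\big/\!\int_{\partial\Omega}|x|^2\,dS$. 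Brock's inequality is then exactly equivalent to the \emph{weighted isoperimetric inequality} asserting that, among sets of fixed volume, the boundary moment of inertia $\int_{\partial\Omega}|x|^2\,dS$ is minimized by the origin-centered ball; a direct computation shows the centered ball $B_R$ with $|B_R|=|\Omega|$ attains the value $n\omega_nR^{n+1}$, where $R=(|\Omega|/\omega_n)^{1/n}$, and reinserting this reproduces exactly the right-hand side $\omega_n^{1/n}|\Omega|^{-1/n}$.

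To obtain the stability estimate \eqref{Stability}, I would replace this last ingredient by its quantitative counterpart: a dimensional constant $c_n>0$ for which
$$\int_{\partial\Omega}|x|^2\,dS\;\geq\;n\omega_nR^{n+1}\bigl(1+c_n\,\mathcal{A}(\Omega)^2\bigr).$$
Granting this, substitution into the bound above gives $\sigma_1(\Omega)|\Omega|^{1/n}\leq\omega_n^{1/n}\bigl(1+c_n\mathcal{A}(\Omega)^2\bigr)^{-1}$, and since the Fraenkel asymmetry is bounded ($0\le\mathcal{A}(\Omega)\le 2$), the elementary estimate $(1+t)^{-1}\le 1-\delta t$ valid on a bounded range of $t$ converts this into $\omega_n^{1/n}\bigl(1-\alpha_n\mathcal{A}(\Omega)^2\bigr)$ for a suitable $\alpha_n>0$, which is precisely \eqref{Stability}. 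I should note that the centering forces the weighted comparison to be made against the \emph{origin-centered} ball, whereas $\mathcal{A}(\Omega)$ takes an infimum over \emph{all} equivolume balls; hence the asymmetry measured relative to this particular ball dominates $\mathcal{A}(\Omega)$, so the loss of freedom in the center only strengthens the conclusion and costs nothing.

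The heart of the matter, and the step I expect to be the main obstacle, is therefore the quantitative weighted isoperimetric inequality for the radial weight $w(x)=|x|^2$, with the \emph{sharp quadratic} dependence on $\mathcal{A}(\Omega)$. I would approach it by the combination of the Cicalese--Leonardi selection principle with a Fuglede-type second-variation analysis. The selection principle, via a penalized minimization together with regularity theory for almost minimizers of the weighted perimeter $\int_{\partial E}|x|^2\,dS$ (which applies here because $w$ is smooth and bounded away from zero in a neighborhood of the competitor balls, so near-minimizers stay away from the origin), reduces the general case to \emph{nearly spherical} sets whose boundary is a small normal graph over $\partial B_R$. For such sets I would expand the weighted-perimeter deficit to second order in the graph function and identify the leading term as a positive-definite quadratic form in the spherical-harmonic coefficients, bounded below by a Poincar\'e-type spectral gap on $S^{n-1}$; the zeroth mode is pinned by the volume constraint and the first modes—the translations—are exactly the directions annihilated by minimizing the asymmetry over ball centers. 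It is this handling of the translational degeneracy that leaves a genuine lower bound of order $\|u\|^2\gtrsim\mathcal{A}(\Omega)^2$ rather than a vanishing one, and thereby produces the exponent $2$. Verifying the spectral gap for the \emph{weighted} functional with the correct treatment of the degenerate first modes, and confirming that the selection principle transfers from the classical to the weighted perimeter, are the technical crux; the reduction itself and the concluding linearization are routine.
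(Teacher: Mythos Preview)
Your proposal is correct and follows essentially the same route as the paper, which simply records that the result of~\cite{brasdephilruffini} ``is based on a quantitative refinement of the isoperimetric inequality''; you have accurately reconstructed what that means, namely inserting a quantitative version of Brock's weighted boundary-moment inequality into the coordinate test-function argument. One small correction to your Fuglede sketch: for the weighted perimeter $\int_{\partial E}|x|^2\,dS$ the weight is \emph{not} translation invariant, so the first spherical-harmonic modes are not degenerate here---translations genuinely increase the functional---which makes the second-variation step easier than you indicate and directly yields control by the asymmetry to the \emph{centered} ball, which as you correctly note dominates $\mathcal{A}(\Omega)$.
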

The proof of Theorem~\ref{thm:stabilitySteklov} is based on a
quantitative refinement of the isoperimetric inequality. It would be interesting to prove a
similar stability result for Weinstock's inequality:
\begin{open}
  Let $\Omega$ be a planar simply--connected domain such that
the difference  $2\pi-\sigma_1(\Omega)L(\partial\Omega)$ is small. Show that $\Omega$ must be close
  to a disk (in the sense of Fraenkel asymmetry or some other measure of proximity).
\end{open}

\subsection{Existence of maximizers and free boundary minimal
  surfaces}\label{Section:FS}
 A \emph{free boundary submanifold} is a proper minimal submanifold of some unit ball
$\mathbb{B}^n$ with its boun\-dary meeting the sphere $\mathbb{S}^{n-1}$
orthogonally. These are characterized by their Steklov eigenfunctions.
\begin{lemma}[\cite{fraschoen}]
\label{lemmafs}
  A properly immersed submanifold  $\Omega$ of the ball $\mathbb{B}^n$
  is a free boun\-dary submanifold if and only if the restriction to
  $\Omega$ of the coordinate functions $x_1,\cdots,x_n$ satisfy
  \begin{equation*}
    \begin{cases}
      \Delta x_i=0& \mbox{ in } \Omega,\\

      \partial_\nu x_i= \,x_i& \mbox{ on }\partial\Omega.
    \end{cases}
  \end{equation*}
\end{lemma}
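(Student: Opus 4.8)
The plan is to reduce each of the two conditions in the Steklov system to a standard geometric characterization, using the fact that the restrictions of the ambient coordinate functions encode the extrinsic geometry of $\Omega$. Throughout, let $\vec{x}=(x_1,\dots,x_n)$ denote the position vector of the immersion, let $\Delta_\Omega$ be the Laplace--Beltrami operator of the induced metric, and let $\nu$ be the outward unit conormal of $\partial\Omega$ in $\Omega$ (the unit vector tangent to $\Omega$, normal to $\partial\Omega$, pointing out of $\Omega$).

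First I would invoke Beltrami's formula: for an isometric immersion into $\mathbb{R}^n$ one has $\Delta_\Omega \vec{x}=\vec{H}$, where $\vec{H}$ is the mean curvature vector of $\Omega$ in $\mathbb{R}^n$. Read componentwise, this says that $\Delta x_i=0$ for every $i$ if and only if $\vec{H}\equiv 0$, that is, if and only if $\Omega$ is minimal. This matches the interior equations $\Delta x_i=0$ with the minimality half of the definition of a free boundary submanifold, in both directions at once.

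Next I would treat the boundary condition. Since $\nabla_\Omega x_i$ is the tangential projection of the constant Euclidean field $e_i$ onto $T\Omega$, and since $\nu\in T\Omega$, the normal component of $e_i$ contributes nothing, giving
$$\partial_\nu x_i=\langle \nabla_\Omega x_i,\nu\rangle=\langle e_i,\nu\rangle=\nu_i,$$
where $\nu_i$ is the $i$-th Euclidean component of $\nu$. On the other hand $x_i=\langle e_i,\vec{x}\rangle$ is simply the $i$-th component of $\vec{x}$. Hence the full system of boundary conditions $\partial_\nu x_i=x_i$, $i=1,\dots,n$, is equivalent to the single vector identity $\nu=\vec{x}$ along $\partial\Omega$. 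Properness supplies $\partial\Omega\subset\mathbb{S}^{n-1}$, so $|\vec{x}|=1$ on $\partial\Omega$ and $\vec{x}$ is exactly the outward unit normal to $\mathbb{S}^{n-1}$ there; by definition $\Omega$ meets $\mathbb{S}^{n-1}$ orthogonally along $\partial\Omega$ precisely when the conormal $\nu$ is normal to the sphere, i.e. parallel to $\vec{x}$, and since both are outward-pointing unit vectors this means $\nu=\vec{x}$. Combining the two equivalences yields the lemma.

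This argument is essentially a chain of definitions, so there is no serious analytic obstacle; the only points requiring care are the conventions. One must fix the sign and normalization in Beltrami's formula consistently, and one must pin down the precise meaning of \emph{orthogonal meeting} as the statement that the outward conormal $\nu$ is the radial field $\vec{x}$ on $\partial\Omega$. Once these conventions are settled, the computation of $\partial_\nu x_i$ and the identification $\nu=\vec{x}$ are immediate.
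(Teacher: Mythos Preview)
Your argument is correct and is exactly the standard proof: Beltrami's identity $\Delta_\Omega\vec{x}=\vec{H}$ handles the interior condition, and the componentwise computation $\partial_\nu x_i=\langle e_i,\nu\rangle=\nu_i$ reduces the boundary condition to $\nu=\vec{x}$ along $\partial\Omega$, which is the orthogonal-intersection requirement. The only caveats you raise (sign/normalization in Beltrami's formula, and that the outward conormal agrees in orientation with the radial field) are precisely the ones that need to be fixed once and are routine.

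There is nothing to compare against here: the survey does not supply a proof of this lemma but simply quotes it from Fraser--Schoen. Your write-up fills that gap with the same argument one finds in the original reference.
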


This link was exploited by A. Fraser and
R. Schoen who developed the theory of extremal metrics for Steklov
eigenvalues. See~\cite{fraschoen,fraschoen2} and
especially~\cite{fraschoen3} where an overview is presented.

Let $\sigma^\star(\gamma,k)$ be the supremum of $\sigma_1L$ taken over
all Riemannian metrics on a compact surface of genus $\gamma$ with $l$
boundary components. In~\cite{fraschoen2}, a geometric
characterization of maximizers was proved.
\begin{proposition}\label{prop:fraschoencritmetric}
  Let $\Omega$ be a compact surface of genus $\gamma$ with $l$
  boundary components and let $g_0$ be a 
  smooth metric on $\Omega$ such that
  $$\sigma_1(\Omega,g_0)L(\partial\Omega,g_0)=\sigma^\star(\gamma,l).$$
  Then there exist eigenfunctions $u_1,\cdots,u_n$ corresponding to
  $\sigma_1(\Omega)$ such that
  the map 
  $$u=(u_1,\cdots,u_n):\Omega\rightarrow\mathbb{B}^n$$
  is a conformal minimal immersion such that $u(\Omega)\subset\mathbb{B}^n$ is
  a free boundary solution, and 
  is an isometry on $\partial\Omega$ up to a rescaling by a constant factor.
\end{proposition}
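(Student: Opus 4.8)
The plan is to treat $g_0$ as a maximizer and extract a criticality (first-variation) condition, in the spirit of Nadirashvili and El~Soufi--Ilias for Laplace eigenvalues. First I would normalize: since rescaling $g_0$ leaves the functional $\sigma_1 L$ unchanged, I may assume $\sigma_1(\Omega,g_0)=1$, so that every first eigenfunction $u$ is harmonic in $\Omega$ and satisfies $\partial_\nu u=u$ on $\partial\Omega$; I write $E$ for the first eigenspace, of dimension $m=\dim E$, and normalize $\int_{\partial\Omega}u^2\,ds=1$. The whole point of working in dimension two is the conformal invariance of the Dirichlet energy $\int_\Omega|\nabla u|^2\,dA$: it depends only on the conformal class of $g_0$, so $\sigma_1$ is a function of the conformal structure together with the boundary length density, and these are the only data I need to vary.

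Next I would write the first variation. For a smooth family $g_t$ with $\dot g=h$ I would differentiate the Rayleigh quotient along $E$, obtaining a quadratic form $q_h(u)=\dot D(u)-\sigma_1\dot B(u)$. Decomposing $h=\varphi\,g_0+h^{TT}$ into its trace and trace-free parts, conformal invariance kills the interior contribution of $\varphi$, and two pieces survive: a boundary term governed by the tangential component $h(T,T)$, which pairs against $u^2|_{\partial\Omega}$, and an interior term governed by $h^{TT}$, which pairs against the trace-free part of $\nabla u\otimes\nabla u$, i.e.\ against the Hopf differential $\Phi(u)=(\partial_z u)^2\,dz^2$ in an isothermal coordinate $z$. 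Because the eigenvalue has multiplicity $m$, the map $t\mapsto\sigma_1(g_t)$ is only one-sided differentiable, with derivatives given by the extrema of $q_h$ over the unit sphere of $E$.

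The heart of the argument, and the step I expect to be the main obstacle, is to convert maximality into a balance condition. Since $g_0$ maximizes $\sigma_1L$, no admissible $h$ can strictly increase the functional, so for every $h$ one has $\min_{u\in E}q_h(u)\le 0\le\max_{u\in E}q_h(u)$. I would feed this into a Hahn--Banach / convexity argument in the finite-dimensional space of quadratic forms on $E$, as in El~Soufi--Ilias: the relevant convex set must contain $0$, which produces finitely many eigenfunctions $u_1,\dots,u_n\in E$ realizing the two vanishing conditions simultaneously,
\begin{equation*}
\sum_{i=1}^n u_i^2=\text{const on }\partial\Omega,\qquad \sum_{i=1}^n\Phi(u_i)=0.
\end{equation*}
Keeping both contributions, boundary and conformal, in play at once while handling the non-smoothness coming from the multiplicity is exactly where the care is required.

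Finally I would read off the geometry. Each $u_i$ is harmonic, so $u=(u_1,\dots,u_n)\colon\Omega\to\mathbb{R}^n$ is a harmonic map; the vanishing of $\sum_i\Phi(u_i)$ says that $u$ is weakly conformal, hence a branched minimal immersion. After rescaling the constant to $1$, the identity $\sum_i u_i^2=1$ on $\partial\Omega$ means $u(\partial\Omega)\subset\mathbb{S}^{n-1}$, so $u(\Omega)\subset\mathbb{B}^n$ with boundary on the sphere. The Steklov condition $\partial_\nu u_i=u_i$ gives $u_\nu=u$ along $\partial\Omega$, and Lemma~\ref{lemmafs} then identifies $u(\Omega)$ as a free boundary minimal submanifold. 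For the boundary isometry, conformality yields $|u_s|^2=|u_\nu|^2$ in terms of arclength $s$, while $u_\nu=u$ and $|u|=1$ force $|u_s|^2=1$; thus $u|_{\partial\Omega}$ is an isometry up to the overall constant. The one remaining regularity point is the absence of branch points: boundary branch points are excluded since $|u_s|=1\neq 0$, while interior branch points must be ruled out separately in order to upgrade the branched minimal immersion to a genuine immersion.
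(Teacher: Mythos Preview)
The paper does not supply its own proof of this proposition; it is simply quoted from Fraser--Schoen~\cite{fraschoen2}, with a reference to the analogous Laplace--Beltrami results of Nadirashvili and El~Soufi--Ilias. There is therefore nothing in the paper to compare your argument against beyond the citation itself.

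That said, your sketch is a faithful outline of the Fraser--Schoen argument: normalize, compute the first variation exploiting conformal invariance of the Dirichlet energy so that only the boundary length density and the conformal class matter, use the one-sided differentiability together with a finite-dimensional Hahn--Banach/convexity step in the space of quadratic forms on the eigenspace to produce eigenfunctions satisfying $\sum u_i^2\equiv\text{const}$ on $\partial\Omega$ and $\sum\Phi(u_i)=0$, and then read off the free boundary minimal immersion via Lemma~\ref{lemmafs}. Your derivation of the boundary isometry from $|u_s|=|u_\nu|=|u|=1$ is exactly right. The one caveat is the final sentence: Fraser--Schoen's general characterization gives only a \emph{branched} conformal minimal immersion, and the exclusion of interior branch points is not part of this proposition---the survey's wording ``immersion'' is slightly informal on this point, so you should not look for an additional argument to remove interior branch points here.
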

This result was extended to higher eigenvalues $\sigma_k$
in~\cite{fraschoen3}. This characterization is similar to that of
extremizers of the eigenvalues of the Laplace operator on surfaces
(see~\cite{nad,MR1781616,MR2378458}).

For  surfaces of genus zero, Fraser and Schoen could also obtain an
existence and regularity result
for maximizers, which is the  main result of their paper~\cite{fraschoen2}.
\begin{theorem}\label{thm:fraschoenexistence}
  For  each $l>0$, there exists a smooth metric $g$ on the surface of
  genus zero  with $l$  boundary components such that
  $$\sigma_1(\Omega,g)L_g(\partial\Omega)=\sigma^\star(0,l).$$
\end{theorem}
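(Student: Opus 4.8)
The plan is to recast the problem as a two-step variational problem — an inner maximization over boundary measures inside a fixed conformal class, followed by an outer maximization over the conformal class itself — and then to control the behaviour of maximizing sequences as the conformal structure approaches the boundary of the moduli space. The starting point is the conformal invariance of the Dirichlet energy in dimension two. Writing the Rayleigh quotient from \eqref{varcharsigmak} as
$$R(u)=\frac{\int_{\Omega}|\nabla u|^2\,dA_g}{\int_{M}u^2\,d\mu},\qquad \mu=ds_g,$$
the numerator depends only on the conformal class $[g]$, while the denominator depends only on the boundary measure $\mu$. Hence $\sigma_1(\Omega,g)L_g(\partial\Omega)$ is a functional $\mathcal F([g],\mu)$ of the conformal structure and the boundary measure alone, and maximizing it amounts to maximizing over the finite-dimensional moduli space of genus-zero conformal structures with $l$ boundary components together with probability measures on $\partial\Omega$.

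For a fixed conformal class $c$ I would first show that $\sup_{\mu}\mathcal F(c,\mu)$ (with $\mu(\partial\Omega)=1$) is attained. A maximizing sequence of measures is tight, so the only way to lose the supremum is concentration of mass at boundary points; one checks that such concentration can only decrease the quotient, producing an optimal measure $\mu_c$ and an associated family of first eigenfunctions. The Euler--Lagrange analysis underlying Proposition~\ref{prop:fraschoencritmetric} then shows that these eigenfunctions assemble into a conformal harmonic map into a ball which is an isometry on $\partial\Omega$ up to a constant rescaling.

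The heart of the argument — and the main obstacle — is to prove that a sequence $c_j$ of conformal structures maximizing $c\mapsto\sup_{\mu}\mathcal F(c,\mu)$ remains in a compact part of the moduli space. The moduli space of the $l$-holed sphere is non-compact, and its ends correspond to degenerations in which either a boundary component collapses to a point or the surface pinches along a closed geodesic and splits into two pieces. In each case the phenomenon of a \emph{thin collapsing passage} from \eqref{collapse} shows that eigenvalue mass escaping into a neck contributes nothing in the limit, so that the value achievable by a degenerate configuration is governed by $\sigma^\star(0,l')$ for surfaces (possibly disjoint unions, with spherical ``bubbles'' each contributing at most the disk value $2\pi$) having strictly fewer than $l$ boundary components. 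The crucial step is therefore a strict gap estimate: one must establish that $\sigma^\star(0,l)$ is strictly increasing in $l$ and, combined with an additivity control for disjoint limits, that the degenerate value is strictly smaller than $\sigma^\star(0,l)$. This strict inequality is what forces the maximizing conformal structures to stay interior, and I expect it to be by far the most delicate analytic point, requiring a quantitative comparison of Steklov eigenvalues under degeneration.

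Once compactness in the moduli space is secured, the maximizer $(c_\infty,\mu_\infty)$ exists, and the remaining task is regularity. Here I would invoke the critical-metric characterization: the components of the associated map $u$ are first Steklov eigenfunctions, so by Lemma~\ref{lemmafs} the image is a free boundary minimal submanifold of $\mathbb B^n$ and $u$ is a branched conformal minimal immersion that is a boundary isometry up to scale. A removable-singularity argument shows that the boundary branch points do not occur, so that the induced metric $g=u^*\delta$ is smooth and realizes $\sigma^\star(0,l)$. I expect the degeneration analysis of the third step to be the principal difficulty, with the boundary regularity of the free boundary minimal immersion a close second.
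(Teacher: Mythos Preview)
The paper does not prove this theorem at all: it is a survey, and Theorem~\ref{thm:fraschoenexistence} is simply quoted as ``the main result of their paper~\cite{fraschoen2}'' (Fraser--Schoen), with no proof sketch beyond the remark that analogous existence results for Laplace eigenvalues are in~\cite{JLNNP, petrides}. There is therefore nothing in the paper to compare your proposal against.

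That said, your outline is a faithful summary of the actual Fraser--Schoen strategy in~\cite{fraschoen2}. The two-step reduction via conformal invariance to a maximization over boundary measures and then over the moduli space, the identification of the strict gap $\sigma^\star(0,l)>\sigma^\star(0,l')$ for $l'<l$ as the mechanism preventing degeneration (this monotonicity is recorded in the survey as Theorem~\ref{thm:fraschoengenus0}), and the regularity via the free boundary minimal surface description of Proposition~\ref{prop:fraschoencritmetric} and Lemma~\ref{lemmafs} --- all of this matches their approach. Two small caveats: first, for $l=1$ the moduli space is a point and the argument reduces to Weinstock's theorem, so the degeneration analysis only begins at $l=2$; second, in Fraser--Schoen the strict monotonicity and the existence are proved together by an induction on $l$, so the ``crucial step'' you single out is not an input one can simply quote but is itself part of what must be established. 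You have correctly flagged it as the main difficulty.
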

Similar existence results have been proved for the first nonzero
eigenvalue of the Laplace--Beltrami operator in a fixed conformal
class of a closed surface of
arbitrary genus, in which case conical singularities have to be allowed
(see~\cite{JLNNP, petrides}). 

Proposition~\ref{prop:fraschoencritmetric} and
Theorem~\ref{thm:fraschoenexistence} can be used to study optimal
upper bounds for $\sigma_1$ on surfaces of genus zero. Observe that
inequality~(\ref{InequalityHPSgenusSimple}) can be restated as 
$$\sigma^\star(\gamma,l)\leq 2\pi(\gamma+l).$$
This bound is not sharp in general. For
instance, Fraser and Schoen~\cite{fraschoen2} proved that on annuli
($\gamma=0, l=2$), the maximal value of
$\sigma_1(\Omega)L(\partial\Omega)$ is attained by the
\emph{critical catenoid} ($\sigma_1L\sim 4\pi/1.2$), which is
the minimal surface $\Omega\subset B^3$ parametrized by
$$\phi(t,\theta)=c(\cosh(t)\cos(\theta),\cosh(t)\sin(\theta),t),$$
where the scaling factor $c>0$ is chosen so that the boundary of the
surface $\Omega$ meets the sphere $\mathbb{S}^2$ orthogonally.
\begin{theorem}[\cite{fraschoen2}]\label{thm:fraschoencritcat}
  The supremum of $\sigma_1(\Omega) L(\partial\Omega)$ among surfaces
  of genus 0 with two boundary components is attained by the
  \emph{critical catenoid}. The maximizer is unique
  up to conformal changes of the metric  which are constant on the
  boundary.
\end{theorem}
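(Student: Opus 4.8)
The plan is to combine the existence of a maximizer (Theorem~\ref{thm:fraschoenexistence}) with the characterization of critical metrics (Proposition~\ref{prop:fraschoencritmetric}), and then to classify the resulting free boundary minimal annuli. A compact surface of genus $0$ with two boundary components is an annulus, so Theorem~\ref{thm:fraschoenexistence} provides a smooth metric $g$ with $\sigma_1(\Omega,g)L_g(\partial\Omega)=\sigma^\star(0,2)$. By Proposition~\ref{prop:fraschoencritmetric}, the associated first eigenfunctions assemble into a conformal minimal immersion $u=(u_1,\dots,u_n)\colon\Omega\to\mathbb{B}^n$ onto a free boundary minimal annulus, with $u$ an isometry on $\partial\Omega$ up to a constant rescaling. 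Rescaling so that $\sigma_1(\Omega,g)=1$, Lemma~\ref{lemmafs} shows that each $u_i$ is a Steklov eigenfunction for the eigenvalue $1$. The theorem then reduces to showing that, up to conformal changes of the metric constant on $\partial\Omega$, such an immersion must be the critical catenoid.

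First I would pass to a conformal model. In dimension two the Dirichlet integral $\int_\Omega|\nabla u|^2$ is a conformal invariant, so the Steklov problem depends only on the conformal class of the interior metric together with the induced measure on $\partial\Omega$. I would therefore uniformize the annulus to a flat cylinder $[-T,T]\times S^1$, with $S^1=\mathbb{R}/2\pi\mathbb{Z}$, for a unique modulus $T>0$; writing $g=e^{2\varphi}(dt^2+d\theta^2)$, the Steklov condition $\partial_{\nu_g}u=\sigma u$ becomes a weighted condition $\partial_{\nu_0}u=\sigma\rho\,u$ for the flat metric, where $\nu_0$ is the flat outward conormal and the weight $\rho=e^{\varphi}$ lives on the two boundary circles. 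On the cylinder the harmonic functions $u_i$ take the separated form $a_0+b_0 t+\sum_{k\ge1}\big((\alpha_k\cosh kt+\beta_k\sinh kt)\cos k\theta+(\gamma_k\cosh kt+\delta_k\sinh kt)\sin k\theta\big)$.

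The crux of the argument, and the step I expect to be the main obstacle, is to prove that the maximizing configuration is rotationally symmetric and takes values in $\mathbb{B}^3$. Here one shows that the optimal weight $\rho$ must be constant on each boundary circle: a rearrangement/symmetrization in the $\theta$ variable does not decrease $\sigma_1$, and criticality of the maximizer forces rotational invariance. With $\rho$ rotationally invariant the weighted problem decouples across the Fourier modes in $\theta$, and a mode-by-mode comparison shows that the eigenfunctions realizing $\sigma_1=1$ are precisely the lowest modes, namely the linear function $t$ and the pair $\cosh t\cos\theta,\ \cosh t\sin\theta$, while higher modes give strictly larger eigenvalues. Demanding that these three functions share the eigenvalue $1$ and give a boundary map that is an isometry up to scale forces $n=3$ and pins down the modulus through the transcendental equation $T\tanh T=1$. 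The resulting immersion $(t,\theta)\mapsto c(\cosh t\cos\theta,\cosh t\sin\theta,t)$ is exactly the critical catenoid, with the scale $c$ fixed by the requirement that the boundary lie on $\mathbb{S}^2$.

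Uniqueness then follows from this rigidity. The modulus $T$ solving $T\tanh T=1$ and the scale $c$ are uniquely determined, so every maximizing metric agrees, up to a conformal change that is constant on $\partial\Omega$, with the one induced by the critical catenoid. The genuinely hard point throughout is the symmetry reduction: ruling out non-rotationally-symmetric first eigenfunctions and higher-dimensional images requires controlling the multiplicity of $\sigma_1$ on an annulus and exploiting the minimality of the immersion, and this is where the main analytic work is concentrated.
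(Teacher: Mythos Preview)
Your overall architecture matches the paper: invoke existence (Theorem~\ref{thm:fraschoenexistence}), invoke the characterization of extremals (Proposition~\ref{prop:fraschoencritmetric}), and then classify the resulting free boundary minimal annuli in a ball. The paper's own remark after the statement says exactly this: ``The uniqueness statement is proved using Proposition~\ref{prop:fraschoencritmetric} by showing that the critical catenoid is the unique free boundary annulus in a Euclidean ball.'' Your cylinder uniformization and the computation singling out the modulus $T\tanh T=1$ are correct and consistent with the critical catenoid.

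Where your proposal diverges, and where there is a real gap, is the mechanism for forcing rotational symmetry. You write that ``a rearrangement/symmetrization in the $\theta$ variable does not decrease $\sigma_1$, and criticality of the maximizer forces rotational invariance.'' No such symmetrization inequality is available for the weighted Steklov problem on the cylinder; unlike Faber--Krahn or Szeg\H{o}--Weinberger, there is no known monotonicity of $\sigma_1$ under circular rearrangement of the boundary density $\rho$, and you give no indication of how one would prove it. Fraser and Schoen do not argue this way. Their route (in the paper cited as~\cite{fraschoen2}) is to first bound the multiplicity of $\sigma_1$ on an annulus by three (cf.\ inequality~\eqref{mult1} in this survey), so the immersion lands in $\mathbb{B}^3$, and then to analyze the free boundary minimal annulus in $\mathbb{B}^3$ directly as a minimal surface, using the structure of the conformal immersion and the nodal pattern of the coordinate (first Steklov) eigenfunctions to conclude it must be a catenoid. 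The symmetry is thus deduced \emph{a posteriori} from the minimal surface equations and the free boundary condition, not from an \emph{a priori} rearrangement inequality on $\rho$. You correctly flag this step as the main obstacle, but the method you propose to surmount it is not the one that works, and as stated it does not close the argument.
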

The uniqueness statement is proved using
Proposition~\ref{prop:fraschoencritmetric} by showing that the critical
catenoid is the unique free boundary annulus in a Euclidean ball.
The maximization of $\sigma_1L$ for the M\"obius bands has also
been considered in~\cite{fraschoen2}.

For surfaces of genus zero with arbitrary number of boundary components,
the maximizers are not known explicitly, but the asymptotic behaviour for large
number of boundary components is understood~\cite{fraschoen2}.
\begin{theorem}\label{thm:fraschoengenus0}
  The sequence $\sigma^\star(0,l)$ is strictly increasing and
  converges to $4\pi$. For  each $l\in\mathbb{N}$ a maximizing metric
  is achieved by a free boundary minimal surface $\Omega_l$ of area
  less than $2\pi$. The limit of these minimal surfaces as
  $l\nearrow+\infty$ is a double disk.
\end{theorem}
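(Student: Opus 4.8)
The plan is to transfer the whole problem to the geometry of the free boundary minimal surfaces that realise the maximum. By Theorem~\ref{thm:fraschoenexistence} a maximising metric exists on the genus-zero surface $\Omega$ with $l$ boundary components, and by Proposition~\ref{prop:fraschoencritmetric} together with Lemma~\ref{lemmafs} this metric is, up to a constant rescaling on the boundary, the one induced by a free boundary minimal immersion $\Omega_l\subset\mathbb{B}^3$ whose coordinate functions are first Steklov eigenfunctions. In particular $\sigma_1(\Omega_l)=1$, so that $\sigma^\star(0,l)=\sigma_1(\Omega_l)L(\partial\Omega_l)=L(\partial\Omega_l)$. The first identity I would record is the length--area relation for free boundary minimal surfaces: since the position vector $X$ satisfies $\Delta_{\Omega_l}|X|^2=4$ on a minimal surface, and along $\partial\Omega_l$ the orthogonality to $\mathbb{S}^2$ makes the outward conormal equal to $X$ with $|X|=1$, the divergence theorem yields $L(\partial\Omega_l)=2\,\area(\Omega_l)$. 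Thus $\sigma^\star(0,l)=2\,\area(\Omega_l)$, and the whole theorem becomes a statement about areas: the target constants $4\pi$ and $2\pi$ are exactly the boundary length and area of the equatorial disk counted twice.

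Next I would prove the uniform upper bound $\sigma^\star(0,l)<4\pi$, that is $\area(\Omega_l)<2\pi$. The Hersch--Payne--Schiffer bound \eqref{InequalityHPSgenusSimple} only gives $\sigma^\star(0,l)\le 2\pi l$, which is useless as $l$ grows, so a genuinely different, genus-zero-specific area estimate for free boundary minimal surfaces in $\mathbb{B}^3$ is required here; this is the first serious obstacle. The monotonicity $\sigma^\star(0,l)\le\sigma^\star(0,l+1)$ I would obtain by a degeneration argument: opening a small additional boundary component on the $l$-holed maximiser produces $(l+1)$-holed competitors whose normalised first eigenvalue tends to $\sigma^\star(0,l)$. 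Strict inequality is the more delicate direction, and I would derive it from the structure of the maximising minimal surface, arguing that the extra boundary component can always be inserted so as to increase $\sigma_1 L$ strictly. Combined with the base values $\sigma^\star(0,1)=2\pi$ (Weinstock, \eqref{Inequality:Weinstock}) and $\sigma^\star(0,2)\approx 4\pi/1.2$ (the critical catenoid, Theorem~\ref{thm:fraschoencritcat}), monotonicity and the upper bound show that $\sigma^\star(0,l)$ is strictly increasing and converges to some limit at most $4\pi$.

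To pin the limit at exactly $4\pi$ and identify the limiting surface, I would argue from both sides. For the lower bound I would construct, for large $l$, explicit genus-zero competitors with $l$ boundary components: two nearly parallel copies of the equatorial disk joined near their common boundary by $l$ thin bands, the $l$ gaps between the bands forming the $l$ boundary components. As the bands shrink, the boundary length tends to $2\cdot 2\pi=4\pi$ and the area to $2\pi$; the delicate point, and the second main obstacle, is to choose the widths so that the total ``conductance'' of the bands stays bounded below, preventing the antisymmetric modes (which are $+c$ on one sheet and $-c$ on the other) from producing an eigenvalue below $1$, while keeping the bands thin enough that the length approaches $4\pi$. This forces $\sigma^\star(0,l)\to 4\pi$. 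Finally, to identify the geometric limit I would invoke compactness for free boundary minimal surfaces of bounded area: the maximisers $\Omega_l$ subconverge, and since their areas increase to $2\pi$ while their number of boundary components blows up, the only limit compatible with genus zero is the equatorial disk taken with multiplicity two, i.e.\ the double disk. I expect the genuinely hard technical points to be the uniform area bound $\area(\Omega_l)<2\pi$ and making this neck-forming, multiplicity-two convergence of minimal surfaces rigorous.
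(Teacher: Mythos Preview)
The paper you are working from is a survey, and Theorem~\ref{thm:fraschoengenus0} is stated there without proof, with attribution to Fraser and Schoen~\cite{fraschoen2}. There is therefore no ``paper's own proof'' to compare your proposal against; the survey simply quotes the result.

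That said, your outline tracks the Fraser--Schoen strategy reasonably well in its broad shape: reducing to free boundary minimal surfaces in $\mathbb{B}^3$ via Theorem~\ref{thm:fraschoenexistence} and Proposition~\ref{prop:fraschoencritmetric}, the identity $L(\partial\Omega_l)=2\,\area(\Omega_l)$, the area bound $\area(\Omega_l)<2\pi$, monotonicity and strict monotonicity in $l$, and explicit near-double-disk competitors to force the limit to be $4\pi$. You correctly flag the two genuinely difficult steps: the uniform area bound (which in~\cite{fraschoen2} comes from a delicate analysis using the multiplicity of $\sigma_1$ and balancing, not from anything like~\eqref{InequalityHPSgenusSimple}) and the convergence of the minimal surfaces to the double disk. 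One point to be careful about: your phrase ``the target constants $4\pi$ and $2\pi$ are exactly the boundary length and area of the equatorial disk counted twice'' is correct, but the double disk is not itself a smooth free boundary minimal surface, so the limiting statement requires a varifold or current sense of convergence, and the strict inequality $\area(\Omega_l)<2\pi$ (as opposed to $\le$) needs a separate argument ruling out equality for finite $l$.
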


The results discussed above lead to the following question:

\begin{open}
  Let $\Omega$ be a surface of genus $\gamma$ with $l$ boundary
  components. Does there exist a smooth Riemannian metric $g_0$ such that
  $$\sigma_1(\Omega,g_0)L(\partial\Omega,g_0)\geq\sigma_1(\Omega,g)L(\partial\Omega,g)$$
  for each Riemannian metric $g$?
\end{open}

Free boundary minimal surfaces were used as a tool in the
study of maximizers for $\sigma_1$, but this interplay can be turned around and
used to obtain interesting geometric results.
\begin{corollary}
  For each $l\geq 1$,  there exists an embedded minimal surface of genus
  zero in $\mathbb{B}^3$ with $l$ boundary components satisfying the
  free boundary condition.
\end{corollary}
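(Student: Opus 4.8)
The plan is to extract the desired surfaces directly from the maximizing metrics produced by Theorem~\ref{thm:fraschoenexistence}, interpreted through the geometric characterization of maximizers in Proposition~\ref{prop:fraschoencritmetric}. For each fixed $l\geq 1$, Theorem~\ref{thm:fraschoenexistence} supplies a smooth metric $g$ on the genus-zero surface $\Sigma_l$ with $l$ boundary components realizing the supremum $\sigma^\star(0,l)$. Applying Proposition~\ref{prop:fraschoencritmetric} to this maximizer yields first eigenfunctions $u_1,\dots,u_n$ for $\sigma_1(\Sigma_l,g)$ whose assembly $u=(u_1,\dots,u_n)\colon\Sigma_l\to\mathbb{B}^n$ is a conformal minimal immersion realizing $\Sigma_l$ as a free boundary minimal surface, restricting to an isometry up to scale on $\partial\Sigma_l$. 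By Lemma~\ref{lemmafs} the components of $u$ are exactly the coordinate functions of this free boundary surface, so after the rescaling we obtain a genuine free boundary minimal submanifold. This already establishes existence; it remains to control the codimension and to promote immersion to embedding.

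Next I would reduce the ambient dimension to $n=3$. Choosing the $u_i$ linearly independent makes the image full in $\mathbb{R}^n$, so that $n$ equals the multiplicity of $\sigma_1(\Sigma_l,g)$. The genus-zero hypothesis is essential here: a genus-zero surface with boundary is conformally a planar domain, and the nodal and index constraints that must be satisfied by first Steklov eigenfunctions on such a surface bound the multiplicity of $\sigma_1$ by three. This forces $u(\Sigma_l)\subset\mathbb{B}^3$. The area bound $|u(\Sigma_l)|<2\pi$ furnished by Theorem~\ref{thm:fraschoengenus0} provides the quantitative control needed both to rule out higher multiplicity and to keep the surfaces in a compact family as $l$ varies.

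Finally, embeddedness must be established. Since $u$ is a conformal immersion by first eigenfunctions, I would first exclude interior and boundary branch points using conformality together with the unique-continuation and maximum-principle properties of the harmonic coordinate functions satisfying the free boundary condition of Lemma~\ref{lemmafs}. To pass from an unbranched immersion to an embedding, one analyzes the boundary curves—mapped isometrically up to scale, hence embedded arcs meeting $\mathbb{S}^2$ orthogonally—and applies a maximum-principle argument to exclude interior self-intersections for the genus-zero maximizer. The hard part will be precisely this last step: the variational machinery of Theorem~\ref{thm:fraschoenexistence} and Proposition~\ref{prop:fraschoencritmetric} delivers a free boundary minimal \emph{immersion} essentially for free, whereas controlling the codimension and, above all, ruling out self-intersections requires the finer Fraser–Schoen analysis of genus-zero free boundary surfaces rather than soft variational arguments.
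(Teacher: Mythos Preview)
Your proposal is correct and follows essentially the same route as the paper (and the underlying Fraser--Schoen argument it surveys). The paper does not give an independent proof of the Corollary; it is stated as an immediate consequence of Theorem~\ref{thm:fraschoengenus0}, whose content already encodes the outcome of precisely the chain you describe: existence of a maximizing metric (Theorem~\ref{thm:fraschoenexistence}), realization of the maximizer as a free boundary minimal immersion by first eigenfunctions (Proposition~\ref{prop:fraschoencritmetric}), the genus-zero multiplicity bound $m_1\le 3$ forcing the target ball to be $\mathbb{B}^3$, and the Fraser--Schoen embeddedness analysis. You have also correctly identified where the real work lies---the codimension control and especially the passage from immersion to embedding are not soft consequences of the variational setup and require the specific arguments in~\cite{fraschoen2}.
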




\subsection{Geometric bounds in higher dimensions}

In dimensions $n=\mbox{dim}(\Omega)\geq 3$, isoperimetric inequalities for Steklov eigenvalues are
more complicated, as they involve other geometric quantities, such as the isoperimetric ratio:
$$I(\Omega)=\frac{|M|}{|\Omega|^{\frac{n-1}{n}}}.$$
For the first nonzero eigenvalue $\sigma_1$, it is possible to obtain
upper bounds for general compact manifolds with
boundary in terms of $I(\Omega)$ and of the relative conformal volume, which is defined below.
Let $\Omega$ be a compact manifold of dimension $n$ with smooth 
boundary $M$. Let $m\in\mathbb{N}$ be a positive integer. The
\emph{relative $m$-conformal volume} of $\Omega$ is
\begin{gather*}
  V_{rc}(\Omega,m)=\inf_{\phi:\Omega\hookrightarrow B^m}\sup_{\gamma\in M(m)}\mbox{Vol}(\gamma\circ\phi(\Omega)),
\end{gather*}
where the infimum is over all conformal immersions
$\phi:\Omega\hookrightarrow\mathbb{B}^m$ such that
$\phi(M)\subset\partial\mathbb{B}^m$, and $M(m)$ is the group of
conformal diffeomorphisms of the ball. 
This conformal invariant was introduced in ~\cite{fraschoen}. It is similar to the celebrated conformal volume of
P. Li and S.-T. Yau~\cite{liyau}. 
\begin{theorem}\cite{fraschoen}
  Let $\Omega$ be a compact Riemannian manifold of dimension $n$
  with smooth boundary $M$. For each positive integer $m$, the
  following holds:
  \begin{gather}\label{IneqFSconf}
    \sigma_1(\Omega)|M|^{\frac{1}{n-1}}\leq \frac{nV_{rc}(\Omega,m)^{2/n}}{I(\Omega)^{\frac{n-2}{n-1}}}.
  \end{gather}
  In case of equality, there exists a conformal harmonic map
  $\phi:\Omega\rightarrow\mathbb{B}^m$ which is a homothety on $M=\partial\Omega$ and such that $\phi(\Omega)$ meets $\partial B^m$
  orthogonally. If $n\geq 3$, then $\phi$ is is an isometric minimal
  immersion of $\Omega$ and it is given by a subspace of the first
  eigenspace.
\end{theorem}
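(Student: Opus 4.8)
The plan is to adapt the conformal-volume method of P.~Li and S.-T.~Yau~\cite{liyau}, in the spirit of Hersch and of Weinstock~\cite{wein}, using the components of a balanced conformal immersion as trial functions in the variational characterization~\eqref{varcharsigmak} of $\sigma_1$. First I would fix an arbitrary conformal immersion $\phi:\Omega\hookrightarrow\mathbb{B}^m$ with $\phi(M)\subset\mathbb{S}^{m-1}=\partial\mathbb{B}^m$ and renormalize it by a \emph{center of mass} argument. Pushing the boundary measure $dS$ forward by $\phi|_M$ gives a measure on $\mathbb{S}^{m-1}$, and the claim is that there is a conformal automorphism $\gamma\in M(m)$ of the ball such that $\psi:=\gamma\circ\phi$ has vanishing boundary averages,
\begin{gather*}
  \int_M\psi_i\,dS=0,\qquad i=1,\dots,m.
\end{gather*}
This is the higher-dimensional Hersch lemma: parametrising by the ``attracting'' point $a\in\mathbb{B}^m$ of $\gamma$, one checks that the resulting center of mass depends continuously on $a$ and, because the pushed-forward measure concentrates near an antipode as $a\to\partial\mathbb{B}^m$, a degree-theoretic (topological fixed-point) argument produces a zero. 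I expect this balancing step to be \textbf{the main obstacle}, since it hinges on a careful analysis of how the center of mass behaves as the conformal parameter degenerates to $\partial\mathbb{B}^m$.

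With $\psi$ balanced, each component $\psi_i$ is admissible in~\eqref{varcharsigmak}, so that $\sigma_1\int_M\psi_i^2\,dS\le\int_\Omega|\nabla\psi_i|^2\,dA$. Since Möbius maps preserve $\mathbb{S}^{m-1}$, we have $\psi(M)\subset\mathbb{S}^{m-1}$, whence $\sum_i\psi_i^2\equiv1$ on $M$; summing over $i$ gives
\begin{gather*}
  \sigma_1\,|M|\le\int_\Omega|\nabla\psi|^2\,dA.
\end{gather*}
Next I would invoke conformality: writing $\mu$ for the conformal factor of $\psi$, an orthonormal frame is sent to an orthogonal frame of common length $\mu$, so $|\nabla\psi|^2=n\mu^2$ while the $n$-dimensional Jacobian equals $\mu^n$; hence $\int_\Omega|\nabla\psi|^2=n\int_\Omega\mu^2$ and $\int_\Omega\mu^n=\vol(\psi(\Omega))$. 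Hölder's inequality with exponents $n/2$ and $n/(n-2)$ then gives $\int_\Omega\mu^2\le\vol(\psi(\Omega))^{2/n}\,|\Omega|^{(n-2)/n}$, so that
\begin{gather*}
  \sigma_1\,|M|\le n\,\vol(\psi(\Omega))^{2/n}\,|\Omega|^{(n-2)/n}.
\end{gather*}

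Bounding $\vol(\psi(\Omega))=\vol(\gamma\circ\phi(\Omega))$ by $\sup_{\gamma\in M(m)}\vol(\gamma\circ\phi(\Omega))$ and taking the infimum over all admissible $\phi$ replaces the volume by $V_{rc}(\Omega,m)$. Finally I would substitute $|\Omega|^{(n-1)/n}=|M|/I(\Omega)$, so that $|\Omega|^{(n-2)/n}=|M|^{(n-2)/(n-1)}I(\Omega)^{-(n-2)/(n-1)}$, and divide both sides by $|M|^{(n-2)/(n-1)}$; since $1-\tfrac{n-2}{n-1}=\tfrac{1}{n-1}$, the left-hand side becomes $\sigma_1|M|^{1/(n-1)}$, yielding exactly~\eqref{IneqFSconf}.

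For the equality case I would trace back where each estimate is tight. Equality in the variational step forces every $\psi_i$ to be a genuine $\sigma_1$-eigenfunction, i.e. $\psi$ is harmonic with $\partial_\nu\psi=\sigma_1\psi$ on $M$; combined with $|\psi|\equiv1$ on $M$ this says the normal derivative is radial, so $\psi(\Omega)$ meets $\partial\mathbb{B}^m$ orthogonally and $\psi$ is a conformal harmonic map that is a homothety on $M$. When $n\ge3$, equality in Hölder additionally forces the conformal factor $\mu$ to be constant, upgrading $\psi$ to an isometric minimal immersion whose components lie in the first eigenspace; when $n=2$ the Hölder step is vacuous, consistent with the conformal invariance of the Dirichlet energy in that dimension.
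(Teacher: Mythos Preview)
Your proposal is correct and follows precisely the approach the paper indicates: the paper's entire proof sketch reads ``The proof uses coordinate functions as test functions and is based on the Hersch center of mass renormalization procedure. It is similar to the proof of the Li-Yau inequality~\cite{liyau},'' and you have faithfully expanded exactly this outline---balance a conformal immersion into $\mathbb{B}^m$ via a M\"obius automorphism, use the balanced coordinates as trial functions in~\eqref{varcharsigmak}, sum and apply H\"older, then absorb the volume term into $V_{rc}$ and rewrite with the isoperimetric ratio. Your identification of the Hersch balancing step as the main technical point and your equality-case analysis are likewise in line with the Li--Yau/Fraser--Schoen argument the paper is citing.
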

The proof uses coordinate functions as test functions and is based on
the Hersch center of mass renormalization procedure. It is similar to
the proof of the Li-Yau inequality~\cite{liyau}.

For higher eigenvalues, the following upper bound for bounded domains was proved by 
B. Colbois, A. El Soufi and the first author in~\cite{ceg2}.
\begin{theorem}\label{CEGconf}
  Let $N$ be a Riemannian manifold of dimension $n$. If $N$ is
  conformally equivalent to a complete Riemannian manifold with
  non-negative Ricci curvature, then for each domain $\Omega\subset
  N$, the following holds for each $k\geq 1$,
  \begin{gather}\label{IneqCEGconf}
    \sigma_k(\Omega)|M|^{\frac{1}{n-1}}\leq\frac{\alpha(n)}{I(\Omega)^{\frac{n-2}{n-1}}}k^{2/n}.
  \end{gather}
  where $\alpha(n)$ is a constant depending only $n$.
\end{theorem}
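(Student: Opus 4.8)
The plan is to bound $\sigma_k$ from above through the variational characterization~\eqref{varcharsigmak}: it suffices to produce $k+1$ functions on $\Omega$ with \emph{pairwise disjoint supports} whose Rayleigh quotients $R(u)=\int_\Omega|\nabla u|^2\,dV_g\big/\int_M u^2\,dS_g$ are all controlled, since for disjointly supported $u_i$ one has $R(\sum c_iu_i)\le\max_i R(u_i)$, and the span of $u_0,\dots,u_k$ contains a $k$-dimensional subspace orthogonal to constants on $M$, whence $\sigma_k\le\max_i R(u_i)$. Writing out $I(\Omega)=|M|/|\Omega|^{(n-1)/n}$, the target~\eqref{IneqCEGconf} is equivalent to the cleaner inequality
\begin{equation*}
\sigma_k(\Omega)\,|M|\le\alpha(n)\,|\Omega|^{\frac{n-2}{n}}\,k^{2/n},
\end{equation*}
so I would aim for test functions with $\int_M u_i^2\,dS_g\gtrsim|M|/k$ and $\int_\Omega|\nabla u_i|^2\,dV_g\lesssim(|\Omega|/k)^{(n-2)/n}$.

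Let $g_0$ be a complete metric with $\mathrm{Ric}(g_0)\ge0$ conformal to $g$, say $g=e^{2\phi}g_0$. By Bishop--Gromov, $(N,d_{g_0})$ is volume-doubling and satisfies the covering hypothesis of the Grigor'yan--Netrusov--Yau decomposition~\cite{gyn} with a constant depending only on $n$. Applying that decomposition to the finite boundary measure $\nu=dS_g$, with respect to the $g_0$-distance, yields for a suitable multiple of $k$ a family of $g_0$-annuli $A_i=A(a_i,r_i,2r_i)$ whose dilations $2A_i$ are pairwise disjoint and satisfy $\nu(A_i)\ge c(n)\,|M|/k$. To each $A_i$ I associate the tent function $u_i$, a function of $d_{g_0}(a_i,\cdot)$ equal to $1$ on $A_i$, supported in $2A_i$, with $|\nabla u_i|_{g_0}\le C/r_i$. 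The denominator is then immediate: $\int_M u_i^2\,dS_g\ge\nu(A_i)\ge c(n)|M|/k$.

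For the numerator the conformal change gives $|\nabla u_i|_g^2\,dV_g=e^{(n-2)\phi}|\nabla u_i|_{g_0}^2\,dV_{g_0}$, and Hölder's inequality with exponents $\tfrac{n}{n-2}$ and $\tfrac n2$ yields
\begin{equation*}
\int_{2A_i\cap\Omega}e^{(n-2)\phi}\,dV_{g_0}\le\Big(\int_{2A_i\cap\Omega}e^{n\phi}\,dV_{g_0}\Big)^{\frac{n-2}{n}}\vol_{g_0}(2A_i)^{2/n}=\vol_g(2A_i\cap\Omega)^{\frac{n-2}{n}}\vol_{g_0}(2A_i)^{2/n}.
\end{equation*}
Since $\mathrm{Ric}(g_0)\ge0$, Bishop--Gromov bounds $\vol_{g_0}(2A_i)\le C(n)r_i^n$, so the factor $1/r_i^2$ coming from $|\nabla u_i|_{g_0}^2$ is \emph{exactly absorbed}, leaving $\int_\Omega|\nabla u_i|^2\,dV_g\le C(n)\,\vol_g(2A_i\cap\Omega)^{(n-2)/n}$. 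The decisive point is that the disjointness of the $2A_i$ forces $\sum_i\vol_g(2A_i\cap\Omega)\le|\Omega|$; hence, running the decomposition with a large enough multiple of $k$ annuli and discarding those whose $g$-volume exceeds a fixed multiple of $|\Omega|/k$, one still retains at least $k+1$ annuli that are \emph{simultaneously} measure-good ($\nu(A_i)\gtrsim|M|/k$) and volume-good ($\vol_g(2A_i\cap\Omega)\lesssim|\Omega|/k$). For these, $\int_\Omega|\nabla u_i|^2\,dV_g\le C(n)(|\Omega|/k)^{(n-2)/n}$ and therefore $R(u_i)\le\alpha(n)\,|\Omega|^{(n-2)/n}\,k^{2/n}/|M|$, which is the desired bound.

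The main obstacle is the Grigor'yan--Netrusov--Yau decomposition itself, whose multiscale construction of disjoint annuli each carrying a definite share of a prescribed measure is the genuine technical core; once it is available, the conformal factor is disposed of cleanly by Hölder's inequality reducing matters to the Bishop--Gromov comparison. I would stress that the sharp exponent $k^{2/n}$ (rather than the $k^{1}$ that results from the crude estimate $\vol_g(2A_i\cap\Omega)\le|\Omega|$) arises precisely from exploiting disjointness to select annuli of $g$-volume $\lesssim|\Omega|/k$; the one step demanding care is coordinating the two measures — the boundary area $dS_g$ controlling the denominator and the interior volume $dV_g$ controlling the numerator — at a common scale dictated by the $g_0$-geometry.
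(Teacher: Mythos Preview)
Your proposal is correct and follows precisely the approach the paper alludes to: the metric-geometry method of Korevaar~\cite{kvr} as developed by Grigor'yan--Netrusov--Yau~\cite{gyn}, which is exactly the route taken in the original reference~\cite{ceg2}. The key ingredients you identify --- the GNY disjoint-annuli decomposition applied to the boundary measure $dS_g$ in the $g_0$-geometry, the H\"older trick to absorb the conformal factor $e^{(n-2)\phi}$ via Bishop--Gromov, and the pigeonhole selection of annuli with $g$-volume $\lesssim |\Omega|/k$ to obtain the sharp exponent $k^{2/n}$ --- are precisely those of the published proof.
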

The proof of Theorem~\ref{CEGconf} is based on the methods of  metric geometry
initiated in~\cite{kvr} and further developed in~\cite{gyn}.
In combination with the classical
isoperimetric inequality, Theorem~\ref{CEGconf} leads to the following corollary.
\begin{cor}\label{Cor:boundedeuclidean}
  There exists a constant $C_n$ such that for any Euclidean domain
  $\Omega\subset\mathbb{R}^{n}$ 
  $$\sigma_k(\Omega)|\partial\Omega|^{\frac{1}{n-1}}\leq C_n k^{2/n}.$$
\end{cor}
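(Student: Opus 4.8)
The plan is to deduce the corollary directly from Theorem~\ref{CEGconf} by specializing the ambient manifold to $N=\mathbb{R}^n$ and then controlling the isoperimetric ratio $I(\Omega)$ by means of the classical isoperimetric inequality.

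First I would verify that Euclidean space fits the hypotheses of Theorem~\ref{CEGconf}. The flat metric on $\mathbb{R}^n$ is complete and has vanishing, hence non-negative, Ricci curvature, so $N=\mathbb{R}^n$ is (trivially) conformally equivalent to a complete Riemannian manifold of non-negative Ricci curvature. Applying Theorem~\ref{CEGconf} to an arbitrary bounded domain $\Omega\subset\mathbb{R}^n$, with $M=\partial\Omega$, therefore yields
\begin{gather*}
  \sigma_k(\Omega)|\partial\Omega|^{\frac{1}{n-1}}\leq\frac{\alpha(n)}{I(\Omega)^{\frac{n-2}{n-1}}}\,k^{2/n}.
\end{gather*}

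The remaining task is to absorb the factor $I(\Omega)^{-\frac{n-2}{n-1}}$ into a purely dimensional constant. The key observation is that the exponent $\frac{n-2}{n-1}$ is non-negative for every $n\geq 2$ (it vanishes when $n=2$). Consequently it suffices to bound $I(\Omega)$ from below, and this is exactly what the classical isoperimetric inequality provides: for every bounded domain $\Omega\subset\mathbb{R}^n$ one has
$$I(\Omega)=\frac{|\partial\Omega|}{|\Omega|^{\frac{n-1}{n}}}\geq n\,\omega_n^{1/n},$$
where $\omega_n=|\mathbb{B}^n|$ is the volume of the unit ball. Since $\tfrac{n-2}{n-1}\geq 0$, raising this to that power and taking reciprocals gives
$$\frac{1}{I(\Omega)^{\frac{n-2}{n-1}}}\leq\bigl(n\,\omega_n^{1/n}\bigr)^{-\frac{n-2}{n-1}}.$$
Combining the two displays yields the claim, with the explicit constant $C_n=\alpha(n)\bigl(n\,\omega_n^{1/n}\bigr)^{-\frac{n-2}{n-1}}$, which depends only on $n$.

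There is no genuine obstacle once Theorem~\ref{CEGconf} is available; the only point demanding care is the sign of the exponent. The isoperimetric ratio $I(\Omega)$ can be made arbitrarily large (for instance for long, thin domains) but it is bounded below, so a lower bound on $I$ can be exploited only when the power of $I^{-1}$ appearing in \eqref{IneqCEGconf} is non-negative. Fortunately this holds for all $n\geq 2$, so the argument proceeds uniformly; in the borderline dimension $n=2$ the isoperimetric factor disappears altogether and the corollary reduces to Theorem~\ref{CEGconf} itself.
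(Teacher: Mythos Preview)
Your argument is correct and is precisely the approach indicated in the paper: apply Theorem~\ref{CEGconf} with $N=\mathbb{R}^n$ and absorb the factor $I(\Omega)^{-(n-2)/(n-1)}$ using the classical isoperimetric lower bound $I(\Omega)\geq n\,\omega_n^{1/n}$. Your remark on the non-negativity of the exponent $\tfrac{n-2}{n-1}$ is the one point that needs checking, and you handle it correctly.
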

Similar results also hold for domains in the hyperbolic space $\mathbb{H}^{n}$ and
in the hemisphere of $\mathbb{S}^{n}$.
An interesting  question raised in~\cite{ceg2}  is whether one can replace the exponent  $2/n$ in Corollary \ref{Cor:boundedeuclidean} by 
 $1/(n-1)$, which should be optimal in view of Weyl's law \eqref{Weylaw}:
\begin{open}
  Does  there exist a constant $C_n$ such that any bounded Euclidean
  domain $\Omega\subset\mathbb{R}^n$ satisfies
  $$\sigma_k(\Omega)|\partial\Omega|^{\frac{1}{n-1}}\leq C_nk^{\frac{1}{n-1}}?$$
\end{open}
While it might be tempting to
think that inequality~(\ref{IneqCEGconf}) should also hold with the exponent
$1/(n-1)$,  this is false since it  would imply a universal upper
bound on the isoperimetric ratio $I(\Omega)$ for Euclidean domains.

\subsection{Lower bounds}

In~\cite{esco1},  J. Escobar proved the following lower bound.
\begin{theorem}
  Let $\Omega$ be a smooth compact Riemannian manifold of
  dimension $\geq 3$ with boundary $M=\partial\Omega$. Suppose that
  the Ricci curvature of $\Omega$
  is non-negative and that the second fundamental form of $M$ is
  bounded below by $k_0>0$, then
  $\sigma_1>k_0/2.$
\end{theorem}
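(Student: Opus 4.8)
The goal is to bound $\sigma_1$ from below under the hypotheses $\mathrm{Ric}_\Omega \ge 0$ and second fundamental form $\mathrm{II} \ge k_0 > 0$. The natural vehicle is the Reilly formula, which relates the interior Hessian energy of a function to boundary terms involving the Dirichlet-to-Neumann data. The plan is to take a first Steklov eigenfunction $u$, so that $\Delta u = 0$ in $\Omega$ and $\partial_\nu u = \sigma_1 u$ on $M$, with $\int_M u\, dS = 0$, and feed it into Reilly's identity to extract an inequality of the form $\sigma_1 \ge (\text{something involving } k_0)$.

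\textbf{Key steps.} First I would recall the Reilly formula for a smooth function $u$ on $\Omega$:
\begin{gather*}
  \int_\Omega \left((\Delta u)^2 - |\nabla^2 u|^2\right) dV
  = \int_\Omega \mathrm{Ric}(\nabla u, \nabla u)\, dV
  + \int_M \left( 2 u_\nu \Delta_M u + H u_\nu^2 + \mathrm{II}(\nabla_M u, \nabla_M u)\right) dS,
\end{gather*}
where $u_\nu = \partial_\nu u$, $\Delta_M$ is the boundary Laplacian, $\nabla_M u$ the tangential gradient, and $H$ the mean curvature. Since $u$ is harmonic, the left side is $-\int_\Omega |\nabla^2 u|^2 \le 0$, and since $\mathrm{Ric}_\Omega \ge 0$ the first term on the right is nonnegative. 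Substituting the Steklov boundary condition $u_\nu = \sigma_1 u$ and integrating the cross term by parts on the closed manifold $M$ gives $\int_M 2 u_\nu \Delta_M u\, dS = 2\sigma_1 \int_M u\, \Delta_M u\, dS = -2\sigma_1 \int_M |\nabla_M u|^2\, dS$. The curvature hypotheses then yield the estimate
\begin{gather*}
  0 \ge -2\sigma_1 \int_M |\nabla_M u|^2\, dS + k_0 \int_M |\nabla_M u|^2\, dS,
\end{gather*}
after discarding the nonnegative terms $H u_\nu^2$ (one checks $H>0$ follows from $\mathrm{II} \ge k_0 > 0$) and $\mathrm{II}(\nabla_M u,\nabla_M u) \ge k_0 |\nabla_M u|^2$. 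Provided $\nabla_M u \not\equiv 0$, this rearranges to $2\sigma_1 \ge k_0$, and I would sharpen the constant by using the Steklov condition more carefully — essentially one wants to compare $\int_M |\nabla_M u|^2$ against $\sigma_1 \int_M u_\nu u$ via the harmonic extension, producing the stated strict bound $\sigma_1 > k_0/2$.

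\textbf{Main obstacle.} The delicate point is handling the term $\int_M |\nabla^2 u|^2$ to obtain a \emph{strict} inequality rather than $\sigma_1 \ge k_0/2$, and ruling out the degenerate case $\nabla_M u \equiv 0$. If the tangential gradient vanished identically, $u$ would be constant on each boundary component, which together with $\int_M u\, dS = 0$ and harmonicity would force a contradiction with $u$ being a genuine first eigenfunction; I would need to argue this case out. The strictness should come from the fact that equality in $-\int_\Omega |\nabla^2 u|^2 \le 0$ forces $\nabla^2 u \equiv 0$, i.e. $u$ affine, which is incompatible with the boundary geometry when $\mathrm{II} \ge k_0 > 0$ (a totally geodesic boundary is excluded). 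Managing this rigidity analysis cleanly, rather than the Reilly computation itself, is where the real work lies.
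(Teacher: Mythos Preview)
Your approach is exactly what the paper has in mind: it states only that ``the proof is a simple application of Reilly's formula,'' and your computation is that application. One small streamlining: do not discard the term $\int_M H\,u_\nu^2\,dS = \sigma_1^2\int_M H\,u^2\,dS$. Keeping it yields
\[
(2\sigma_1 - k_0)\int_M |\nabla_M u|^2\,dS \;\ge\; \sigma_1^2\int_M H\,u^2\,dS \;>\;0,
\]
since $H\ge (n-1)k_0>0$ and $u|_M\not\equiv 0$; this gives both $\nabla_M u\not\equiv 0$ and the \emph{strict} inequality $\sigma_1>k_0/2$ in one stroke, so the rigidity analysis you flag as the ``main obstacle'' is unnecessary.
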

The proof is a simple application of Reilly's formula. 
In~\cite{esco2}, Escobar conjectured the stronger bound
$\sigma_1\geq k_0$, which he proved for surfaces. For convex planar domains,
this had already been proved by Payne~\cite{payne}. Earlier lower
bounds for convex and starshaped planar domains are due to Kuttler and
Sigillito~\cite{kuttsigi2,kuttsigi3}. 

In more general situations (e.g. no convexity assumption), it is still
possible to bound the first eigenvalue from below, similarly to
the classical Cheeger inequality. The classical Cheeger constant associated
to a compact Riemannian manifold $\Omega$ with boundary
$M=\partial\Omega$ is defined by 
\begin{gather*}
  h_{c}(\Omega):=\inf_{|A|\leq\frac{|\Omega|}{2}}\frac{|\partial A\cap
    \mbox{int }\Omega|}{|A|}.
\end{gather*}
where the infimum is over all Borel subsets of $\Omega$ such that
$|A|\leq |\Omega|/2$.
In~\cite{jammes1} P.~Jammes introduced the following Cheeger type
constant for the Steklov problem:
\begin{gather*}
  h_{\mbox{j}}(\Omega):=\inf_{|A|\leq\frac{|\Omega|}{2}}\frac{|\partial A\cap
    \mbox{int }\Omega|}{|A\cap\partial\Omega|}.
\end{gather*}
He proved the following lower bound.
\begin{theorem}\label{ThmCheegerJammes}
  Let $\Omega$ be a smooth compact Riemannian manifold with boundary
  $M=\partial\Omega$. Then
  \begin{gather}\label{InequalityCheegerJammes}
    \sigma_1(\Omega)\geq \frac{1}{4}h_c(\Omega)h_j(\Omega)
  \end{gather}
\end{theorem}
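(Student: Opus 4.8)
The plan is to adapt Cheeger's classical dumbbell argument to the Steklov Rayleigh quotient $R(u)=\int_\Omega|\nabla u|^2\,dA\big/\int_M u^2\,dS$, using the two isoperimetric constants to control different quantities: $h_j$ relates the ``interior perimeter'' of a superlevel set to its boundary $L^2$-mass, while $h_c$ relates it to its interior $L^2$-mass. The product structure $\tfrac14 h_c h_j$ will emerge from combining the two estimates through a Cauchy--Schwarz step.

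First I would reduce to a nonnegative test function supported on at most half of $\Omega$. Let $u$ be a first eigenfunction, so that $\int_M u\,dS=0$ and $\int_\Omega|\nabla u|^2\,dA=\sigma_1\int_M u^2\,dS$. Choosing $c$ to be a median of $u$ for the volume measure on $\Omega$ (so that $|\{u>c\}|$ and $|\{u<c\}|$ are both at most $|\Omega|/2$) and setting $f=u-c$, the numerator of $R$ is unchanged while the denominator can only grow, since $\int_M f^2\,dS=\int_M u^2\,dS+c^2|M|$; hence $R(f)\le\sigma_1$. Splitting $f=f_+-f_-$ into positive and negative parts, the numerators and denominators of $R$ each add (the gradients have disjoint supports and $f_+f_-=0$ on $M$), so by the mediant inequality $\sigma_1\ge R(f)\ge\min\bigl(R(f_+),R(f_-)\bigr)$. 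It therefore suffices to bound $R(g)$ from below for a single nonnegative $g\in\{f_+,f_-\}$ with $|\{g>0\}|\le|\Omega|/2$.

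The heart of the argument is the co-area formula applied to $g^2$. Writing $A_t=\{g^2>t\}$, the co-area formula gives $\int_\Omega|\nabla(g^2)|\,dA=\int_0^\infty|\partial A_t\cap\mathrm{int}\,\Omega|\,dt$. Since $|A_t|\le|\{g>0\}|\le|\Omega|/2$, applying the two constants level set by level set yields
\[\int_\Omega|\nabla(g^2)|\,dA\ \ge\ h_c\int_0^\infty|A_t|\,dt=h_c\int_\Omega g^2\,dA\]
from the Cheeger constant, and, using the layer-cake representation of $\int_M g^2\,dS$ together with the Jammes constant,
\[\int_\Omega|\nabla(g^2)|\,dA\ \ge\ h_j\int_0^\infty|A_t\cap\partial\Omega|\,dt=h_j\int_M g^2\,dS.\]
On the other hand, Cauchy--Schwarz gives $\int_\Omega|\nabla(g^2)|\,dA=2\int_\Omega g|\nabla g|\,dA\le 2\bigl(\int_\Omega g^2\bigr)^{1/2}\bigl(\int_\Omega|\nabla g|^2\bigr)^{1/2}$. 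Combining this with the Cheeger estimate $\int_\Omega g^2\le h_c^{-1}\int_\Omega|\nabla(g^2)|$ and solving for $\int_\Omega|\nabla(g^2)|$ produces $\int_\Omega|\nabla(g^2)|\le \tfrac{4}{h_c}\int_\Omega|\nabla g|^2$ (the degenerate case $\int_\Omega|\nabla(g^2)|=0$ forces $g\equiv0$ and is trivial). Feeding this into the Jammes estimate gives $h_j\int_M g^2\le \tfrac4{h_c}\int_\Omega|\nabla g|^2$, that is $R(g)\ge\tfrac14 h_c h_j$, and hence $\sigma_1\ge\tfrac14 h_c h_j$.

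The step I expect to require the most care is the co-area bookkeeping: one must justify the co-area formula for the merely Sobolev function $g$, use that almost every level set is rectifiable, and correctly decompose each level set $\partial A_t$ into its interior part (which is exactly what both $h_c$ and $h_j$ bound from below) and its part lying on $\partial\Omega$ (which feeds the boundary layer-cake identity for $\int_M g^2$). Matching the trace of the level sets of $g$ on $M$ with $A_t\cap\partial\Omega$, and handling the possible degenerate case where one of $f_\pm$ has vanishing trace on $M$ (in which case one simply discards that term since it only enlarges the numerator), are the routine-but-delicate points. Everything else reduces to plugging the two isoperimetric inequalities into a single Cauchy--Schwarz estimate.
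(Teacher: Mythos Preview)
Your argument is correct and is precisely the approach indicated in the paper: the coarea formula applied to $g^2$, combined with Cauchy--Schwarz, exactly as in the classical Cheeger inequality, with the Jammes constant $h_j$ entering through the boundary layer-cake identity to convert the co-area integral into a lower bound for $\int_M g^2$. The only subtlety worth noting is that Steklov eigenfunctions on a smooth manifold are smooth up to the boundary by elliptic regularity, so the co-area bookkeeping and the identification of $A_t\cap\partial\Omega$ with the superlevel sets of the trace go through without the Sobolev-theoretic delicacies you flag.
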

The proof of this theorem uses the coarea formula and follows the proof of the classical Cheeger
inequality quite closely.
Previous lower bounds were also obtained in~\cite{esco1} in terms of
a related Cheeger type constant and of the first eigenvalue of a Robin
problem on $\Omega$.

\subsection{Surfaces with large Steklov eigenvalues}
The previous discussion immediately raises the question of whether
there exist  surfaces with an arbitrarily large normalized first Steklov
eigenvalue.  The question was settled by the  first author and
B. Colbois in~\cite{colbgir1}.
\begin{theorem}\label{Theorem:CGLARGE}
  There exists a sequence $\{\Omega_N\}_{N\in\mathbb{N}}$ of compact surfaces
  with boundary and a constant $C>0$ such that for each $N\in\mathbb{N}$,
  $\mbox{genus}(\Omega_N)=1+N,$ and 
  $$\sigma_1(\Omega_N)L(\partial\Omega_N)\geq CN.$$
\end{theorem}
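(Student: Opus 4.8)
The plan is to realize the surfaces $\Omega_N$ as \emph{fattened expander graphs}: I would build them by gluing fixed geometric building blocks together according to a family of graphs with a uniform spectral gap, and then transfer that discrete spectral gap to a lower bound on $\sigma_1$. The point is that the normalized eigenvalue $\sigma_1 L$ is controlled from below by the product (spectral gap of the graph) $\times$ (boundary length), so if I can keep $\sigma_1$ bounded below by a constant while letting $L$ grow linearly in the number of vertices, the product grows linearly, as required. Since the upper bound \eqref{InequalityHPSgenusSimple} reads $\sigma_1 L \leq 2\pi(\gamma+l)$, a lower bound linear in the genus is the strongest one can hope for, so the construction must make $\sigma_1$ \emph{not} decay as the genus grows.

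First I would fix a family of connected $3$-regular graphs $G_N$ on a growing number $V$ of vertices whose combinatorial Laplacian spectral gap $\lambda(G_N)$ is bounded below by a constant $\lambda_0>0$ independent of $N$; such expander families are classical. To each $G_N$ I attach a surface: every vertex is replaced by one fixed building block, a sphere with $d+1$ boundary circles (here $d=3$), of which $d$ circles are reserved for gluing and one is left \emph{free}; every edge is replaced by a fixed thin cylinder joining the corresponding gluing circles of its two endpoints. The free circles, one per vertex, together form $M=\partial\Omega_N$, so $L(\partial\Omega_N)$ equals $V$ times the fixed circle length and grows linearly in $V$. An Euler-characteristic count (each block is a sphere with $d(v)+1$ holes, each cylinder has vanishing characteristic, and gluing is along circles) shows that the genus equals the first Betti number $\beta_1(G_N)=E-V+1$ of the graph; taking a $3$-regular graph on $V=2N$ vertices gives $\beta_1=3N-2N+1=N+1$, so that $\mathrm{genus}(\Omega_N)=1+N$ exactly.

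The heart of the matter is the uniform lower bound $\sigma_1(\Omega_N)\geq c>0$. Using the variational characterization \eqref{varcharsigmak}, I take an admissible function $u$ (harmonic, with $\int_M u\,dS=0$) and average it over the free circle of each block to produce a function $\bar u$ on the vertex set of $G_N$. The zero-boundary-average condition forces $\bar u$ to have nearly vanishing mean, and the Dirichlet energy of $u$ along the connecting cylinders controls the discrete energy $\sum_{v\sim w}(\bar u(v)-\bar u(w))^2$. The expander estimate then yields the discrete Poincaré inequality $\sum_v \bar u(v)^2 \leq \lambda_0^{-1}\sum_{v\sim w}(\bar u(v)-\bar u(w))^2$, which in turn bounds $\int_M u^2\,dS$ from above by a constant times $\int_\Omega|\nabla u|^2\,dA$, giving the lower bound on the Rayleigh quotient.

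The main obstacle is that $u$ need not be close to constant on each block, so the passage from the continuous Rayleigh quotient to the graph quotient is not automatic. To handle this I would argue by a quantitative \emph{dichotomy}: either $u$ oscillates appreciably within some block, in which case a local Poincaré inequality on the fixed block geometry already forces $\int_\Omega|\nabla u|^2\,dA$ to dominate the relevant part of $\int_M u^2\,dS$; or $u$ is close to its average $\bar u(v)$ on every block, in which case the reduction above applies and the expander gap closes the estimate. The crucial feature making all constants independent of $N$ is that the blocks and cylinders have fixed geometry, so every local inequality used is uniform; only the combinatorics of $G_N$ varies. Making this dichotomy rigorous and tracking that the thin cylinders do not degrade the constants is the technical core of the argument.
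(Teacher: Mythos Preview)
Your proposal is correct and follows precisely the approach indicated in the paper, which (being a survey) gives only the one-line hint ``The proof is based on the construction of surfaces which are modelled on a family of expander graphs'' and refers to \cite{colbgir1} for details. Your sketch---replacing vertices of a $3$-regular expander by fixed blocks with one free boundary circle, replacing edges by cylinders, computing the genus via $\beta_1(G_N)$, and transferring the discrete spectral gap to a uniform lower bound on $\sigma_1$ through a local/global dichotomy---is exactly the Colbois--Girouard strategy the paper alludes to, with the Euler-characteristic bookkeeping done correctly.
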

The proof is based on the construction of surfaces which are modelled on
a family of expander graphs.

\begin{remark}
The literature on geometric bounds for Steklov eigenvalues is
expanding rather fast. There is some interest in considering the maximization of various
functions of the Steklov  eigenvalues. See \cite{dittmar1,edward2,HenPhil}.
In the framework of comparison geometry, $\sigma_1$ was studied is~\cite{esco3} and more recently in~\cite{BinSan}. 
For submanifolds of $\mathbb{R}^n$, upper bounds involving the mean
curvatures of $M=\partial\Omega$ have been obtained in~\cite{IliasMakhoul}. Higher eigenvalues on annuli have
been studied in~\cite{FanTamYu}.
\end{remark}


\section{Isospectrality and spectral rigidity}\label{section:IsospectralityRigidity}
\label{isosp}
\subsection{Isospectrality and the Steklov problem}
Adapting the celebrated question of M. Kac ``Can one hear the shape of a drum?''  to the Steklov problem, one may ask:
\begin{open}
\label{isospdomains}
Do there exist planar domains which are not isometric and have the same Steklov spectrum?
\end{open}
We believe the answer to this question is negative. Moreover, the
problem can be viewed as a special case of a conjecture put forward in
\cite{JolSharaf}: two surfaces have the same Steklov spectrum if and
only if there exists a conformal mapping between them such that the
conformal factor on the boundary is identically equal to one.  Note
that the ``if'' part immediately follows from the variational principle
\eqref{varcharsigmak}. Indeed, the numerator of the Rayleigh quotient
for Steklov eigenvalues is the usual Dirichlet energy, which is
invariant under conformal transformations in two dimensions. The
denominator also stays the same if the conformal factor is equal to
one on the boundary. Therefore, the Steklov spectra of such
conformally equivalent surfaces coincide.

In higher dimensions, the Dirichlet energy is not conformally
invariant, and therefore the approach described above does not
work. However, one can construct Steklov isospectral manifolds of
dimension $ n\ge 3$ with the help of Example~\ref{cyl}. Indeed, given
two compact manifolds $M_1$ and $M_2$ which are Laplace-Beltrami
isospectral (there are many known examples of such pairs, see, for instance,  \cite{Buser1986, Sunada,GordonPerrySchueth}), consider two cylinders  $\Omega_1=M_1 \times
[0,L]$ and $\Omega_2 = M_2 \times [0,L]$, $L>0$. It follows from
Example~\ref{cyl} that $\Omega_1$ and $\Omega_2$ have the same Steklov
spectra. Recently, examples of higher-dimesional Steklov isospectral
manifolds with connected boundaries were announced
in~\cite{GordHerbWebb}.

In all known constructions of Steklov isospectral manifolds,
their boundaries are Laplace isospectral.  The following question was
asked in \cite{GPPS}:
\begin{open}
  Do there exist Steklov isospectral manifolds such that their
  boundaries are not Laplace isospectral?
 \end{open}

\subsection{Rigidity of the Steklov spectrum: the case of a ball} It
is an interesting and challenging question to find examples of
manifolds with boundary that are uniquely determined by their Steklov
spectrum. In this subsection we discuss the seemingly simple example
of Euclidean balls.
\begin{proposition}
  A disk is uniquely determined by its Steklov spectrum among all smooth
  Euclidean domains.
\end{proposition}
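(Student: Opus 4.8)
The plan is to show that the Steklov spectrum of a disk pins down, first, the dimension and boundary length, and then forces simple-connectedness together with the exact eigenvalue sequence of the disk, which by an appropriate sharp inequality can only be achieved by the disk itself. First I would apply Corollary~\ref{cor1}: the Steklov spectrum determines $\dim\Omega=n$ and $\vol(M)=\vol(\partial\Omega)$. Since the disk is two-dimensional, any smooth Euclidean domain $\Omega$ Steklov-isospectral to the unit disk $\mathbb{D}$ must be a planar domain ($n=2$) with boundary length $L(\partial\Omega)=2\pi$. So the problem reduces to: among smooth bounded planar domains of perimeter $2\pi$, only the disk has the Steklov spectrum $0,1,1,2,2,3,3,\dots$.

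Next I would determine the topology of $\Omega$ from the spectrum. By Theorem~\ref{xx}, the Steklov spectrum of a smooth compact surface determines the number $k$ of boundary components and their lengths; in particular applying the formula $\ell_1 = 2\pi/\limsup_{j\to\infty}(\sigma_{j+1}-\sigma_j)$ to the disk's spectrum, where consecutive gaps are $0,0,1,0,1,0,\dots$ so $\limsup(\sigma_{j+1}-\sigma_j)=1$, yields $\ell_1=2\pi$. Since the total boundary length is also $2\pi$, there can be only $k=1$ boundary component. Thus $\Omega$ is a smooth planar domain with connected boundary, hence simply connected. This is the key structural reduction: knowing $\Omega$ is simply connected lets me invoke Weinstock's inequality.

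With $\Omega$ simply connected of perimeter $2\pi$, I would invoke Weinstock's inequality~\eqref{Inequality:Weinstock}: $\sigma_1(\Omega)\,L(\partial\Omega)\le 2\pi$, with equality if and only if $\Omega$ is a disk. For the isospectral domain we have $\sigma_1(\Omega)=\sigma_1(\mathbb{D})=1$ and $L(\partial\Omega)=2\pi$, so $\sigma_1(\Omega)L(\partial\Omega)=2\pi$; equality holds, and the rigidity clause of Weinstock's inequality forces $\Omega$ to be a disk. Since the perimeter is $2\pi$, it is exactly the unit disk, completing the argument.

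The main obstacle I anticipate is the topological step, i.e.\ rigorously extracting simple-connectedness from the spectrum. One must be careful that Theorem~\ref{xx} is stated for smooth compact surfaces and correctly handle the $\limsup$ computation on the disk's arithmetic progression of gaps; a cleaner alternative, avoiding the full strength of Theorem~\ref{xx}, is to note that the genus--plus--boundary-components bound~\eqref{InequalityHPSgenusSimple}, $\sigma_k L \le 2\pi(\gamma+l)k$, combined with the disk's values $\sigma_1=1$, $L=2\pi$ gives $2\pi \le 2\pi(\gamma+l)$, which is consistent with any topology and so is \emph{not} by itself enough — this is exactly why one needs the finer information in Theorem~\ref{xx} (lengths of boundary components) rather than a crude eigenvalue bound to rule out, say, an annulus whose $\sigma_1$ happens to equal $1$. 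Once connectedness of the boundary is secured, the remainder is immediate from Weinstock.
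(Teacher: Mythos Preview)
Your proof is correct and follows essentially the same route as the paper: use Corollary~\ref{cor1} to pin down the dimension and perimeter, invoke Theorem~\ref{xx} to recover the number of boundary components and hence simple-connectedness, and then apply the equality case of Weinstock's inequality~\eqref{Inequality:Weinstock}. The only differences are cosmetic (you normalize to the unit disk, and you spell out the $\limsup$ computation and the inadequacy of~\eqref{InequalityHPSgenusSimple} for the topological step), but the logical structure is identical.
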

\begin{proof}
  Let $\Omega$ be an Euclidean domain which has the same Steklov
  spectrum as the disk of radius $r$. Then, by Corollary \ref{cor1}
  one immediately deduces that $\Omega$ is a planar domain  of
  perimeter $2\pi r$.  Moreover, it follows from Theorem \ref{xx} that
  $\Omega$ is simply--connected. Therefore, since the equality in
  Weinstock's inequality \eqref{Inequality:Weinstock} is achieved for
  $\Omega$, the domain $\Omega$ is a disk of radius $r$.
\end{proof}
\begin{remark} 
The smoothness hypothesis in the proposition above seems to be purely technical. We have to make this assumption since we make use of  Theorem \ref{xx}. 
\end{remark}
The above result motivates
\begin{open}
  Let $\Omega \subset {\mathbb R}^n$ be a domain which is isospectral to a ball of radius $r$. Show that it is a ball of radius $r$. 
\end{open}
Note that Theorem~\ref{thm:brock} does not yield a solution to this problem because the volume $|\Omega|$ is not a Steklov spectrum invariant.
Using the heat invariants of the Dirichlet-to-Neumann operator (see subsection \ref{specinv}), one can prove the following statement in dimension three. 
\begin{proposition}
  Let $\Omega\subset\mathbb R^3$ be a domain with connected and smooth
  boundary $M$. Suppose its Steklov spectrum is equal to that of
  a ball of radius $r$. Then $\Omega$ is a ball of radius $r$.
\end{proposition}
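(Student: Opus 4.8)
The plan is to extract enough geometric information about $M=\partial\Omega$ from the heat trace \eqref{heatexp} of the Dirichlet-to-Neumann operator to force $M$ to be a round sphere, and then to recover the radius from the boundary area. Three spectral invariants are available, and all three are local for $n=3$ (they are $a_0,a_1,a_2$). First, by Corollary~\ref{cor1} the area $|M|$ equals that of the ball, namely $|M|=4\pi r^2$; note also that $M$, being the smooth connected boundary of a domain in $\mathbb R^3$, is a connected orientable closed surface, so $\chi(M)=2-2g\le 2$ with equality precisely when $g=0$. Second, by Corollary~\ref{cor2} the total mean curvature $\int_M H\,dA$ is a spectral invariant, hence equals its value on the sphere of radius $r$, where $H\equiv 1/r$ and $\int_M H\,dA=4\pi r$. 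Third, the coefficient $a_2$ in \eqref{heatexp} is a spectral invariant and, by \cite{PoltSher}, it is the integral over $M$ of a universal polynomial in the mean curvature, the second fundamental form, the scalar curvature $R_M$, and the ambient curvatures of $\Omega$.

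The key simplification is that $\Omega\subset\mathbb R^3$ is flat: every ambient curvature term ($R_\Omega$ and $\mathrm{Ric}_\Omega(\nu,\nu)$) vanishes, and on a surface $R_M=2K$ with $K=\kappa_1\kappa_2$ the Gauss curvature. Thus $a_2$ collapses to a combination
\[
a_2=A\int_M H^2\,dA+B\int_M K\,dA
\]
for universal constants $A,B$ coming from the symbol computation in \cite{PoltSher}. By the Gauss--Bonnet theorem $\int_M K\,dA=2\pi\chi(M)$, so $a_2=A\int_M H^2\,dA+2\pi B\,\chi(M)$. Comparing with the ball, for which $\int_M H^2\,dA=4\pi$ and $\chi=2$, yields the single scalar identity $A\int_M H^2\,dA+2\pi B\,\chi(M)=4\pi(A+B)$.

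It remains to convert this identity into rigidity. By Cauchy--Schwarz, $\bigl(\int_M H\,dA\bigr)^2\le |M|\int_M H^2\,dA$, so $\int_M H^2\,dA\ge (4\pi r)^2/(4\pi r^2)=4\pi$, with equality if and only if $H$ is constant. Subtracting the ball's value, the $a_2$ identity can be rewritten as
\[
A\left(\int_M H^2\,dA-4\pi\right)=2\pi B\,\bigl(2-\chi(M)\bigr).
\]
Assuming, as the formula of \cite{PoltSher} should give, that $A>0$ and $B<0$, the left-hand side is nonnegative by Cauchy--Schwarz while the right-hand side is nonpositive since $\chi(M)\le 2$; hence both sides vanish. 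Therefore $\int_M H^2\,dA=4\pi$ and $H$ is constant. By Alexandrov's theorem a closed embedded constant mean curvature surface in $\mathbb R^3$ is a round sphere (equivalently, $\int_M H^2\,dA=4\pi$ together with the Willmore rigidity theorem already forces $M$ to be totally umbilic, i.e. a round sphere); its area $4\pi r^2$ then fixes the radius at $r$, so $\Omega$ is the ball of radius $r$.

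The main obstacle is precisely the sign bookkeeping in the last step: one must read off from the explicit $a_2$ formula of \cite{PoltSher} that, after discarding the ambient terms, the coefficient of $\int_M H^2\,dA$ is positive while that of $\int_M K\,dA$ is negative. If these signs cooperated the wrong way the identity would not close off the genus, and one would need an independent spectral argument to rule out higher-genus competitors. Verifying the signs---and confirming that the Euclidean hypothesis genuinely kills every ambient-curvature contribution to $a_2$---is the technical heart of the proof; everything else is Gauss--Bonnet combined with the classical umbilicity/Willmore characterization of the round sphere.
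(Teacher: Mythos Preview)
Your approach has a genuine gap at exactly the point you flag. From the explicit computation in \cite{PoltSher}, in the Euclidean case the coefficient $a_2$ takes the form
\[
a_2=\frac{1}{16\pi}\int_M H^2\,dA+\frac{1}{48\pi}\int_M K\,dA,
\]
the second term being the one that Gauss--Bonnet converts to $\tfrac{1}{12}$ when $M$ is simply--connected. In your notation this means $A=1/(16\pi)>0$ and $B=1/(48\pi)>0$: both coefficients are positive, not of opposite sign. Your rewritten identity
\[
A\Bigl(\int_M H^2\,dA-4\pi\Bigr)=2\pi B\bigl(2-\chi(M)\bigr)
\]
then has both sides nonnegative, which is perfectly consistent and forces nothing. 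A priori one could have $\chi(M)<2$ balanced by $\int_M H^2>4\pi$, and the three heat invariants $a_0,a_1,a_2$ by themselves do not rule this out.

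The paper closes off the genus by a completely different mechanism before ever invoking $a_2$: since $\Omega$ is Steklov-isospectral to a ball, the multiplicities of the distinct eigenvalues grow linearly in $k$, and an adaptation of a microlocal result of Zelditch on multiplicities forces $M$ to be a Zoll surface, hence simply--connected. Only after $\chi(M)=2$ is in hand does the $a_2$-identity yield $\int_M H^2=4\pi$, and then your Cauchy--Schwarz plus Alexandrov endgame finishes the proof exactly as you describe. So the last paragraph of your argument is correct; what is missing is that the genus obstruction cannot be removed by sign-chasing in the heat coefficients and requires the multiplicity-to-Zoll step instead.
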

This result  was obtained  in~\cite{PoltSher}, and  we sketch its proof below. First,  let us  show that $M$ is simply--connected. We use an adaptation of a theorem of  Zelditch on
multiplicities~\cite{Zelditch} proved using microlocal analysis. 
 Namely,  since $\Omega$ is Steklov isospectral to a ball,  the multiplicities of its Steklov eigenvalues grow as $m_k=C k+\oo(1)$,
where  $C>0$ is some constant and $m_k$ is the multiplicity of the
$k$-th {\it distinct} eigenvalue (cf. Example~\ref{balls}).
Then one deduces that $M$ is a Zoll surface (that is, all geodesics on
$M$ are periodic with a common period), and hence it is simply--connected~\cite{Besse}. 

Therefore, the following
formula holds for the coefficient $a_2$ in the Steklov heat trace
asymptotics~\eqref{heatexp} on $\Omega$: 
\begin{gather*}
  a_2=\frac{1}{16\pi}\int_M H_1^2+\frac{1}{12}.
\end{gather*}
Here $H_1(x)$ denotes the mean curvature of $M$ at the point $x$, and  the term $\frac{1}{12}$ is obtained from the Gauss--Bonnet theorem using the fact
that $M$ is simply--connected. We have then:  $\int_M H_1^2=\int_{S_r} H_1^2$, where $S_r=\partial B_r$. 

On the other hand, it follows from \eqref{Weylaw} and Corollary
\ref{cor2} that  $\vol(M)$ and $\int_M H_1$ are Steklov spectral invariants. Therefore,
$$\area(M)=\area(S_r) ,\,\,\,\int_M H_1=\int_{S_r} H_1.$$
Hence
\begin{equation*}
\sqrt{\area(M)}\left(\int_M H_1^2\right)^{1/2}-\left|\int_M H_1\right|=\sqrt{\area(S_r)}\left(\int_{S_r} H_1^2\right)^{1/2}-\left|\int_{S_r} 
H_1\right|=0.\end{equation*}

Since the Cauchy-Schwarz inequality becomes an equality only for
constant functions, one gets that $H_1$ must be constant on $M$.  By a
theorem of Alexandrov \cite{Alexandrov}, the only compact surfaces of
constant mean curvature embedded in $\mathbb R^3$ are round
spheres. We conclude that $M$ is itself a sphere of radius $r$ and
therefore $\Omega$ is isometric to $B_r$. This completes the proof of the proposition.

\section{Nodal geometry and multiplicity bounds}\label{section:NodalMultiplicity}
\label{nodal}
\subsection{Nodal domain count}
The study of nodal domains and nodal sets of eigenfunctions is
probably the oldest topic in geometric spectral theory, going back to
the experiments of E. Chladni with vibrating plates. The fundamental
result in the subject is Courant's nodal domain theorem which states
that the $n$-th eigenfunction of the Dirichlet boundary value problem
has at most $n$ nodal domains. The proof of this statement uses
essentially two ingredients: the variational principle and the unique
continuation for solutions of second order elliptic equations. It can
therefore be extended  essentially verbatim  to Steklov eigenfunctions (see~\cite{kuttsigi2,KarKoPo}).
\begin{theorem}
  Let $\Omega$ be a compact Riemannian manifold with boundary and
  $\phi_n$ be an eigenfunction corresponding to the $n$-th nonzero
  Steklov eigenvalue  $\sigma_n$. Then $\phi_n$ has at most $n+1$
  nodal domains. 
\end{theorem}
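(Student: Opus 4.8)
The plan is to adapt the classical proof of Courant's nodal domain theorem to the Steklov setting. The statement we must prove concerns an eigenfunction $\phi_n$ of the $n$-th nonzero Steklov eigenvalue $\sigma_n$, and the assertion is that it has at most $n+1$ nodal domains. The shift from the usual bound of $n$ (for the $n$-th Dirichlet eigenfunction) to $n+1$ here is bookkeeping: the Steklov spectrum is indexed starting from $\sigma_0 = 0$, so the $n$-th \emph{nonzero} eigenvalue $\sigma_n$ sits at position $n+1$ in the full list $0 = \sigma_0 < \sigma_1 \le \sigma_2 \le \cdots$. I will keep careful track of this indexing convention throughout, as it is where the precise constant comes from.

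First I would set up the two ingredients highlighted in the excerpt: the variational characterization \eqref{varcharsigmak} of Steklov eigenvalues via the Rayleigh quotient $R(u) = \int_\Omega |\nabla u|^2 \, dA / \int_M u^2 \, dS$, and the unique continuation property for harmonic functions (solutions of the second-order elliptic equation $\Delta u = 0$). The argument proceeds by contradiction. Suppose $\phi_n$ has at least $n+2$ nodal domains $\Omega_1, \ldots, \Omega_{n+2}$. On each $\Omega_i$ I define a test function $\psi_i$ equal to $\phi_n$ on $\Omega_i$ and zero elsewhere; these have disjoint supports, so they are linearly independent in $H^1(\Omega)$. The key computation, carried out by integration by parts together with the Steklov boundary condition $\partial_\nu \phi_n = \sigma_n \phi_n$, shows that each $\psi_i$ satisfies $R(\psi_i) = \sigma_n$, and more generally that any function in the span $E = \mathrm{span}\{\psi_1, \ldots, \psi_{n+2}\}$ has Rayleigh quotient equal to $\sigma_n$. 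The orthogonality requirement in \eqref{varcharsigmak}, namely that the subspace be orthogonal to constants on $M$, can be arranged by passing to a codimension-one subspace: among the $n+2$ functions $\psi_i$ one can find a linear combination, of dimension $n+1$, that is $L^2(M)$-orthogonal to the constants while still having all its members attain Rayleigh quotient $\sigma_n$.

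This produces an $(n+1)$-dimensional subspace $E' \subset H^1(\Omega)$, orthogonal to constants on the boundary, on which the Rayleigh quotient never exceeds $\sigma_n$. By the min-max principle \eqref{varcharsigmak} this forces $\sigma_{n+1} \le \sigma_n$, which combined with monotonicity means $\sigma_{n+1} = \sigma_n$, and in fact that the entire space $E'$ consists of Steklov eigenfunctions with eigenvalue $\sigma_n$. The contradiction comes from unique continuation: a genuine eigenfunction built from one of the $\psi_i$ would be a harmonic function that vanishes on the open set $\Omega \setminus \overline{\Omega_i}$, hence vanishes identically by unique continuation for the Laplacian, contradicting that it is a nonzero eigenfunction. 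The main obstacle, and the step requiring the most care, is this final unique-continuation argument: I must verify that a function supported on a single nodal domain which \emph{realizes} the minimum in the min-max must itself be a smooth eigenfunction across the nodal boundary (not merely a Lipschitz function matching $\phi_n$), so that the interior regularity and unique continuation for $\Delta$ apply and yield the contradiction. This is precisely the point where the second-order elliptic structure of the harmonic extension, rather than any feature special to the boundary operator, carries the proof — which is why, as the excerpt notes, Courant's argument transfers essentially verbatim.
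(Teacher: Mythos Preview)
Your overall strategy is correct and matches the paper's own remark: Courant's argument transfers because both ingredients --- the variational principle \eqref{varcharsigmak} and unique continuation for harmonic functions --- are available in the Steklov setting. The indexing discussion and the integration-by-parts computation $R(\psi_i)=\sigma_n$ are also right (note in passing that every nodal domain must meet $M$, since a harmonic function vanishing on the boundary of an interior region is identically zero there by the maximum principle, so the denominators are positive).

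There is one genuine gap, at the step where you assert that ``the entire space $E'$ consists of Steklov eigenfunctions with eigenvalue $\sigma_n$.'' Orthogonality to constants alone, together with the equality $\sigma_{n+1}=\sigma_n$, does not force elements of $E'$ to be eigenfunctions: a subspace realizing the min--max value need not lie in an eigenspace, even when the Rayleigh quotient is constant on it. What is actually required is $L^2(M)$-orthogonality to \emph{all} of $\phi_0,\ldots,\phi_{n-1}$. If $\psi$ is orthogonal to these and $R(\psi)=\sigma_n$, then on one hand $R\bigl(H(\psi|_M)\bigr)\le R(\psi)=\sigma_n$ since the harmonic extension minimizes Dirichlet energy for fixed trace, while on the other hand the eigenfunction expansion of $\psi|_M$ gives $R\bigl(H(\psi|_M)\bigr)\ge\sigma_n$; the resulting equalities force both $\psi=H(\psi|_M)$ (so $\psi$ is genuinely harmonic in $\Omega$) and $\psi|_M$ to lie in the $\sigma_n$-eigenspace of $\DN$. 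The standard way to set this up is the classical Courant device: use only $n+1$ of the $n+2$ nodal domains to build the test functions, and impose the $n$ linear constraints of orthogonality to $\phi_0,\ldots,\phi_{n-1}$. The resulting nonzero $\psi$ is then a harmonic Steklov eigenfunction vanishing on the unused open set $\Omega_{n+2}$, and unique continuation yields the contradiction.
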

Note that the Steklov spectrum starts with $\sigma_0=0$,  and
  therefore the $n$-th nonzero eigenvalue is actually the $n+1$-st Steklov
  eigenvalue.

Apart of the ``interior'' nodal domains and nodal sets of Steklov
eigenfunctions, a natural problem is to study the boundary nodal
domains and nodal sets, that is, the nodal domains and nodal sets of
the eigenfunctions of the Dirichlet-to-Neumann operator. 

The proof of Courant's theorem cannot be generalized to the Dirichlet-to-Neumann operator because it is nonlocal. The following problem therefore arises:
\begin{open}
\label{opennodal}
Let $\Omega$ be a Riemannian manifold with boundary $M$. Find an upper bound for the number of nodal domains of the $n$-th eigenfunction of the Dirichlet-to-Neumann operator on $M$.
\end{open}

For surfaces, a simple topological argument shows that the bound on
the number of interior nodal domains implies an estimate on the number
of boundary nodal domains of a Steklov eigenfunction.  In particular,
the $n$-th nontrivial Dirichlet-to-Neumann eigenfunction on the
boundary of a simply--connected planar domain has at most  $2n$  nodal
domains~\cite[Lemma 3.4]{AlessandriniMagnanini}.

\begin{figure}
  \centering
  \includegraphics[width=6cm]{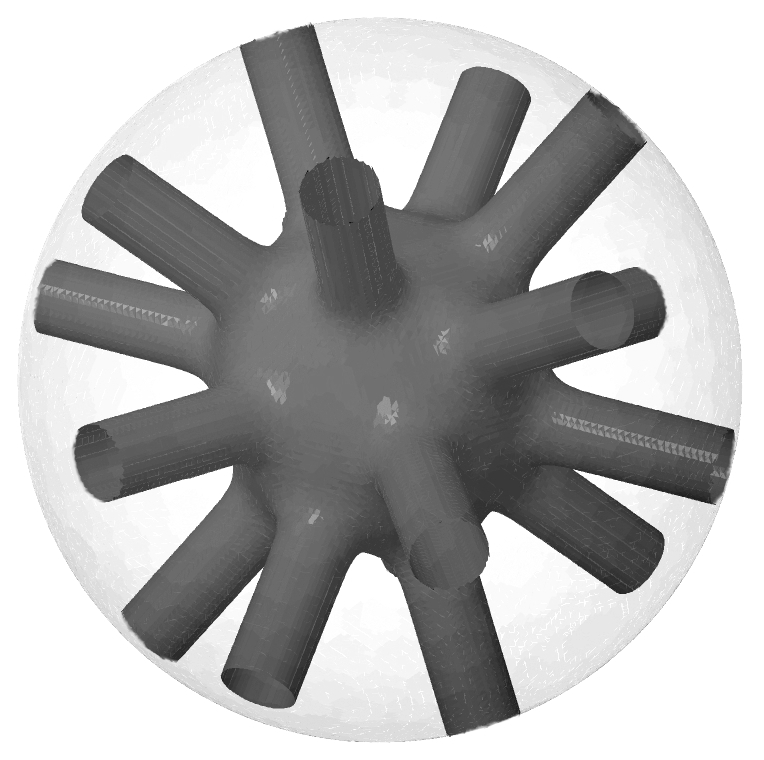}
  \caption{A surface inside a ball creating only two connected components in the interior and a large number of connected components on the boundary sphere.}
  \label{figure:urchin}
\end{figure}
In higher dimensions, the number of interior nodal domains does not
control the number of boundary nodal domains (see Figure~\ref{figure:urchin}), 
and therefore 
new ideas are needed to tackle Open Problem \ref{opennodal}.  However,
there are indications that a Courant-type  (i.e. $O(n)$) bound should
hold in this case as well. For instance, this is the case for
cylinders and Euclidean balls (see Examples~\ref{balls} and~\ref{cyl}). 

\subsection{Geometry of the nodal sets} The nodal sets of Steklov eigenfunctions, both interior and boundary, remain largely unexplored.
The basic property of the nodal sets of Laplace--Beltrami
eigenfunctions is their density on the scale of $1/{\sqrt{\lambda}}$,
where $\lambda$ is the eigenvalue  (cf.~\cite{Zelditch2}, see also Figure~\ref{figure:nodalellipse}).
This means that for any manifold
$\Omega$, there exists a constant $C$ such that for any eigenvalue
$\lambda$ large enough, the corresponding eigenfunction $\phi_\lambda$
has a zero in any geodesic ball of radius $C/\sqrt{\lambda}$.  This
motivates the following questions (see also Figure \ref{figure:nodalellipse}): 
\begin{open}
  (i) Are the nodal sets of Steklov eigenfunctions on a Riemannian
  manifold $\Omega$ dense on the scale $1/\sigma$ in $\Omega$? 
  (ii) Are the nodal sets of the Dirichlet-to-Neumann eigenfunctions
  dense on the scale $1/\sigma$ in $M=\partial \Omega$?
\end{open}

For smooth simply--connected planar domains, a positive
answer to question (ii) follows from the work of Shamma~\cite{shamma} on
asymptotic behaviour of Steklov eigenfunctions. On the other hand, the explicit representation of eigenfunctions on rectangles
implies that there exist eigenfunctions of arbitrary high order which have zeros only on one  pair of parallel sides. Therefore,  a positive answer to (ii)
may possibly  hold only under some regularity assumptions on the boundary.

\begin{figure}
  \centering
  \includegraphics[width=8cm]{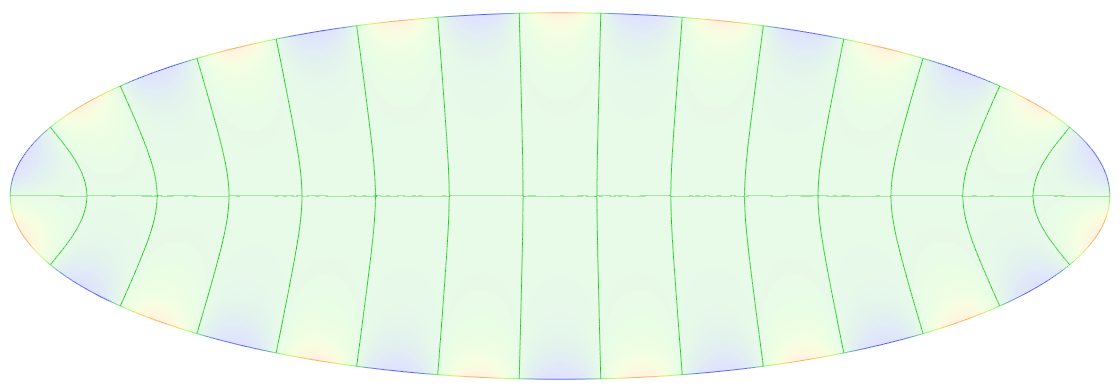}
  \caption{The nodal lines of the 30th eigenfunction on an ellipse.}
  \label{figure:nodalellipse}
\end{figure}

Another fundamental  problem in nodal geometry is to estimate the size of the nodal set. It was conjectured by S.-T. Yau that for any Riemannian manifold of dimension $n$, 
$$C_1\sqrt{\lambda} \le \mathcal{H}_{n-1}(\mathcal{N}(\phi_\lambda)) \le C_2\sqrt{\lambda},$$
where $\mathcal{H}_{n-1}(\mathcal{N}(\phi_\lambda))$ denotes the $n-1$-dimensional Hausdorff measure of the nodal set $\mathcal{N}(\phi_\lambda)$
of a Laplace-Beltrami eigenfunction $\phi_\lambda$, and the constants $C_1, C_2$ depend only on the geometry of the manifold. Similar questions can be asked in the Steklov setting:
\begin{open} 
\label{size}
Let $\Omega$ be an $n$-dimensional Riemannian manifold with boundary $M$. Let $\phi_\sigma$ be an eigenfunction of the Steklov problem on $\Omega$ corresponding to the eigenvalue $\sigma$ and let $u_\sigma=\phi_\sigma|_M$ be the corresponding eigenfunction of the Dirichlet-to-Neumann operator on $M$. Show that

\smallskip

(i)  $C_1\sigma \le \mathcal{H}_{n-1}(\mathcal{N}(\phi_\sigma)) \le C_2\sigma,$

\smallskip

(ii) $C_1' \sigma \le \mathcal{H}_{n-2}(\mathcal{N}(u_\sigma)) \le C_2'\sigma,$

\smallskip

\noindent where the constants $C_1,C_2, C_1', C_2'$ depend only on the manifold.
\end{open}

Some partial results on this problem are known.  In particular, the upper bound  in (ii) was
conjectured by~\cite{BellovaLin} and proved in~\cite{Zelditch2}
for real analytic manifolds with real analytic boundary.  A lower bound on the size of the nodal set $\mathcal{N}(u_\sigma)$   for smooth Riemannian manifolds   (though weaker than the one conjectured in (ii) in dimensions $\ge 3$)
was recently obtained in~\cite{WangZhu} using an adaptation of the approach of~\cite{SoggeZelditch} to nonlocal operators.

The upper bound in (i) is related to the question of  estimating the size of the zero set of a harmonic function in terms of its frequency (see \cite{HanLin}).
In \cite{PoltSherToth}, this approach is combined with the methods of  potential theory and complex analysis in order to obtain both upper and lower bounds in (i) for simply--connected analytic surfaces.
 Let us also note that the Steklov eigenfunctions decay rapidly away
 from the boundary~\cite{HislopLutzer}, and therefore the problem of
 understanding the properties of the nodal set in the interior is
 somewhat analogous to the study of the zero sets of Schr\"odinger
 eigenfunctions in the ``forbidden regions'' (see~\cite{HaZelZhou}).

\subsection{Multiplicity bounds for Steklov eigenvalues}
In two dimensions, the estimate on the number of nodal domains allows
to control the eigenvalue multiplicities (see \cite{Besson, Cheng}).
The argument roughly goes as follows:
if the multiplicity of an eigenvalue is high, one can construct a
corresponding  eigenfunction with a high enough vanishing order at a
certain point of a surface.  In the neighbourhood of this point the
eigenfunction looks like a harmonic polynomial, and therefore the
vanishing order together with the topology of a surface yield a lower
bound on the number of nodal domains. To avoid a contradiction with
Courant's theorem, one deduces a bound on the vanishing order, and
hence on the multiplicity. 

This general scheme was originally applied to Laplace-Beltrami
eigenvalues, but it can be also adapted to prove multiplicity bounds for Steklov eigenvalues.  Interestingly enough, one can obtain estimates of two kinds. 
Recall that the Euler characteristic $\chi$ of an orientable surface of genus
$\gamma$ with $l$ boundary components equals  $2-2\gamma-l$, and of a non-orientable one is equal to $2-\gamma-l$.
Putting together the results of~\cite{KarKoPo,
  jammes2, jammes3, fraschoen2} we get the following bounds:
\begin{theorem}
  Let $\Sigma$ be a compact surface of Euler characteristic $\chi$
  with $l$ boundary components. Then the multiplicity $m_k(\Sigma)$
  for any $k \ge 1$ satisfies the following inequalities: 
  \begin{equation}
    \label{mult1}
    m_k(\Sigma) \le 2k-2\chi-2l+5,
  \end{equation}
  \begin{equation}
    \label{mult2}
    m_k(\Sigma) \le k-2\chi+3.
  \end{equation}
\end{theorem}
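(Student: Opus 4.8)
The plan is to follow the Cheng--Besson scheme, adapted to the Steklov setting, in which one reduces the multiplicity bound to a bound on the order of vanishing of eigenfunctions and then controls the latter through nodal geometry. Fix an eigenvalue $\sigma$ with eigenspace $E$ of dimension $m=m_k(\Sigma)$; every $u\in E$ is harmonic in the interior of $\Sigma$. The first observation is purely linear-algebraic: prescribing that $u$ vanish to order $t+1$ at a fixed interior point $p$ amounts to killing the homogeneous harmonic parts of degrees $0,1,\dots,t$, which is $2t+1$ linear conditions on $E$ (one in degree $0$, two in each positive degree, since a harmonic function is determined near $p$ by its sequence of homogeneous harmonic parts). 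Hence if $T_{\mathrm{int}}$ denotes the largest order to which an element of $E$ vanishes at an interior point, then $m\le 2T_{\mathrm{int}}+1$; otherwise a nonzero eigenfunction vanishing to order $T_{\mathrm{int}}+1$ would exist. At a boundary point the Steklov condition $\partial_\nu u=\sigma u$ ties the normal derivative to the trace, so the harmonic extension is governed by the one-dimensional boundary data alone and the relevant jet space is linear in $t$ with coefficient one; this yields $m\le T_{\mathrm{bdy}}+O(1)$. These two reductions are the source of the factor-of-two discrepancy between the coefficient $2k$ in \eqref{mult1} and the coefficient $k$ in \eqref{mult2}.

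The heart of the matter is then to bound $T_{\mathrm{int}}$ and $T_{\mathrm{bdy}}$ linearly in $k$. Here I would invoke the local structure theory of Bers and Cheng: a harmonic function vanishing to order $\ell$ at an interior point is asymptotically a harmonic polynomial of degree $\ell$, so exactly $2\ell$ nodal arcs emanate from the point, equally spaced as for $\mathrm{Re}(z^\ell)$, while at a boundary zero the order of vanishing of the trace controls the number of arcs entering $\Sigma$. Viewing the nodal set together with $\partial\Sigma$ as a graph on $\Sigma$ and feeding the high-degree vertex into Euler's relation $V-E+F=\chi$, one obtains a lower bound for the number of nodal domains that grows with the vanishing order and involves $\chi$ and the number $l$ of boundary components. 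Comparing this with the Courant-type nodal domain bound for the $k$-th Steklov eigenvalue forces the vanishing order to be at most linear in $k$, up to topological corrections. Substituting into the reductions of the previous paragraph gives \eqref{mult1} from the interior estimate and \eqref{mult2} from the boundary estimate; matching the explicit constants $-2l+5$ and $-2\chi+3$ is a bookkeeping exercise in the Euler-characteristic count. I would stress that obtaining a genuinely \emph{linear} (rather than merely quadratic) dependence on $k$ is the crucial quantitative input, and it relies on the refined nodal analysis of the cited works rather than on Courant's theorem alone.

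To cover the non-orientable case I would pass to the orientation double cover $\widetilde\Sigma\to\Sigma$, which satisfies $\chi(\widetilde\Sigma)=2\chi(\Sigma)$, lift eigenfunctions equivariantly, decompose into the even and odd parts under the deck involution, and track the $\mathbb{Z}_2$-action when counting nodal domains, so that the bounds descend to $\Sigma$ with $\chi$ entering uniformly. Along the way one must check that the base point $p$ can be taken generic, that $u$ vanishes to finite order (strong unique continuation for harmonic functions), and that the Courant bound is used in its correctly shifted form, recalling that $\sigma_0=0$ so that the $k$-th nonzero eigenvalue occupies position $k+1$.

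The main obstacle will be the boundary nodal analysis. Unlike the closed Laplace setting of Cheng and Besson, a Steklov eigenfunction near a boundary zero is constrained simultaneously by interior harmonicity and by the Robin-type condition $\partial_\nu u=\sigma u$, and one must establish both that such zeros have finite order and that the number of emanating interior nodal arcs is governed by the vanishing order of the boundary trace. Getting this local picture right, and then assembling the correct CW-decomposition of a surface with boundary — with boundary arcs counted as edges and boundary zeros as vertices — is precisely the delicate point that distinguishes the Steklov multiplicity bounds from their closed-manifold prototypes and explains why the two inequalities require the combined input of the works being cited.
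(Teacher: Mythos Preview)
Your outline follows the same Cheng--Besson scheme that the paper itself sketches: force high vanishing order from high multiplicity, use the local harmonic-polynomial structure to produce many nodal arcs, feed these into an Euler-characteristic count to get a lower bound on the number of nodal domains, and play this against the Courant-type bound for Steklov eigenfunctions. The paper does not give a self-contained proof either; it attributes the two inequalities to \cite{KarKoPo, jammes2, jammes3, fraschoen2} and only records the heuristic you reproduce, so on the level of strategy you are aligned.

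Two small points deserve correction. First, your remark that ``obtaining a genuinely linear (rather than merely quadratic) dependence on $k$'' requires refined input beyond Courant's theorem is misleading: the standard argument already yields a linear bound, since the vanishing order produces a number of nodal domains that is linear in that order, and Courant caps nodal domains linearly in $k$. No quadratic loss ever appears. Second, handling the non-orientable case by passing to the orientation double cover is unnecessary and in fact risks spoiling the constants: the Euler-characteristic/nodal-graph count works directly on a non-orientable surface with boundary, and this is how the cited papers proceed. Your identification of \eqref{mult1} with the interior-point reduction and \eqref{mult2} with the boundary-point reduction is the right way to read the difference in the leading coefficients, but be aware that the precise additive constants (and in particular why $l$ disappears from \eqref{mult2}) are exactly what the cited works pin down, so calling this ``bookkeeping'' undersells the actual content.
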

Note that the right-hand side of \eqref{mult1} depends only on  the index of the eigenvalue $k$ and on the genus $\gamma$ of the surface, while
the right-hand side of \eqref{mult2} depends also on the number of boundary components.
Inequality \eqref{mult2} in this form was proved in \cite{jammes3}. In
particular, it is sharp for the first eigenvalue of the  disk
($\chi=2$, $l=1$, the maximal multiplicity is two) and of the M\"obius
band ($\chi=0$, $l=1$, the maximal multiplicity is four). Inequality \ref{mult1} is sharp for the annulus ($\chi=0$, $l=2$, the
maximal multiplicity is three and attained  by the critical catenoid,
see Theorem \ref{thm:fraschoencritcat}).

While these bounds are sharp in some cases, they are far from optimal
for large~$k$. In fact, the following result is an immediate corollary
of Theorem \ref{main:GPPS}.
\begin{corollary} \cite{GPPS}
  For any smooth compact Riemannian surface $\Omega$ with $l$ boundary
  components, there is a constant $N$ depending on the metric on
  $\Omega$ such that for $j>N$, the multiplicity of $\sigma_j$ is at
  most $2l$.
\end{corollary}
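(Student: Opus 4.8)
The plan is to read off the multiplicity bound directly from the sharp asymptotic formula \eqref{formula:GPPSmain} of Theorem~\ref{main:GPPS}. Recall that this theorem provides a reference sequence $S(R)$, built from the lengths of the $l$ boundary components via $R=\{2\pi/\ell(M_1),\dots,2\pi/\ell(M_l)\}$, and asserts $\sigma_j = S(R)_j + \oo(j^{-\infty})$. The key structural fact is that each value $\alpha_i = 2\pi/\ell(M_i)$ contributes \emph{two} copies of the arithmetic progression $\alpha_i\mathbb{N}$ to the multiset defining $S(R)$. Thus a given real number $t$ appears in $S(R)$ with multiplicity equal to twice the number of pairs $(i,m)$ with $m\in\mathbb{N}$ and $m\alpha_i = t$, summed over the $l$ components.

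First I would bound the multiplicity of an entry of the reference sequence $S(R)$ itself. For a fixed value $t>0$, the multiplicity in $S(R)$ is $2\cdot\#\{(i,m): 1\le i\le l,\ m\alpha_i = t\}$. Since for each index $i$ there is at most one integer $m$ with $m\alpha_i = t$, this count is at most $l$, so the multiplicity of any positive value in $S(R)$ is at most $2l$. This is the combinatorial heart of the statement and requires no analysis.

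The second step is to transfer this bound from $S(R)$ to the actual Steklov spectrum. Here the super-polynomial decay of the error term in \eqref{formula:GPPSmain} does the work. Distinct values appearing in $S(R)$ are separated: consecutive \emph{distinct} values of $S(R)$ are bounded below in gap size away from any accumulation (the progressions $\alpha_i\mathbb{N}$ are discrete), and more importantly, since $\sigma_j - S(R)_j \to 0$ faster than any power of $j$, for all sufficiently large $j$ the perturbation is far smaller than the minimal gap between distinct nearby values of $S(R)$. Consequently, for $j$ beyond some threshold $N$ depending on the metric, a cluster of equal Steklov eigenvalues can only arise from a cluster of equal entries of $S(R)$, whose size we have just bounded by $2l$. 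I would phrase this as: choose $N$ large enough that the $\oo(j^{-\infty})$ term is smaller than half the smallest gap between distinct values of $S(R)$ occurring past index $N$; then any coincidence $\sigma_j = \sigma_{j'}$ for $j,j'>N$ forces $S(R)_j = S(R)_{j'}$.

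The main obstacle, and the only genuinely subtle point, is controlling the gaps between distinct values of $S(R)$ near infinity, since when the ratios $\alpha_i/\alpha_{i'}$ are irrational the entries of $S(R)$ can be arbitrarily close together without coinciding. This is precisely the phenomenon exploited in Theorem~\ref{xx} via Dirichlet's simultaneous approximation theorem. However, for the multiplicity statement one does not need a uniform gap: it suffices that the error term decays faster than the (possibly shrinking) relevant gaps, and because the error is $\oo(j^{-\infty})$ while the entries $S(R)_j$ grow linearly in $j$, a careful comparison shows the error is eventually negligible relative to the resolution needed to separate genuinely distinct values at each fixed scale. I would therefore argue locally at each scale rather than seeking a global uniform gap, which sidesteps the number-theoretic difficulty and yields the stated constant $2l$ for all $j>N$.
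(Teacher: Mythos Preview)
Your overall strategy—deduce the multiplicity bound from Theorem~\ref{main:GPPS}—is exactly what the paper intends; it presents the corollary as an immediate consequence of \eqref{formula:GPPSmain} without further argument. Your Step~1 (each positive value occurs in $S(R)$ with multiplicity at most $2l$) is correct.

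Your Step~2, however, has a genuine gap. You try to show that $\sigma_j=\sigma_{j'}$ for $j,j'>N$ forces $S(R)_j=S(R)_{j'}$, and for this you need the gaps between \emph{distinct consecutive} values of $S(R)$ to eventually dominate the $\oo(j^{-\infty})$ error. But, as you yourself observe, these gaps can be arbitrarily small when the $\alpha_i$ are rationally independent. Worse, if some ratio $\alpha_i/\alpha_{i'}$ is a Liouville number, there are infinitely many indices at which the gap between neighbouring distinct values of $S(R)$ is smaller than $j^{-K}$ for every fixed $K$. The hypothesis $\sigma_j-S(R)_j=\oo(j^{-\infty})$ only says the error is $O(j^{-K})$ for each $K$; it does not guarantee super-exponential decay, so the error need not beat such Liouville-type gaps. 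Your closing ``argue locally at each scale'' does not resolve this.

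The fix is to bypass gaps between distinct values entirely. Past the initial $l$ zeros, $S(R)$ is the sorted merge of $2l$ arithmetic progressions, each with step at least $\delta:=\min_i\alpha_i=2\pi/\max_i\ell(M_i)>0$. By pigeonhole, among any $2l+1$ consecutive entries $S(R)_j,\dots,S(R)_{j+2l}$ two come from the \emph{same} progression, and hence
\[
S(R)_{j+2l}-S(R)_j \;\ge\; \delta \qquad\text{for all } j\ge l.
\]
Now if some $\sigma_j$ had multiplicity at least $2l+1$, then $\sigma_{j+2l}=\sigma_j$, so
\[
S(R)_{j+2l}-S(R)_j=(S(R)_{j+2l}-\sigma_{j+2l})-(S(R)_j-\sigma_j)=\oo(j^{-\infty}),
\]
contradicting the uniform lower bound $\delta$ once $j$ exceeds some $N$ depending on the metric. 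This is the ``immediate'' argument the paper has in mind, and it shows that the Diophantine difficulty you flagged is a red herring: one never needs the gap between adjacent distinct values, only the gap across a window of $2l$ indices, which is bounded below by a fixed constant.
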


\begin{remark}
  The multiplicity of the first nonzero eigenvalue $\sigma_1$ has been
  linked to the relative chromatic number of the corresponding surface with boundary  in~\cite{jammes3}.
\end{remark}

For manifolds of dimension $n\ge 3$, no general multiplicity bounds
for Steklov eigenvalues are available. Moreover, given a Riemannian
manifold $\Omega$ of dimension $n\ge 3$ and  any non-decreasing
sequence of $N$ positive numbers, one can find a Riemannian metric $
g$ in  a given conformal class, such that this sequence  coincides
with the first $n$ nonzero Steklov eigenvalues of $(M,g)$
\cite{jammes1}.
\begin{theorem}\label{thm:prescriptionJammes}
  Let $\Omega$ be a compact manifold with boundary. Let $n$ be a
  positive integer and let
  $0=s_0< s_1\leq\cdots\leq s_n$ be a finite
  sequence. Then there exists a Riemannian metric $g$ on $\Omega$ such
  that $\sigma_j=s_j$ for $j=0,\cdots,n$.
\end{theorem}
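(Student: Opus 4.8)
The plan is to reduce the prescription to a finite-dimensional problem modelled on a weighted graph and then to realize this model inside the fixed manifold by a conformal deformation concentrated in a small region. As in the discussion preceding the statement I take $d:=\dim\Omega\geq 3$, and I use throughout the variational characterization~\eqref{varcharsigmak} together with the mechanism behind~\eqref{collapse}: the low Steklov spectrum is governed by where the boundary mass is concentrated and by how costly it is to pass harmonically from one mass region to another. First I would fix a background metric $g_0$, choose $n+1$ pairwise disjoint ``mass regions'' $A_0,\dots,A_n\subset M$, each contained in a thin collar neighbourhood in $\Omega$, and deform conformally by $g=e^{2\phi}g_0$. Under such a change the boundary measure scales by $e^{(d-1)\phi}$ while the Dirichlet integrand scales by $e^{(d-2)\phi}$; since $d\geq 3$ these exponents differ, so by taking $\phi$ large on each $A_i$ but cutting it off sharply into the interior one can prescribe the boundary masses $\mu_i:=\vol(A_i,g)$ essentially independently of the bulk geometry. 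The effective ``conductances'' $c_{ij}$ between the regions — the capacities governing the cost of a harmonic transition from $A_i$ to $A_j$ across $\Omega$ — are in turn adjusted by a conformal change in the interior. The complement $M\setminus\bigcup_i A_i$ is given negligible boundary measure, so that it carries essentially no mass and traps no low-energy mode.

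The effect of this construction is that the first $n$ nonzero Steklov eigenvalues of $g$ should converge, as the concentration sharpens, to the eigenvalues of the generalized problem $Lx=\sigma Dx$, where $D=\operatorname{diag}(\mu_0,\dots,\mu_n)$ and $L$ is the weighted graph Laplacian with conductances $c_{ij}$. Indeed, a function equal to the constant $x_i$ on $A_i$ and harmonic in the bulk between the regions has Rayleigh quotient close to
\begin{equation*}
\frac{\sum_{i<j}c_{ij}(x_i-x_j)^2}{\sum_i \mu_i x_i^2},
\end{equation*}
and conversely any eigenfunction with small Rayleigh quotient is forced, by a Poincaré inequality on the heavy regions, to be nearly constant on each $A_i$ and nearly harmonic in between.

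It then remains to verify the \emph{realization lemma}: as the conductances $c_{ij}$ and masses $\mu_i$ range over admissible values, the nonzero eigenvalues $0<s_1\leq\cdots\leq s_n$ of $(L,D)$ sweep out every prescribed sequence. This is a finite-dimensional statement with $\binom{n+1}{2}$ conductances and $n+1$ masses available to match only $n$ numbers, and it can be handled as in Colin de Verdière's realization of graph spectra, by a dimension count combined with an openness argument. Matching the $s_j$ exactly, rather than approximately, then follows from a continuity/degree argument, since the map from the construction parameters to $(\sigma_1,\dots,\sigma_n)$ is continuous and, by the realization lemma, its image contains a neighbourhood of the target.

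The main obstacle will be making the passage to the graph model quantitative. Upper bounds for $\sigma_j$ are immediate from~\eqref{varcharsigmak}, using the discrete eigenvectors of $(L,D)$ extended to $\Omega$ as the locally constant and harmonic test functions above. The reverse inequalities are the delicate point: one must rule out spurious small eigenvalues coming from the light complement $M\setminus\bigcup_i A_i$, and show that every genuinely low-energy eigenfunction is close to the finite-dimensional model. This requires Poincaré-type estimates on each heavy region, control of the Dirichlet energy stored in the transition channels, and a uniform lower bound for the Steklov spectrum of the light part — the most technical ingredient of the argument.
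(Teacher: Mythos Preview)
The paper is a survey and does not give its own proof of this theorem; it is simply quoted from Jammes~\cite{jammes1}, with the remark that the analogous result for Laplace--Beltrami eigenvalues is due to Colin de Verdi\`ere~\cite{CdV87}. So there is no ``paper's proof'' to compare against beyond that attribution.

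That said, your outline is in exactly the spirit of the cited work. Jammes' argument does proceed by adapting Colin de Verdi\`ere's method: one stays in a fixed conformal class (this is the stronger form stated in the text just before the theorem), exploits the fact that in dimension $d\geq 3$ the boundary measure and the Dirichlet form scale with different powers of the conformal factor, and drives the low Steklov spectrum to that of a finite combinatorial model. A perturbation argument then turns the approximate prescription into an exact one. Your identification of the main technical hurdle --- the matching lower bounds that rule out spurious small eigenvalues carried by the ``light'' part of the boundary --- is accurate, and your sketch of why the upper bounds are easy (plug discrete eigenvectors into~\eqref{varcharsigmak}) is correct.

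One small caution: your proposal treats both the boundary masses $\mu_i$ and the conductances $c_{ij}$ as independently adjustable, and then appeals to a ``dimension count plus openness'' argument for the realization step. In practice the cleanest way to make this rigorous, and the way it is done in the Colin de Verdi\`ere/Jammes framework, is not a global degree argument on a large parameter space but rather a local inverse-function/transversality argument near a carefully chosen degenerate configuration where the limiting spectrum is explicitly computable. You should make sure your write-up isolates such a base point, since the generic surjectivity of the parameters-to-spectrum map is otherwise not obvious.
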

 For Laplace-Beltrami eigenvalues, a similar result was
obtained in \cite{CdV87}. It is plausible that multiplicity bounds for
Steklov eigenvalues in higher dimensions could be obtained under
certain geometric assumptions, such as curvature constraints.
\subsection*{Acknowledgements}
The authors would like to thank Brian Davies for inviting them to
write this survey. The project started in 2012 at the conference on 
\emph{Geometric Aspects of Spectral Theory} 
at the Mathematical Research Institute in Oberwolfach,
and its hospitality is  greatly appreciated.
We are grateful to Mikhail Karpukhin and David Sher for helpful
remarks on the preliminary version of the paper. We are also thankful to
Dorin Bucur, Fedor Nazarov, Alexander Strohmaier and John Toth for useful discussions, 
as well as to Bartek Siudeja for letting us use his FEniCS
eigenvalues computation code.

\bibliographystyle{plain}
\bibliography{biblioJST1}
\end{document}